\newtheorem{theorem}{Theorem}
\newtheorem{lemma}{Lemma}
\newtheorem{proposition}{Proposition}
\newtheorem{definition}{Definition}
\newtheorem{corollary}{Corollary}
\renewcommand{\P}{{\mathcal{P}}}
\renewcommand{\O}{{\mathcal{O}}}
\newcommand{\C}{{\mathcal{C}}}
\renewcommand{\S}{{\mathcal{S}}}
\newcommand{\U}{{\mathcal{U}}}
\newcommand{\E}{{\mathcal{E}}}
\newcommand{\Z}{{\mathcal{Z}}}
\newcommand{\ts}{{\tilde{s}}}
\newcommand{\tc}{{\tilde{c}}}
\newcommand{\tS}{{\tilde{S}}}
\newcommand{\tC}{{\tilde{C}}}
\newcommand{\uM}{{\underline{M}}}
\newcommand{\oM}{{\overline{M}}}
\newcommand{\uN}{{\underline{N}}}
\newcommand{\oN}{{\overline{N}}}
\newcommand{\fz}{{\mathfrak{z}}}
\newcommand{\fy}{{\mathfrak{y}}}
\newcommand{\fp}{{\mathfrak{p}}}
\newcommand{\fP}{{\mathfrak{P}}}
\newcommand{\fw}{{\mathfrak{w}}}
\newcommand{\fs}{{\mathfrak{s}}}
\newcommand{\bN}{{\mathbb{N}}}
\newcommand{\bZ}{{\mathbb{Z}}}
\newcommand{\bI}{{\mathbb{I}}}
\renewcommand{\o}{{^o\! }}
\newcommand{\sZ}{{\sf{Z}}}
\begin{document}

\title{Reversibility and further properties of FCFS infinite bipartite matching  }

\author{Ivo {\sc Adan}\thanks{Department of Industrial Engineering \& Innovation Sciences,
Eindhoven University of Technology, P.O. Box 513,
5600 MB Eindhoven, the Netherlands, E-mail: {\tt i.j.b.f.adan@tue.nl}.
Research supported in part by Dutch stochastics cluster STAR.} 
\and 
Ana {\sc Bu\v si\'c}\thanks{INRIA/ENS, 23, avenue d'Italie, CS 81321, 75214 Paris Cedex 13, France. 
E-mail: {\tt ana.busic@inria.fr}.} 
\and Jean {\sc Mairesse}
\thanks{Sorbonne Universit\'es, UPMC Univ Paris 06, CNRS, LIP6, 4 place Jussieu, 75252 Paris Cedex 05, France. 
E-mail: {\tt jean.mairesse@lip6.fr}} 
\and Gideon {\sc Weiss}\thanks{
Department of Statistics,
The University of Haifa,
Mount Carmel 31905, Israel. E-mail: {\tt 
gweiss@stat.haifa.ac.il}. 
Research supported in part by
Israel Science Foundation Grants 711/09 and 286/13, and by Dutch stochastics cluster STAR.}}

\date{\today}

\maketitle

\begin{abstract}
The model of FCFS infinite bipartite matching was introduced in Caldentey, Kaplan, $\&$ Weiss {\em Adv. Appl. Probab.}, 2009.  In this model, there is a sequence of items that are chosen i.i.d. from a finite set $\C$ and an independent sequence of items that are chosen i.i.d. from  a finite set $\S$, and a bipartite compatibility graph $G$ between $\C$ and $\S$.  Items of the two sequences are matched according to the compatibility graph, and the matching is FCFS, meaning that each item in the one sequence is matched to the earliest compatible unmatched item in the other sequence.  In Adan $\&$ Weiss, {\em Operations Research}, 2012, a Markov chain associated with the matching was analyzed, a condition for stability was derived, and a product form stationary distribution was obtained. 
In the current paper, we present several new results that unveil the fundamental structure of the model.  First, we provide a pathwise Loynes' type construction which enables to prove the existence of a unique matching for the model defined over all the integers. Second, we prove that  the model is dynamically reversible: we define an exchange transformation in which we interchange the positions of each matched pair,
and show that the items in the resulting permuted  sequences are again independent and i.i.d., and the matching between them is FCFS in  reversed time.  
Third, we obtain product form stationary distributions of several new Markov chains associated with the model. As a by product, we compute useful performance measures, for instance the link lengths between matched items. 

\medskip

\noindent
{\em Keywords:}  
infinite bipartite matching;  first come first served policy; Loynes' type construction; dynamic reversibility; product form.
\medskip

\noindent
{\em 2000 Mathematics Subject Classification:} Primary 60J10;  Secondary 60K25; 68M20; 90B22.
\end{abstract}

\section{Introduction}
\label{sec.introduction}

%

Let us define the FCFS infinite bipartite matching model formally, following  \cite{caldentey-kaplan-weiss:09,adan-weiss:11}. 
\begin{definition}\label{de-ckw}
Consider two finite sets of {\em types} $\C=\{c_1,\ldots,c_I\}$ and $\S=\{s_1,\ldots,s_J\}$  and a bipartite {\em compatibility} graph $G=(\C,\S,\E)$, with $\E\subset \C\times \S$, assumed to be connected. 
Consider two independent random sequences $(c^m)_{m\in T}$ and $(s^n)_{n\in T}$ which are chosen respectively i.i.d. from $\C$ with probabilities $\alpha=\{\alpha_{c_1},\ldots,\alpha_{c_I}\}$, and  i.i.d. from $\S$ with probabilities $\beta=\{\beta_{s_1},\ldots,\beta_{s_J}\}$. The parameter set $T$ can be finite, for instance $T=\bI_N=0,1,2,\ldots,N$,   one sided infinite,  $T=\bN=0,1,2,3,\ldots$, or  two sided infinite, $T=\bZ=\ldots,-2,-1,0,1,2,3\ldots$.  We assume that $\alpha_i\neq 0$ for all $i$ and $\beta_j\neq 0$ for all $j$. The triple $(G,\alpha,\beta)$ defines an {\em infinite bipartite matching model}.
\end{definition}

In the compatibility graph, $(c_i,s_j)\in \E$ denotes that the types $c_i$ and $s_j$ are compatible and can be matched together.   If $(c_i,s_j) \not\in \E$, we say that $c_i$ and $s_j$ are incompatible and they cannot be matched to each other.
We  study {\em first come first served (FCFS) matchings} between the two sequences, as defined precisely in \cite{adan-weiss:11}, the exact definition is repeated in Section \ref{sec.FCFS}.    Loosely speaking it means that an item from $S$ is matched to the earliest possible compatible item from $C$, and vice versa.   
This is unambiguously defined when $T$ is finite or when $T=\bN$. In the case $T=\bZ$, defining precisely the meaning of the FCFS matching is the purpose of Section \ref{sec.loynes}. 

Figure \ref{fig.match1} illustrates  FCFS matching of two sequences, for a given compatibility graph.  Items are browsed from left to right. For instance, $c^1$ cannot be matched with $s^1$ nor $s^2$ since $c^1=c_1, s^1=s^2=s_2$ and $(c_1,s_2) \not\in \E$, but it will be matched with $s^3=s_3$ which is the earliest possible match. 

\begin{figure}[htb]
\begin{center}
\includegraphics[width=5.25in]{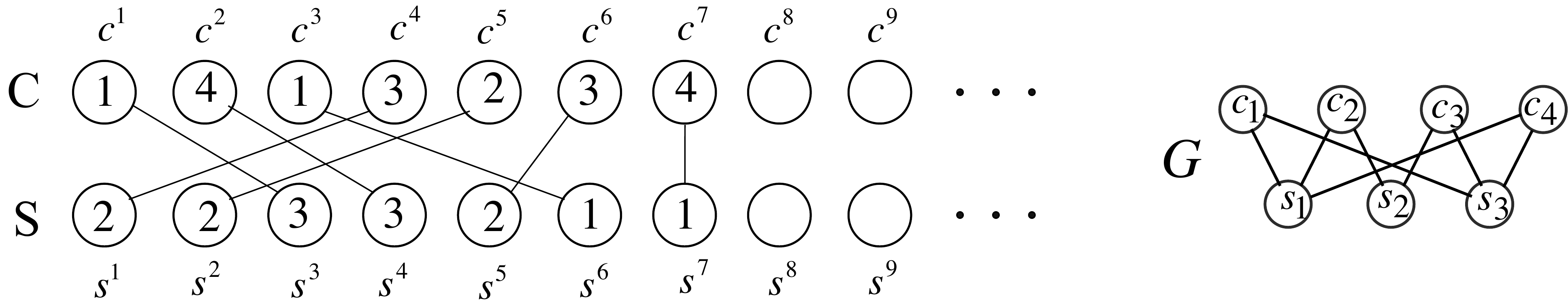}
\end{center}
\caption{FCFS matching of two sequences (left), for the compatibility graph $G$ (right)}
\label{fig.match1}
\end{figure}

{\bf Notations}:    Throughout the paper we shall use the following notations, terms, and illustrations to accompany our mathematical analysis. 
For convenience we shall refer to $s^n$ as servers, to $c^m$ as customers (we could equally well  call them, according to the context,  donors and patients, calls and agents, males and females, buy bids and sell bids), and we shall refer to $T$ as time (equivalently we could call it location, position, event).  We use $c_i,\,i=1,\ldots,I$ to denote the different types of customers, and we use $c^m$ to denote  the type of the $m$-th customer.   Similarly we use $s_j, \, j =1, \ldots, J$ to denote the types of servers, and $s^n$ denotes the type of the $n$-th server. In the accompanying figures, we will simplify the notation by writing  $c^m=i$ if $c^m=c_i$, and $s^n=j$ if $s^n=s_j$, and  we will arrange  the sequences in two lines, the top one containing the ordered customers, and the bottom one containing the ordered servers.  At a later stage, we will sometimes swap items between the top and the bottom lines in a way to be explained then.  In figures, we  put an edge between $s^n$ and $c^m$ if  they are matched, and we call this edge a {\em link} in the matching, characterized by the pair of times $(m,n)$.

\medskip

{\bf Motivations and previous works:}  In 1984, Ed Kaplan \cite{kaplan:84}  asked the following question when studying public housing in Boston:  Entitled families list those projects  where they wish to live, and when a home becomes available in a project, it is assigned by law to the longest waiting family that listed it.  The question then was what fraction of families with a specific list of preferences is actually assigned to each of the projects in their list. Kaplan called these fractions the {\em matching rates}.  The same mechanism may also  model  child adoptions, blood banks, organ transplants, and the operation of web servers and cloud computing. 

Kaplan's problem is related to the queueing model of parallel server scheduling, where there is a stream of arriving customers of several types $\C=\{c_1,\ldots,c_I\}$ which are served by several pools of parallel servers, with $n_1,\ldots,n_J$ servers of types $\S=\{s_1,\ldots,s_J\}$, and compatibility graph $G=(\C,\S,\E)$, where the policy  is FCFS, that is, whenever a server is available he attends to the longest waiting compatible customer.  This queueing model is notoriously hard to analyze, as demonstrated for instance in a paper by Foss and Chernova \cite{foss-chernova:98}, where it is proved that, with general service time distributions,  the stability of this model cannot be determined by the fluid model alone.  Motivated by Kaplan's problem, Whitt and Talreja \cite{talreja-whitt:07} studied a formal fluid model of the overloaded parallel server queueing model under FCFS policy, and derived explicit matching rates in the special cases of  bipartite graphs that are trees or  
can be decomposed as a tree of complete bipartite graphs.   

However, the parallel server queueing model may be inappropriate for the housing scenario, as apartments do not act as servers.  A more appropriate model is to assume that houses as well as families arrive in random streams, independent of each other, and, when a match is made, they depart the system.  
This led to the formulation, recalled in Definition \ref{de-ckw}, of the 
FCFS bipartite infinite matching model  by Caldentey, Kaplan and Weiss \cite{caldentey-kaplan-weiss:09}, which makes the further simplification of avoiding all consideration of timing of the independent arrival streams, and considers only the i.i.d. types of the ordered arrivals of customers and servers.   In \cite{caldentey-kaplan-weiss:09}, a necessary condition for stability was derived, and the FCFS matching rates were calculated for almost complete bipartite graphs.  The following questions were left open:  Is the condition for stability also sufficient, and can we calculate the matching rates for general bipartite graphs.  

Turning back to the parallel server queueing model, it turned out that the special case where arrivals are Poisson, service time distributions are  exponential, and the service rates  depend only on the type of server, and not on the type of customer that is served, the model under FCFS policy is tractable. Indeed, by specifying how customers are assigned to servers when there is a choice of several compatible available servers, it was shown in \cite{visschers-adan-weiss:12} that there exists a random assignment regime under which the queue lengths are  Markovian, satisfy partial balance equations, and have a product form stationary distribution.  This result was extended to a loss model in \cite{adan-hurkens-weiss:10,adan-weiss:12b}.  Admittedly, the very special form of the random assignment of customers to a choice of servers made this result of limited usefulness. But a similar product form result was later obtained in \cite{adan-weiss:14} for the parallel server queueing model with a more natural policy: the FCFS-ALIS policy where arriving customers that have a choice of several free compatible servers are assigned to the longest idle server (ALIS). 

An unexpected connection between the parallel server queueing model and the bipartite matching model was found in  \cite{adan-weiss:11} and provided a breakthrough for the bipartite matching model. 
Indeed,  it was proved in \cite{adan-weiss:11} that a Markov chain associated with the bipartite matching model, and directly inspired by the queue length process of the parallel server queueing model, enjoys the same remarkable properties as its source of inspiration: it satisfies partial balance, and has a product form stationary distribution.  As a by-product, it was obtained that the necessary conditions for stability were also sufficient, and, furthermore, if was possible to derive exact formula for the computation of the matching rates.  

Let us mention that further research on infinite bipartite  matching under policies different from FCFS, and on infinite matching in graphs that are not bipartite, are described in 
\cite{busic-gupta-mairesse:10} and \cite{mairesse-moyal:14}.   

To complete this review of previous work, let us mention another conjectural connection between the bipartite matching model and the parallel server queueing model. 
In \cite{adan-boon-weiss:13,adan-boon-weiss:14}, it is conjectured that in a parallel server queueing model, stabilized by abandonments, and operated under FCFS-ALIS, 
matching rates of the queueing system under many server scaling equal the matching rates of the FCFS infinite bipartite matching model.  This conjecture is as yet not proved, but simulations indicate that it may be correct, and that it can be used to evaluate performance and assist in design of such systems.

\medskip

{\bf New contributions of the paper}: Consider the FCFS infinite bipartite matching model. The following questions were left unanswered by the different contributions mentioned above. 
First, can one construct FCFS matching of two sequences that are defined for all the integers, $\mathbb{Z} = \ldots,-2,-1,0,1,\ldots$?   Second, partial balance usually indicates reversibility, can we show that the FCFS infinite matching model is time reversible in some sense?  Third, the Markov chain that was defined and analyzed in \cite{adan-weiss:11} is one of several Markov chains that can be associated with the FCFS infinite matching model, can we obtain stationary distributions for all of them (e.g. for the chain defined in \cite{caldentey-kaplan-weiss:09})?  We answer all these questions in the current paper.
Indeed, we uncover the fundamental structure of the model in the following three steps: 

\begin{compactitem}
\item
We derive a Loynes' scheme, which enables to get to stationarity through  sample path dynamics, and to prove the existence of a unique FCFS matching over $\mathbb{Z}$. 
\item
We define a pathwise transformation  in which we interchange the positions of the two items in a matched pair, see Figure \ref{fig.match2}, and we prove the ``dynamic reversibility''  of the model under this transformation. 

\item We construct  ``primitive'' Markov chains whose  product form stationary distributions are obtained directly from the dynamic reversibility. Using these as building blocks, we derive product form stationary distributions for multiple ``natural'' Markov chains associated with the model, and we compute various non-trivial performance measures as a by-product.
\end{compactitem}

\medskip

\begin{figure}[htb]
\begin{center}
\includegraphics[width=5.25in]{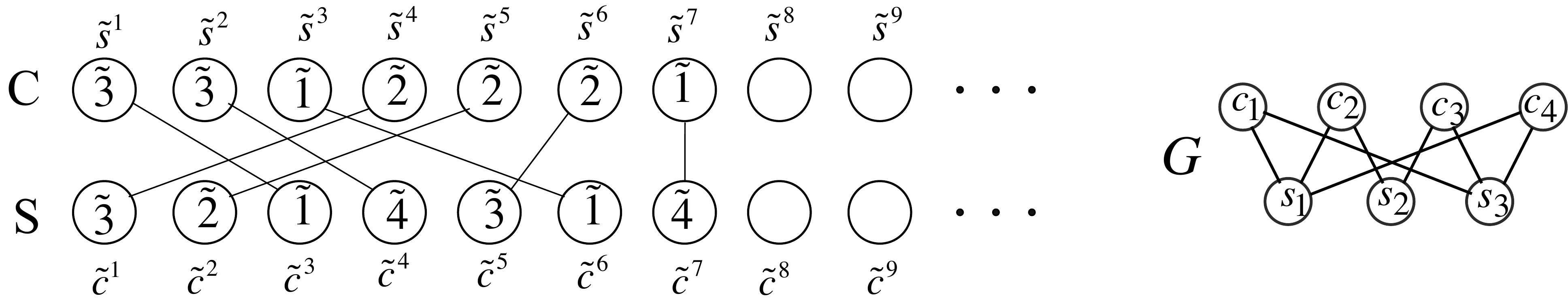}
\end{center}
\caption{The matched sequences of Figure \ref{fig.match1}, after the exchange transformation}
\label{fig.match2}
\end{figure}

{\bf Organization}: We now briefly describe the content of the sections of the paper.  
In Section \ref{sec.FCFS}, we state the definition of FCFS matching and summarize some results from \cite{adan-weiss:11}.  
These include uniqueness of the FCFS matching of two sequences over $\bN$, and two specific mechanisms to construct the  matching.  We define two Markov chains associated with these two mechanisms, which we call {\em the natural Markov chains}, and state the necessary and sufficient condition for their ergodicity, given by {\em complete resource pooling}.  

In Section \ref{sec.loynes}, we use a Loynes' type construction to obtain FCFS matching of two sequences over  $\bZ$.  We show that if complete resource pooling holds then the Loynes' construction,  matching all customers and servers in positions $n > -k$ and letting $k\to\infty$, converges almost surely to a unique FCFS matching.  This  also yields the unique stationary version of the Markov chains associated with the construction of the matching.

In Section \ref{sec.reversibility}, we discuss the {\em exchange transformation}, in which for a matched pair $c^m,s^n$, we exchange the customer and the server, so that we now have a customer in position $n$, denoted by $\tc^n$ (on the servers line) and a server in position $m$, denoted by $\ts^m$ (on the customers line),
as illustrated in Figure \ref{fig.match2}.  We show the following reversibility result:  starting from two independent i.i.d. sequences over $\bZ$ with FCFS matching between them, and performing the exchange transformation on all the links, we obtain two sequences of exchanged customers and servers and a matching between them.  It is then true that the sequences are again independent i.i.d., and the matching between them is FCFS in the reversed time direction.  

In Section \ref{sec.stationary}, we consider several  Markov chains that accompany the  construction of FCFS matching.  As suggested by the reversibility, these also have product from stationary distributions.  The stationary probabilities have an extremely simple structure, they behave like finite sequences of multi-Bernoulli trials.  We also derive an expression for the normalizing constant of the stationary distributions.  As is often the case, its computation is not easy, and we conjecture it is $\sharp P$.
We illustrate these results for the ``NN'' example that was described in \cite{caldentey-kaplan-weiss:09}, and that could not be fully analyzed at the time. 

Finally, in Section \ref{sec.perfomance}, we consider some performance measures for a stationary matching.  First, we recall the results from \cite{adan-weiss:11} on the computation of the matching rates 
$r_{c_i,s_j}$, that is, the frequency of matches of customer type $c_i$ with server type $s_j$.  Then we obtain novel results on another performance measure that is the random link length, $L_{c_i,s_j}$, given by $m-n$ for a match of $c^m$ with $s^n$, where $c^m=c_i,\,s^n=s_j$.  We obtain the stationary distribution of the link length, which is a mixture of convolutions of Geometric random variables.


\section{FCFS  bipartite matching over $\bN$}
\label{sec.FCFS}
In this section we provide a precise definition of bipartite FCFS matching, quote the result of uniqueness of the matching for $T=\bN$, describe two Markov chains to construct this unique matching, define complete resource pooling, and quote the result that these Markov chains are ergodic if and only if complete resource pooling holds.

\begin{definition}
\label{def.fcfs}
Let $c=(c^m)_{m\in T_1}$ and $s=(s^n)_{n\in T_2} $ be some fixed ordered sequences of customers and servers. 
Here $T_1$ and $T_2$  are index sets which are either finite, one-sided infinite or bi-infinite. 
\begin{compactitem}
\item[-]
A \emph{partial matching} of $c$ and $s$ is a set $A\subset T_1 \times T_2$ corresponding to 
customer-server pairs  satisfying:
\begin{eqnarray*}
&(i) & (m,n) \in A \implies (c^m,s^n)\in \E\\
& (ii) &\forall m , \ \# \{n \, : \, (m,n)\in A\} \leq 1, \ \forall n, \ \#\{m \, : \, (m,n) \in A\} \leq 1 \:. 
\end{eqnarray*}
For a partial matching $A$, we denote 
\[ A_c=\{m\,:\, \exists n, \, (m,n)\in A\},  \quad A_s=\{n\,:\, \exists m, \,(m,n)\in A\}\:.
\]
\item[-]
A partial matching $A$ is a \emph{(complete) matching}  if there are no unmatched compatible pairs left outside of $A$, that is:
\[
m\not\in A_c, \ n\not\in A_s \quad \implies \quad (c^m,s^n) \not\in \E.
\]
\item[-]
A matching $A$ is FCFS if for every $(m,n)\in A$,
\begin{eqnarray*}
&& \mbox{if $l < n,\, (c^m,s^l)\in \E$, then there exists $k<m$ such that $(k,l)\in A$ and } \\
&& \mbox{if $k < m,\, (c^k,s^n)\in \E$, then there exists $l<n$ such that $(k,l)\in A$.}
\end{eqnarray*}
Note that the condition simply checks for every match, that all earlier compatible items have been matched already, which is the FCFS property.
\item[-]
A  matching is \emph{perfect} if  $T_1=T_2$, and all customers and servers are matched.
\end{compactitem}
\end{definition}

The following Lemma was proved by induction in \cite{adan-weiss:11}.
\begin{lemma}[Adan \& Weiss, 2011 \cite{adan-weiss:11}]
\label{thm.unique}
For every finite index sets $\bI_M,\bI_N$, there exists a complete FCFS matching, and it is unique.
\end{lemma}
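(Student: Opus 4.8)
The plan is to prove both claims simultaneously by induction on the total number of items $M+N+2$, the crucial point being that the FCFS rule \emph{forces} the partner of the first customer $c^0$.

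\textbf{Forcing $c^0$.} If $c^0$ has no compatible server it cannot belong to any partial matching, by clause (i) of Definition~\ref{def.fcfs}. Otherwise set $n^\ast=\min\{n\le N:(c^0,s^n)\in\E\}$; I claim that in \emph{every} complete FCFS matching $A$ we have $(0,n^\ast)\in A$. Case analysis: if $s^{n^\ast}$ is unmatched in $A$, then since $(c^0,s^{n^\ast})\in\E$ completeness forces $c^0$ to be matched, say $(0,n)\in A$ with $n\ge n^\ast$ by minimality; were $n>n^\ast$, the first FCFS clause applied to the link $(0,n)$ would demand some customer $k<0$ matched to $s^{n^\ast}$, which is impossible, so $n=n^\ast$. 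If instead $s^{n^\ast}$ is matched, say $(k,n^\ast)\in A$ with $k\ge 0$, then were $k>0$ the second FCFS clause applied to $(k,n^\ast)$ would demand some server $l<n^\ast$ matched to $c^0$, and then by clause (i) $(c^0,s^l)\in\E$, contradicting the minimality of $n^\ast$; so $k=0$. Note this argument uses the FCFS property \emph{and} completeness together.

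\textbf{Induction.} Given a complete FCFS matching $A$ of $c,s$, delete $c^0$ (and, if $c^0$ is matched, also delete $s^{n^\ast}$ and the link $(0,n^\ast)$), and relabel the remaining items order-preservingly; call the resulting set $A'$. One verifies that $A'$ is a complete FCFS matching of the shortened sequences: completeness is clear because deleting items and a link can only destroy compatible pairs, and each instance of the FCFS clauses transports to the new labelling once one observes that (a) any original FCFS witness $(k,l)$ with $k=0$ must have $l=n^\ast$, so it does not survive as a counterexample, and (b) the witness $l=n^\ast$ is never \emph{needed} in the reduced problem because $s^{n^\ast}$ has been removed. By the induction hypothesis $A'$ is the \emph{unique} complete FCFS matching of the shortened sequences; hence $A$ is determined, which gives uniqueness. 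For existence, run this in reverse: take the unique complete FCFS matching of the shortened sequences (which exists by induction), re-insert the link $(0,n^\ast)$ (or nothing), and check by the same bookkeeping — together with the fact that the FCFS clauses for $(0,n^\ast)$ are vacuous by minimality of $n^\ast$ — that the result is a complete FCFS matching of $c,s$. Existence can alternatively be obtained by the direct greedy construction: scan $c^0,\dots,c^M$ in order, matching each customer to its earliest as-yet-unmatched compatible server. The base case, $M=0$ or $N=0$ (or simply one customer and one server), is immediate.

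\textbf{Main obstacle.} The non-mechanical step is the forcing of $c^0$'s partner, which genuinely needs the FCFS and completeness properties in tandem rather than either one alone. The part requiring the most care is the index bookkeeping when $s^{n^\ast}$ is deleted from the middle of the server sequence: one must confirm that the relabelling neither creates a new FCFS violation nor removes a required witness. Everything else — the base case and the ``complete'' clause — is routine.
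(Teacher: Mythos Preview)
Your proof is correct and follows the same approach the paper attributes to \cite{adan-weiss:11}: the paper does not reproduce the argument but simply states that the lemma ``was proved by induction,'' which is exactly what you do. Your key observation---that FCFS together with completeness forces $c^0$ to be matched to the first compatible server $s^{n^\ast}$---is the natural inductive anchor, and your case analysis for the forcing step and the bookkeeping for the reduction are sound.
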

\begin{definition}
\label{def.fair}
The sequences $(c^m)_{m\in\bN}$ and $(s^n)_{n\in \bN}$ are {\em matchable} if the number of type $c_i$ customers and the number of type $s_j$ servers is infinite for all $i=1\ldots,I$ and $j=1,\ldots,J$.   The sequences 
$(c^m)_{m\in\bZ}$ and $(s^n)_{n\in \bZ}$ are {\em matchable} if the same also holds for times $-1,-2,\ldots$.   
\end{definition}

The following theorem was proved by Adan and Weiss \cite{adan-weiss:11}, we provide a sketch of the proof, for completeness.
\begin{theorem}
\label{thm.oneside}
For any two matchable sequences $(c^m)_{m\in\bN}$ and $(s^n)_{n\in \bN}$, there exists a unique perfect FCFS matching.  The matching can be obtained in a constructive way up to arbitrary length.
\end{theorem}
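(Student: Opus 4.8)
\medskip
\noindent\textbf{Proof proposal.}
The plan is to build the matching over $\bN$ as an increasing limit of the finite FCFS matchings supplied by Lemma~\ref{thm.unique}, and then to read off perfectness from matchability and uniqueness from a first-difference argument. For $N\in\bN$, let $A_N$ be the unique complete FCFS matching of $(c^m)_{m\in\bI_N}$ and $(s^n)_{n\in\bI_N}$ (Lemma~\ref{thm.unique}). The crux, and the one step I expect to require genuine work, is the \emph{consistency of the truncations}: $A_{N'}\cap(\bI_N\times\bI_N)=A_N$ for all $N'\ge N$, so that in particular $A_N\subseteq A_{N'}$. The restriction $A_{N'}\cap(\bI_N\times\bI_N)$ is clearly a partial matching of $(c^m)_{m\in\bI_N},(s^n)_{n\in\bI_N}$, and it stays FCFS because the FCFS conditions for a link only constrain indices strictly below those of the link and hence never leave the window $\bI_N\times\bI_N$; so by the uniqueness half of Lemma~\ref{thm.unique} it is enough to show that it is \emph{complete}. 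Here I would argue by contradiction: suppose $m,n\le N$ are both unmatched in the restriction while $(c^m,s^n)\in\E$. If $c^m$ happens to be matched in $A_{N'}$, then it must be matched to a server outside the window, i.e. to $s^{n'}$ with $n'>N$, and the FCFS condition for the link $(m,n')$ taken at $l=n$ forces a $k<m$ with $(k,n)\in A_{N'}$ --- so $s^n$ is matched inside the window after all, a contradiction; the case where $s^n$ is matched in $A_{N'}$ is symmetric; and if neither is matched in $A_{N'}$, then completeness of $A_{N'}$ gives $(c^m,s^n)\notin\E$, again a contradiction. Hence the restriction is complete, so it equals $A_N$.

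With consistency in hand I would set $A_\infty:=\bigcup_N A_N$, a well-defined increasing union. It is a partial matching (nestedness forbids a customer or server from sitting in two links), it is FCFS (every link of $A_\infty$ already lies in some $A_N$, and the earlier links it demands then lie in $A_N\subseteq A_\infty$), and it is complete (if $m$ and $n$ are unmatched in $A_\infty$ they are unmatched in every $A_N$, and completeness of $A_N$ for $N\ge\max(m,n)$ gives $(c^m,s^n)\notin\E$). The identity $A_\infty\cap(\bI_N\times\bI_N)=A_N$ is exactly the claim that the matching can be produced constructively up to arbitrary length: running the finite procedure behind Lemma~\ref{thm.unique} on the first $N+1$ customers and servers already fixes every link contained in $\bI_N\times\bI_N$, and any individual link $(m,n)$ appears as soon as $N\ge\max(m,n)$.

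Perfectness is where matchability is used. Assume for contradiction that $c^m$ is never matched in $A_\infty$. Its type $c_i$ is compatible with at least one server type (the graph $G$ is connected, hence has no isolated vertex), and by matchability that type occurs infinitely often, so infinitely many servers compatible with $c^m$ appear. Whenever, in passing from $A_N$ to $A_{N+1}$ with $N+1>m$, the new server $s^{N+1}$ is compatible with $c^m$, completeness of $A_{N+1}$ forces $s^{N+1}$ to be matched, and the FCFS condition for its link forces it to be matched to a customer of index $\le m$ (if it were matched to some $c^j$ with $j>m$, the FCFS condition for $(j,N+1)$ applied at $k=m$ would put $c^m$ in a link, a contradiction); by consistency that customer stays matched forever. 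Since there are only $m+1$ customers of index $\le m$, while the number of compatible-server arrivals is infinite, after finitely many of them all such customers --- $c^m$ included --- are matched, a contradiction. The same counting argument with customers and servers interchanged handles the servers, so $A_\infty$ is perfect.

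Finally, uniqueness. Let $A$ be any perfect FCFS matching over $\bN$ with $A\ne A_\infty$, and let $m$ be the least index at which $c^m$ is matched differently in $A$ and in $A_\infty$, say $(m,n)\in A$ and $(m,n')\in A_\infty$ with $n<n'$ (the reverse inequality is handled symmetrically, exchanging the roles of $A$ and $A_\infty$). Applying the FCFS condition for $A_\infty$ to the link $(m,n')$ at $l=n$ --- legitimate because $(c^m,s^n)\in\E$ and $n<n'$ --- yields a $k<m$ with $(k,n)\in A_\infty$; by minimality of $m$, the customer $c^k$ is matched the same way in $A$, so $(k,n)\in A$, which together with $(m,n)\in A$ puts the server $s^n$ in two links --- impossible. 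Hence $A=A_\infty$.
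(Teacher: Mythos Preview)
Your proof is correct and considerably more detailed than the paper's three-sentence sketch. The paper dispatches uniqueness in one line (``follows from Lemma~\ref{thm.unique}'') and obtains existence by a direct server-by-server construction: having matched $s^1,\dots,s^n$, one matches $s^{n+1}$ to the earliest compatible unmatched customer, which exists because every customer type appears infinitely often; it then remarks that Lemma~\ref{thm.unique} forces every order of extending complete $\bI_M,\bI_N$ matchings to agree in the limit. You instead build $A_\infty$ as the increasing union of the symmetric truncations $A_N$, prove the consistency step $A_{N'}\cap(\bI_N\times\bI_N)=A_N$ explicitly, and supply a self-contained first-difference argument for uniqueness. Your route is longer but makes the mechanism visible. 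One observation worth making: your consistency argument, applied verbatim with an arbitrary perfect FCFS matching $A$ in place of $A_{N'}$ (where perfectness replaces the case ``neither matched in $A_{N'}$''), already yields $A\cap(\bI_N\times\bI_N)=A_N$ for all $N$ and hence $A=A_\infty$ --- so the separate first-difference paragraph is not strictly needed, and this shortcut is essentially what the paper's one-line invocation of Lemma~\ref{thm.unique} is meant to encode.
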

\begin{proof}
The uniqueness follows from Lemma \ref{thm.unique}.
If we matched all servers up to server $s^n$, then we can find an unmatched customer that will match $s^{n+1}$, because there is an infinite number of customers of each type, and similarly for $c^m,\,c^{m+1}$. 
By the uniqueness stated in Lemma \ref{thm.unique}, any construction in which complete $\mathbb{I}_M,\mathbb{I}_N$ matchings are extended by letting $M,N$ increase to infinity in any order will lead to the same infinite perfect matching over $\bN$ (we present three constructive methods in the next paragraphs).
\end{proof}

We  consider three methods of constructing a FCFS matching step by step, and define three Markov chains associated with them:
\begin{description}
\item[(i)] Matching pair by pair:
Proceeding from a complete FCFS matching of $(c^m,s^n)_{m,n\le N}$, we add the  pair $c^{N+1},s^{N+1}$,  and match them FCFS to compatible  previously unmatched servers and customers if possible, or to each other if possible, or else leave one or both unmatched.  With each step we associate a state that consists of the ordered lists of  the unmatched servers and customers.  It is easy to see that the step by step evolution of the state  defines a countable state discrete time irreducible and aperiodic Markov chain.  We denote it by $O=(O_N)_{N\in \bN}$, this is the `natural' pair by pair FCFS Markov chain.
\item[(ii)] Matching server by server:
Proceeding from the FCFS matching of all the servers $s^n,\, n\le N$, we add the next server $s^{N+1}$, and match it to the first compatible customer that has not yet been matched.   With each step we associate a state that consists of the ordered list of skipped customers.  This again defines a countable state discrete time irreducible and aperiodic Markov chain.  We denote it by $Q^s=(Q^s_N)_{N\in \bN}$, this is the `natural' server by server FCFS Markov chain.
\item[(iii)] Matching customer by customer is  analogous, resulting in a `natural' customer by customer FCFS Markov chain $Q^c=(Q^c_N)_{N\in \bN}$.
\end{description}

All three Markov chains have a common state if at time $N$ a perfect matching of all previous customers and servers is reached, in which case the state consists of empty lists and is denoted by $\emptyset$ or by $0$.  Because these Markov chains have a common state, they will be transient, null recurrent, or ergodic together.  We will define several additional Markov chains associated with the construction of FCFS matching in Section 
\ref{sec.stationary}.  All of them will be transient, null recurrent, or ergodic together, and have a common empty state.  
\begin{definition}
\label{def.ergodic}
We will say that the FCFS bipartite matching for a given $(G,\alpha,\beta)$ is {\em ergodic} if the corresponding Markov chains are ergodic. 
\end{definition}
We introduce some further notation to formulate the condition for ergodicity:   
We let $\S(c_i)$ be the set of server types compatible with customer type $c_i$, and $\C(s_j)$ be the set of customer types compatible with server type $s_j$.   For  $C\subset \C$ and $S\subset \S$, we define
\[
\S(C) = \bigcup_{c_i\in C} \S(c_i), \qquad \C(S) = \bigcup_{s_j\in S} \C(s_j) \:.
\]
We also define 
\[
\U(S) = \overline{\C(\overline{S})} = \C \setminus  \C(\S \setminus \ S)
\]
(where $\;\overline{\;\cdot\;}\;$ denotes the complement) which is the set of customer types that can only be served by the servers in $S$.  We call these the {\em unique customer types} of $S$.
Finally, we let  
\[
\alpha_{C} = \sum_{c_i\in C} \alpha_{c_i}, \qquad \beta_{S} = \sum_{s_j\in S} \beta_{s_j}\:.
\]
The following lemma follows immediately from the definitions.
\begin{lemma}\label{le-equiv}
The following three conditions are equivalent: 
\begin{eqnarray}
\label{eqn.pooling}
&\forall C\subset \C, C\ne\emptyset, C\ne \C, & \alpha_C < \beta_{\S(C)}  \nonumber \\
&\forall S\subset \S, S\ne\emptyset, S\ne \S, & \beta_S < \alpha_{C(S)} \\
&\forall S\subset \S, S\ne\emptyset, S\ne \S, &\beta_S > \alpha_{\U(S)}. \nonumber
\end{eqnarray}
\end{lemma}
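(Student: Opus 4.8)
The plan is to split the statement into two equivalences: the second condition $\Leftrightarrow$ the third, which is a pure complementation on the server side, and the first condition $\Leftrightarrow$ the second, which is the genuine Hall-type duality between customers and servers. Two facts will be used throughout: $\alpha_{\C}=\beta_{\S}=1$ because $\alpha,\beta$ are probability vectors, and $G$ has no isolated vertices because it is connected with $\C,\S\ne\emptyset$; consequently $\C(S)\ne\emptyset$ for every non-empty $S\subseteq\S$ and $\S(C)\ne\emptyset$ for every non-empty $C\subseteq\C$.

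For the second $\Leftrightarrow$ third equivalence, fix a non-empty proper $S\subseteq\S$ and set $\overline{S}=\S\setminus S$, which is again non-empty and proper, the map $S\mapsto\overline{S}$ being a bijection of the family of non-empty proper subsets of $\S$ onto itself. By definition $\U(S)=\C\setminus\C(\overline{S})$, hence $\alpha_{\U(S)}=1-\alpha_{\C(\overline{S})}$; combining this with $\beta_{S}=1-\beta_{\overline{S}}$ rewrites $\beta_{S}>\alpha_{\U(S)}$ as $\beta_{\overline{S}}<\alpha_{\C(\overline{S})}$, which is precisely the second condition for $\overline{S}$. Running over all non-empty proper $S$ gives the equivalence.

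For the first $\Leftrightarrow$ second equivalence I would prove each implication by passing to the maximal ``cross-independent'' complement. To derive the second from the first: fix a non-empty proper $S\subseteq\S$ and put $C=\C\setminus\C(S)$, the customer types with no compatible server in $S$, so that $\S(C)\subseteq\S\setminus S$ and therefore $\beta_{\S(C)}\le 1-\beta_{S}$. If $C=\emptyset$ then $\C(S)=\C$ and the target inequality $\beta_{S}<\alpha_{\C(S)}=1$ holds since $S\ne\S$; the case $C=\C$ cannot occur, as it would force $\C(S)=\emptyset$ with $S\ne\emptyset$, contradicting connectedness. Otherwise $C$ is non-empty proper, the first condition yields $\alpha_{C}<\beta_{\S(C)}\le 1-\beta_{S}$, and since $\alpha_{C}=1-\alpha_{\C(S)}$ this rearranges to $\beta_{S}<\alpha_{\C(S)}$. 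The converse is symmetric: given a non-empty proper $C$, set $S=\S\setminus\S(C)$, dispatch $S=\emptyset$ via $\S(C)=\S$ and $\alpha_{C}<1$, exclude $S=\S$ by connectedness, and otherwise apply the second condition to $S$.

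I do not expect a real difficulty; the only thing to watch is the boundary bookkeeping, namely the cases where the complement one forms degenerates to $\emptyset$ or to the whole ground set, where the desired inequality has to be checked directly from $S\ne\S$ (resp.\ $C\ne\C$) and the normalization $\alpha_{\C}=\beta_{\S}=1$ rather than by quoting a hypothesis valid only for non-empty proper sets, together with the use of connectedness to rule out the degenerate situation in which a non-empty set on one side has empty neighbourhood on the other.
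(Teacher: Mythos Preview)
Your argument is correct; the paper itself gives no proof beyond the remark that the lemma ``follows immediately from the definitions,'' and your write-up simply unpacks that remark with the appropriate complementation and boundary-case bookkeeping. There is nothing to compare in terms of approach, since the paper omits all details.
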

In words, these three conditions ensure that there is enough service capacity for every subset of customer types, there is enough demand for service to ensure that every subset of servers can be fully utilized, and every subset of server types has enough capacity to serve customer types which are unique to this subset.
\begin{definition}
\label{def.crp}
We say that the infinite bipartite matching model $(G,\alpha,\beta)$ satisfies {\em complete resource pooling},  if the equivalent conditions in Lemma \ref{le-equiv} hold. 
\end{definition}
\begin{theorem}[Adan \& Weiss 2011 \cite{adan-weiss:11}]\label{th-ergodic}
The FCFS bipartite matching for $(G,\alpha,\beta)$ is ergodic if and only if complete resource pooling holds.
\end{theorem}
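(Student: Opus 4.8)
The plan is to prove the two implications separately. Since the natural chains $O$, $Q^s$ and $Q^c$ share the empty state $\emptyset$ and are therefore positive recurrent (respectively null recurrent, respectively transient) together, I am free to argue with whichever of them is convenient in each direction, and to read ``ergodic'' either as positive recurrence of $\emptyset$ or, equivalently, as the existence of a stationary distribution.

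\textbf{Necessity of complete resource pooling.} Suppose complete resource pooling fails. By Lemma~\ref{le-equiv} the first family of inequalities then fails as well, so there is a set $C$ with $\emptyset\ne C\ne\C$ and $\alpha_C\ge\beta_{\S(C)}$. I would work with the pair-by-pair chain $O$ and track $U_N$, the number of unmatched customers of a type in $C$ after the first $N$ pairs have been processed. The FCFS rule only ever matches such a customer to a server of a type in $\S(C)$, so for $M\le N$
\[
U_N-U_M \;\ge\; \#\{m\in(M,N]\,:\, c^m\in C\}\;-\;\#\{m\in(M,N]\,:\, s^m\in\S(C)\},
\]
and the right-hand side is a sum of i.i.d.\ bounded increments with mean $\alpha_C-\beta_{\S(C)}\ge 0$ (and nondegenerate, since $0<\alpha_C<1$). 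If the mean is positive then $U_N\to\infty$ a.s.\ and $\emptyset$ is visited only finitely often; if the mean is $0$ then, taking $M=0$, $U_N$ stochastically dominates a nondegenerate mean-zero random walk, whose fluctuations grow like $\sqrt N$, which is incompatible with $U_N$ having a fixed stationary law. In either case the chain is not ergodic, the required contradiction.

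\textbf{Sufficiency of complete resource pooling.} Here I would follow the route of \cite{adan-weiss:11}: exhibit an explicit summable invariant measure for one of the natural chains and conclude from the standard fact that an irreducible aperiodic chain carrying a summable invariant measure is positive recurrent. Concretely, I would (i) give a precise description of the reachable states of $O$ (admissible ordered lists of unmatched items) and write down the product-form candidate, which — mimicking the product form known for the queue-length process of the FCFS parallel-server system — assigns to a state $w$ a weight equal to a product over its recorded unmatched items of explicit factors of the form $\alpha_{c_i}/\beta_S$, the sets $S\subset\S$ being read off from $w$; (ii) verify that this weight solves the (partial) balance equations of $O$, a finite computation matching the ``add a compatible pair / add an incompatible pair'' transitions against the multiplicative structure of the weights; and (iii) sum the weights over all admissible states. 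Step (iii) is where complete resource pooling enters: organising the total mass as iterated geometric series, the ratios that occur are governed precisely by the inequalities $\alpha_C<\beta_{\S(C)}$, equivalently $\beta_S>\alpha_{\U(S)}$, of Definition~\ref{def.crp}, which force geometric decay and hence a finite normalizing constant, whereas divergence sets in as soon as one inequality fails — consistent with the necessity argument. (Alternatively, sufficiency also drops out once the Loynes-type construction of Section~\ref{sec.loynes} is available, since under complete resource pooling it produces a stationary version of these chains directly.)

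I expect step (iii), normalizability of the product-form measure under complete resource pooling, to be the main obstacle: it is what requires the careful description of the state space of $O$ and the bookkeeping that turns the combinatorial $\U(S)$ inequalities into a convergence criterion. By comparison, the balance verification in (ii), the necessity direction, and the transfer of ergodicity among $O$, $Q^s$ and $Q^c$ are routine.
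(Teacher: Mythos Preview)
Your proposal is sound and lands on essentially the same mechanism the paper uses to ``confirm'' the theorem: exhibit a product-form invariant measure and read ergodicity off the finiteness of its normalizing constant. Two differences are worth flagging. First, the paper does not verify balance for $O$ directly; it works instead with the \emph{detailed} chains $Z^s,Z^c,D,E$ of Section~\ref{sec.mechanism}, where the stationary weight is the bare Bernoulli product $\prod_i\alpha_{c_i}^{\#c_i}\prod_j\beta_{s_j}^{\#s_j}$, and checks Kelly's lemma against the time-reversed chain supplied by the exchange transformation (Lemmas~\ref{thm.timereversal}--\ref{thm.timereversal2}). The formula for $\pi_O$ you allude to, with the $\alpha_{c^\ell}/\beta_{\S(\{c^1,\ldots,c^\ell\})}$ and the matching $\beta_{s^\ell}/\alpha_{\C(\{s^1,\ldots,s^\ell\})}$ factors (you will need the server factors too), is then obtained by summation (equation~(\ref{eqn.stationaryO})); verifying partial balance for $O$ from scratch is feasible but noticeably messier than the detailed-chain route, so you may want to switch chains before step~(ii). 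Second, your necessity argument via domination by a mean-$\ge 0$ random walk is more elementary than the paper's, which simply observes that the explicit expression for $B^{-1}$ in (\ref{eqn.B}) contains factors $(\beta_{\{S_1,\ldots,S_\ell\}}-\alpha_{\U(\{S_1,\ldots,S_\ell\})})^{-1}$ and hence diverges precisely when complete resource pooling fails; both arguments are correct, and yours has the virtue of not needing the product form at all.
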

We will derive a simple confirmation of this result in Section \ref{sec.marginals}.

%


\section{Loynes' construction of FCFS bipartite matching over $\bZ$}
\label{sec.loynes}

In this section we show that if complete resource pooling holds, then for  two independent bi-infinite sequences of i.i.d. customers and servers, $(s^n,c^n)_{n\in \bZ}$ there exists almost surely a unique FCFS matching.  This matching coincides with the matchings obtained from the stationary versions of the various Markov chains described above.   The matching is obtained using a Loynes' type scheme (see \cite{loynes:62} for the original Loynes' construction). 

\begin{figure}[htb]
\begin{center}
\includegraphics[width=2.5in]{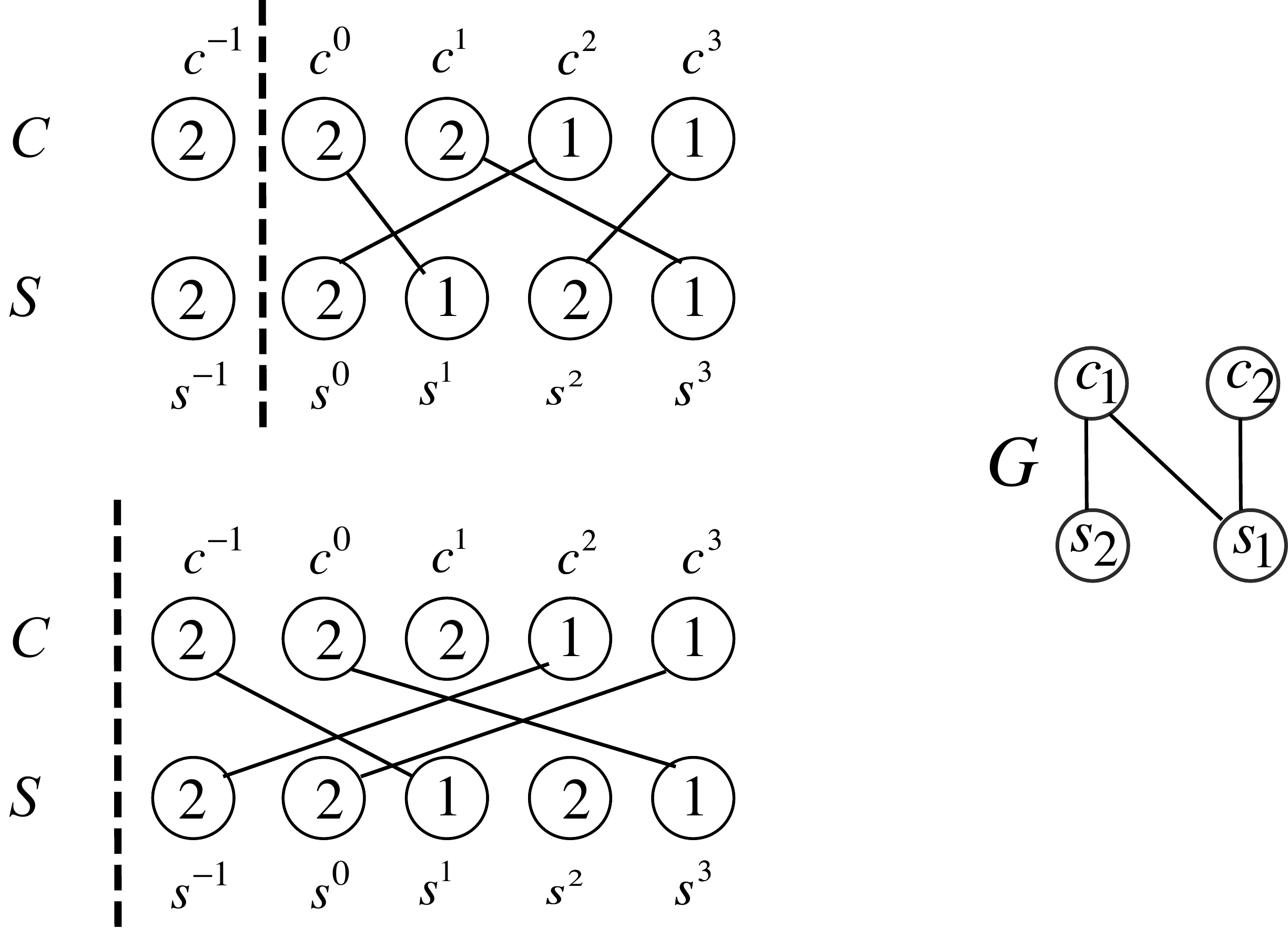}
\end{center}
\caption{Backward step changes the matching}
\label{fig.Loynes}
\end{figure}

Consider the (unique by Theorem \ref{thm.oneside}) matching of $(s^n,c^n)_{n > -K}$, and let $K\to \infty$.
At first glance we notice that as we let $K$ increase the matching changes. This is illustrated in the example of Figure \ref{fig.Loynes}. In this example, if $s^{-K-1}=s_2$ and $c^{-K-1}=c_2$, then starting empty at $-K-1$, the state at $-K$ cannot be empty, so the matching starting empty from $-K$ is different from the matching starting from $-K-1$.
Nevertheless, in this simple example, classical results can be used to prove that the matching does converge as $K\to\infty$.  The condition for that is that complete resource pooling should hold, which in this example happens when $\alpha_2 < \beta_1$.

We sketch the argument to provide some intuition for the general proof to come. Denote by $(O_n^{[-K]})_{n\geq -K}$, the pair by pair FCFS Markov chain associated with the sequence $(c^n,s^n)_{n\geq -K}$ (see the definition above). Denote by $X_n^{[-K]}$ the number of unmatched customers, all of them of type $c_2$, (or of  unmatched servers, all of them of type $s_2$) in $O_n^{[-K]}$. Observe that:
\[
X_{n+1}^{[-K]} = \begin{cases}
X_{n}^{[-K]} + 1& \mathrm{if} \quad (c^{n+1},s^{n+1}) = (c_2,s_2) \\
\max(X_{n}^{[-K]} - 1,0) & \mathrm{if} \quad (c^{n+1},s^{n+1}) = (c_1,s_1) \\
X_{n}^{[-K]} & \mathrm{if} \quad (c^{n+1},s^{n+1}) = (c_1,s_2)\ \mathrm{or} \ (c_2,s_1) \:.
\end{cases}
\]
So $(X_n^{[-K]})_{n\ge -K}$ can be interpreted as the queue-length process of a discrete-time version of an M/M/1 queue.  Complete resource pooling then implies that the drift of this queue is negative. A straightforward adaptation of the original Loynes argument (designed for a continuous time G/G/1 queue with negative drift), yields the existence of a limiting process for $(X_n^{[-K]})_{n\geq -K}$ when $K\rightarrow +\infty$, see for instance Chapter 2.1 in \cite{BaBr}. Let us denote the limiting process by $(X_n^{\infty})_{n\in \bZ}$. Again, the original Loynes argument shows that the set of indices $\{n\in\bZ, \ X_n^{\infty} = 0\}$ is a.s. infinite. These indices can be viewed as regeneration points for the processes $(X_n^{[-K]})_{n\ge -K}$. In particular, if $X_k^{\infty}=0$ then $X_k^{[-K]}=0$ for all $-K\leq k$. Now, observe that these regeneration points are also regeneration points for the processes $(O_n^{[-K]})_{n\geq -K}$, that is, $[X_k^{\infty}=0]\implies [\forall K, \ -K\leq k, \ O_k^{[-K]}=\emptyset]$.
Between two regeneration points, we have a finite complete matching of the sequences of customers and of servers, and this matching will not change over time. Therefore, there exists a limiting bi-infinite matching which is obtained by simply concatenating the finite matchings between regeneration points.

\medskip

The goal is to prove the same thing in general. We will prove the following result:


\begin{theorem}
\label{thm.co-loynes}
For two independent i.i.d. sequences $(s^n,c^n)_{n\in \bZ}$, if complete resource pooling holds, there exists almost surely a unique FCFS matching over all of $\bZ$, and it can be obtained by
 Loynes' scheme, of constructing a FCFS matching from $-K$ to $\infty$, and letting $K \to \infty$.
\end{theorem}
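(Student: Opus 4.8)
The plan is to run the Loynes scheme on the pair-by-pair chain and pass to the limit. For each $K$, write $O^{[-K]}=(O^{[-K]}_n)_{n\geq -K}$ for the pair-by-pair FCFS chain driven by $(c^n,s^n)_{n>-K}$ and started from the empty state at time $-K$; all of these are built on one probability space, from the single two-sided i.i.d.\ sequence $(c^n,s^n)_{n\in\bZ}$. The target is to prove that, under complete resource pooling, for every fixed $n$ the sequence $(O^{[-K]}_n)_{K\geq -n}$ is eventually constant in $K$. Granting this, all the finite matchings $O^{[-K]}$ agree, for $K$ large, on the link through any given position, so the pointwise limit $A^{\infty}=\lim_{K\to\infty}O^{[-K]}$ is a well-defined matching over $\bZ$. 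It is perfect and FCFS: every position is matched in some $O^{[-K]}$, which is a perfect FCFS matching by Theorem~\ref{thm.oneside}, and the two FCFS conditions at a link $(m,n)$ are inherited from any $O^{[-K]}$ with $-K$ smaller than $m$, than $n$, and than the index of any candidate earlier compatible item. This matching is the Loynes limit claimed in the theorem.

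For the stabilisation I would follow the classical pattern: monotonicity together with stability of the forward chain. Monotonicity should take the form of a partial order $\preceq$ on the state space of the pair-by-pair chain for which the empty state is least and the one-step transition map is $\preceq$-monotone; the intuition is that prolonging the input one step further into the past can only enlarge the current state by inserting older unmatched items, and that this relation is propagated by the dynamics. Identifying the right order and checking monotonicity is the combinatorial heart of the matter, and is the analogue for the full detailed matching of the elementary fact, used in the worked $X_n$ example, that a reflected random walk is monotone in its initial value. Granting it, $O^{[-K]}_{-K}=\emptyset\preceq O^{[-K-1]}_{-K}$ forces $O^{[-K]}_n\preceq O^{[-K-1]}_n$ for all $n\geq -K$, so $(O^{[-K]}_n)_K$ is $\preceq$-nondecreasing; it then remains to rule out escape to infinity. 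Here I would invoke Theorem~\ref{th-ergodic}: since the input is stationary, $O^{[-K]}_0$ has the law of the forward chain at time $K$ started empty, which is tight by ergodicity of that chain under complete resource pooling, and a $\preceq$-nondecreasing tight sequence converges almost surely to a genuine finite state. The limiting trajectory $(O^{\infty}_n)_{n\in\bZ}$ is then a stationary version of the chain driven by the two-sided i.i.d.\ input, and it encodes $A^{\infty}$. Geometrically this recovers the regeneration picture from the sketch: the times $n$ with $O^{\infty}_n=\emptyset$ form a stationary point process of positive intensity, and $A^{\infty}$ is the concatenation of the finite, $K$-independent FCFS matchings of the successive blocks of input cut off by these regeneration points.

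Uniqueness of the FCFS matching over $\bZ$ can then be obtained by working between consecutive regeneration points, reducing to the uniqueness already known for truncated windows (Lemma~\ref{thm.unique}, Theorem~\ref{thm.oneside}): any FCFS matching over $\bZ$ must respect the same block decomposition and hence coincide with $A^{\infty}$. The main obstacle, as flagged above, is twofold: pinning down the partial order and proving that the detailed FCFS matching dynamics is genuinely $\preceq$-monotone; and converting complete resource pooling --- an inequality over subsets of $\C$ and $\S$, equivalently a negative-drift condition --- into the non-escape of $(O^{[-K]}_0)_K$. For the second point, leaning on the already-established ergodicity of the one-sided chain (Theorem~\ref{th-ergodic}) rather than redoing a drift computation is the economical route; one could alternatively try to dominate the relevant backlog by an explicit scalar reflected random walk attached to a bottleneck subset, using the third form of complete resource pooling, $\beta_S>\alpha_{\U(S)}$, exactly as in the worked example.
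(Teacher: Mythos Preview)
Your outline follows the classical Loynes template, but the step you yourself flag as ``the combinatorial heart''---a partial order $\preceq$ on the full pair-by-pair state space in which the FCFS dynamics are monotone---is a genuine gap, and it is not the route the paper takes. The state here is a pair of ordered lists of unmatched items, and the paper's own Figure~\ref{fig.Loynes} together with the cascade described in the proof of Lemma~\ref{thm.monotonicity} show that pushing the start back by one step can \emph{rearrange} the set of unmatched items at a fixed later time, not merely enlarge it by inserting older items; so the order you need would have to compare two genuinely different lists, and no candidate is proposed. The paper's Lemmas~\ref{thm.monotonicity} and~\ref{thm.subadditivity} are statements about the scalar \emph{count} of unmatched items only, which is much weaker than what your argument requires. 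Even granting such a $\preceq$, the step ``a $\preceq$-nondecreasing tight sequence converges a.s.\ to a finite state'' is not a general fact for partial orders on countable spaces and would need its own proof.

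The paper avoids both obstacles by coupling rather than by monotone limits. It first fixes a stationary version $O^*$ of the chain via Kolmogorov extension, and then proves \emph{forward} coupling of $O^{[0]}$ with $O^*$: along the recurrent times at which $O^*$ hits $\emptyset$, the scalar length $|O^{[0]}|$ is nonincreasing (by subadditivity, Lemma~\ref{thm.subadditivity}) and strictly decreases with probability bounded below by a fixed $\delta>0$ (Lemma~\ref{thm.decrease} manufactures a short path in $G$ that absorbs any leftover incompatible pair). This gives $E(\tau)<\infty$ for the coupling time. \emph{Backward} coupling then follows by a Borel--Cantelli estimate on the events that $O^{[-k]}$ fails to hit $\emptyset$ between $-K$ and $0$, using $\sum_n P(\tau>n)<\infty$. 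Thus the paper never needs to order the detailed state space; it only controls the scalar count at the regeneration times of $O^*$. Your uniqueness paragraph has a related loose end: there is no argument that an \emph{arbitrary} FCFS matching over $\bZ$ must respect the regeneration blocks of $A^\infty$, and the paper bypasses this by identifying the Loynes limit with the matching encoded by the stationary chain $O^*$ directly.
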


We prove the Theorem in several steps, requiring three lemmas and two propositions.
The next two lemmas are pathwise results which do not depend on any probabilistic assumption. 

\begin{lemma}[Monotonicity]
\label{thm.monotonicity}
Consider $c^1,\ldots,c^M$ and $s^1,\ldots,s^N$, and complete FCFS matching between them.  
Assume there are $K$  customers and $L$ servers left unmatched.  Consider now an additional customer $c^0$, and the complete FCFS matching between $c^0,c^1,\ldots,c^M$ and $s^1,\ldots,s^N$.  Then this matching will have no more than $K+1$ customers and $L$  servers unmatched. 
\end{lemma}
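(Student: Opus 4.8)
The plan is to realize both FCFS matchings through the server-by-server construction (method (ii) of the Introduction, adapted to a finite pool of customers): process the servers $s^1,\dots,s^N$ in this order, and let each server $s^n$ be matched to the earliest customer that is compatible with it and not yet matched. For the original instance this greedy procedure is run against $c^1,\dots,c^M$, and for the perturbed instance against $c^0,c^1,\dots,c^M$; in both cases it produces the unique complete FCFS matching of Lemma~\ref{thm.unique} (see the remark below). I would run the two procedures in lockstep and keep track, after $n$ servers have been processed, of the ordered lists $U_n$ and $U'_n$ of still-unmatched customers in the original and perturbed instances, respectively.

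The core of the proof is the invariant that, for every $n$, either $U'_n=U_n$, or $U'_n=U_n\cup\{c^{e_n}\}$ for a single ``extra'' customer $c^{e_n}$ not belonging to $U_n$. This holds for $n=0$ with $c^{e_0}=c^0$, and I would establish it by induction on $n$. The induction step is a finite case analysis on how $s^{n+1}$ acts: comparing the position of the extra customer $c^{e_n}$ with that of the earliest customer $c^{m^{*}}\in U_n$ compatible with $s^{n+1}$ (if any), either (a) both instances match the same customer $c^{m^{*}}$ of $U_n$ and the extra customer is unchanged; or (b) the perturbed instance matches $c^{e_n}$ while the original matches $c^{m^{*}}$, so the extra customer migrates from $c^{e_n}$ to $c^{m^{*}}$; or (c) the perturbed instance matches $c^{e_n}$ while the original finds no compatible unmatched customer, after which $U'_{n+1}=U_{n+1}$ and, since both lists then evolve identically, they remain equal forever. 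Checking that the invariant (and the disjointness of the extra customer from $U_n$) survives each case is routine.

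Specializing the invariant to $n=N$ gives at once $\#U'_N\le\#U_N+1=K+1$, which is the bound on unmatched customers. For the servers I would observe that the same invariant controls how many servers have found a match: as long as an extra customer is present the two instances have matched exactly the same set of servers, and from the moment they synchronize (case (c)) the perturbed instance has matched exactly one more server than the original; either way the perturbed instance matches at least as many of $s^1,\dots,s^N$ as the original, hence leaves at most $L$ servers unmatched.

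I do not expect a genuine obstacle beyond keeping the case analysis in the induction step exhaustive (in particular not forgetting the degenerate branches where the original instance finds no match and where no extra customer remains). The one external point that deserves an explicit word is the assertion that the server-by-server greedy procedure yields \emph{the} FCFS matching and not merely some matching. This is what underlies the ``natural server by server FCFS Markov chain'' of the Introduction, and it follows by inspecting Definition~\ref{def.fcfs}: the output is a complete matching because a compatible pair $(c^m,s^n)$ with both items unmatched would have been matched when $s^n$ was processed; and the two FCFS conditions hold because, once $s^n$ is matched to $c^m$, every earlier compatible server must already have been matched to a customer preceding $c^m$ (otherwise it would have taken $c^m$ or an earlier customer), and every customer preceding $c^m$ that is compatible with $s^n$ must already have been matched (otherwise $s^n$ would have taken it). Together with the uniqueness in Lemma~\ref{thm.unique}, this licenses working with the construction above throughout.
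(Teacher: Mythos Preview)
Your proof is correct. The invariant you maintain---that after processing $n$ servers the unmatched-customer lists satisfy $U'_n=U_n$ or $U'_n=U_n\cup\{c^{e_n}\}$ with $c^{e_n}\notin U_n$---does survive each case, and the derivation of the server bound from it is clean.

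Your route is genuinely different from the paper's, though the two are dual views of the same phenomenon. The paper compares the two \emph{final} matchings directly via an alternating-path argument: $c^0$ takes the server $s^{n_1}$ that in the $A,B$ matching belonged to some $c^{m_1}$; the displaced $c^{m_1}$ then seeks a new match, necessarily among servers with index $>n_1$, possibly displacing a further $c^{m_2}$, and so on along a chain $n_1<n_2<\cdots$. The chain terminates either when a displaced customer finds no later compatible server (yielding $K+1,L$) or when it captures a server that was unmatched in the $A,B$ matching (yielding $K,L-1$). Your approach instead runs the server-by-server construction for both instances in lockstep and tracks how the single ``extra'' customer migrates. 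The correspondence is that your extra customer $c^{e_n}$, at the moment server $s^{n_k}$ has just been processed, is precisely the paper's displaced customer $c^{m_{k-1}}$ (with $c^{m_0}=c^0$). What the paper's approach buys is that it needs no auxiliary justification that the greedy server-by-server procedure realizes the FCFS matching; it works directly from Definition~\ref{def.fcfs} and Lemma~\ref{thm.unique}. What your approach buys is a more mechanical induction with an explicit invariant, which makes the server-count conclusion (your case (c)) slightly more transparent than in the paper's terminal-case enumeration.
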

\begin{proof}
Denote $A=(c^1,\ldots,c^M)$ and $B=(s^1,\ldots,s^N)$.
In the matching of $(c^0,A)$ and $B$, if $c^0$ has no match, then all the other links in the matching are the same as in the matching of $A$ and $B$, so the total number of unmatched customers is $K+1$ and unmatched servers is $L$.  If $c^0$ is matched to $s^n$ and $s^n$ is unmatched in the matching of $A$ and $B$, then $(c^0,s^n)$ is a new link and all the other links in the matching of  $(c^0,A)$ and $B$ are the same as in the matching of $A,B$, so  the total number of unmatched customers is $K$ and unmatched servers is $L-1$.

If $c^0$ is matched to $s^{n_1}$ and $s^{n_1}$ was matched to $c^{m_1}$ in the $A,B$ matching,  then $(c^0,s^{n_1})$ is a new link, and the link $(s^{n_1},c^{m_1})$ in the $A,B$ matching is disrupted.  We now look for a match for  $c^{m_1}$ in the matching of $(c^0,A)$ and $B$.  Clearly, $c^{m_1}$ is not matched to any of $s^j,\,j<n_1$, since in the construction of the $A,B$ matching $c^{m_1}$ was not matched to any of those.  So $c^{m_1}$ will either remain unmatched, or it will be matched to some $s^{n_2}$, where $n_2>n_1$.  In the former case, all the links of the $A,B$ matching except $(s^{n_1},c^{m_1})$ remain unchanged in the $(c^0,A)$ and $B$ matching, and so the numbers of unmatched items remain $K+1$ and $L$.  In the latter case, there are again two possibilities:  If $s^{n_2}$ is unmatched in the $A,B$ matching, then the $(c^0,A),B$ matching will have disrupted one link and added 2 links retaining all other links of the $A,B$ matching, so the numbers of unmatched items are $K$ and $L-1$.  If $s^{n_2}$ is matched to $c^{m_2}$ in the $A,B$ matching, then the link $s^{n_2},c^{m_2}$ is disrupted, and we now look for a match for  $c^{m_2}$ in the $(c^0,A),B$ matching.  Similar to $c^{m_1}$, either $c^{m_2}$ remains unmatched, resulting in $K+1$ and $L$ unmatched items in the  
$(c^0,A),B$ matching, or, by the same argument as before, $c^{m_2}$ will be matched to $s^{n_3}$, where $n_3>n_2$.  Repeating these arguments for any additional disrupted links, we conclude that  we either end up with one more link, so the number of unmatched items are $K$ and $L-1$, or we have the same number of links and the number of unmatched items are $K+1$ and $L$.
\end{proof}

\begin{lemma}[Subadditivity]
\label{thm.subadditivity}
Let $A'=(c^1,\ldots,c^m),\,A''=(c^{m+1},\ldots,c^M)$ and $B'=(s^1,\ldots,s^n)$, $B''=(s^{n+1},\ldots,s^N)$ and let $A=(c^1,\ldots,c^M)$, $B=(s^1,\ldots,s^N)$.  Consider the complete FCFS matching of $A',B'$, of $A'',B''$, and of $A,B$ and  let $K',K'',K$ be the number of unmatched customers and $L',L'',L$ be the number of unmatched servers in these three matchings.  Then $K \le K'+K''$ and $L\le L'+L''$.
\end{lemma}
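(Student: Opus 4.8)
The plan is to compare the FCFS matching of $(A,B)$ — call it $A^*$ — with the disjoint union $A_0:=A_1\cup A_2$, where $A_1$ and $A_2$ denote the FCFS matchings of $(A',B')$ and of $(A'',B'')$. Since $A_1$ only pairs items among $c^1,\dots,c^m,s^1,\dots,s^n$ and $A_2$ only pairs items among $c^{m+1},\dots,c^M,s^{n+1},\dots,s^N$, their union $A_0$ is a partial matching of $(A,B)$, and it leaves $M-(|A_1|+|A_2|)=K'+K''$ customers and $N-(|A_1|+|A_2|)=L'+L''$ servers unmatched. As $A^*$ and $A_0$ are matchings on the same customer and server sets, $K=M-|A^*|$ and $L=N-|A^*|$, so \emph{both} inequalities of the lemma are equivalent to the single inequality $|A^*|\ge |A_1|+|A_2|$, which is what I would prove.

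To prove $|A^*|\ge |A_1|+|A_2|$ I would use the server-by-server construction of the (unique, by Lemma~\ref{thm.unique}) matching $A^*$: process $s^1,\dots,s^N$ in order, matching each to the earliest currently unmatched compatible customer among $c^1,\dots,c^M$. Call the processing of $s^1,\dots,s^n$ ``phase~$1$''. The key observation is a synchronisation with $A_1$: for every $t\le n$, after $s^1,\dots,s^t$ have been processed, the set of matched customers among $c^1,\dots,c^m$ coincides with the one obtained after processing $s^1,\dots,s^t$ in the server-by-server construction of $A_1$. This follows by an easy induction on $t$: given the claim for $t-1$, the unmatched customers among $c^1,\dots,c^m$ are the same in the two constructions; if one of them is compatible with $s^t$ then both constructions match $s^t$ to the earliest such one (necessarily of index $\le m$, hence earlier than all of $c^{m+1},\dots,c^M$), and otherwise the $A_1$-construction leaves $s^t$ unmatched while the $A^*$-construction leaves $s^t$ unmatched or matches it to a customer of index $>m$ — in all cases the matched subset of $\{c^1,\dots,c^m\}$ is updated identically. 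Taking $t=n$: exactly $K'$ customers of index $\le m$ remain unmatched after phase~$1$, phase~$1$ has created $m-K'$ links to customers of index $\le m$, and if $W\subseteq\{c^{m+1},\dots,c^M\}$ is the set of customers matched during phase~$1$, then $w:=|W|\le L'$, because each of them was taken by a server having no available compatible customer of index $\le m$ at its turn, hence a server left unmatched in $A_1$. Therefore $|A^*|=(m-K')+w+p$, where $p$ is the number of links created during ``phase~$2$'', the processing of $s^{n+1},\dots,s^N$ against the pool $R$ made of the $K'$ leftover customers of index $\le m$ followed by the customers of $\{c^{m+1},\dots,c^M\}\setminus W$; by Lemma~\ref{thm.unique}, $p$ is the number of links of the FCFS matching of $R$ and $s^{n+1},\dots,s^N$.

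It remains to check $p+w\ge |A_2|$. The instance ``$R$ versus $s^{n+1},\dots,s^N$'' is obtained from ``$c^{m+1},\dots,c^M$ versus $s^{n+1},\dots,s^N$'', whose FCFS matching is $A_2$, by prepending $K'$ customers and then deleting the $w$ customers of $W$. By Lemma~\ref{thm.monotonicity}, prepending a customer does not decrease the number of links of the FCFS matching; and by the surgical argument in the proof of Lemma~\ref{thm.monotonicity}, run in reverse, deleting one customer decreases that number by at most one. Hence $p\ge |A_2|-w$, so $|A^*|=(m-K')+w+p\ge (m-K')+|A_2|=|A_1|+|A_2|$, as desired.

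The step I expect to be the real obstacle is the synchronisation argument and the accounting resting on it. The naive hope — that $A^*$ leaves at most $K'$ customers of the first block and at most $K''$ customers of the second block unmatched — is simply false, since servers of the first block may ``steal'' customers of the second block in the global matching, making the second block fare worse than $K''$. What the synchronisation lemma (together with $w\le L'$) shows is that each such stolen customer is exactly paid for by a unit of idle service capacity left over from the first block, and carrying this balance through phase~$2$ by means of the monotonicity facts above is where the work lies. One also needs the mild extension of Lemma~\ref{thm.monotonicity} to insertion or deletion of a customer at an arbitrary position of the sequence, but the surgery is unchanged because it only uses the relative order of the customers.
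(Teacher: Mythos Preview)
Your argument is correct, but it takes a genuinely different and more laborious route than the paper. The paper's key observation is that the restriction of the FCFS matching of $(A,B)$ to the links lying entirely within $A'\times B'$ is exactly the FCFS matching of $(A',B')$; hence the unmatched items of the $(A,B)$-matching are precisely those of the FCFS matching of $(\hat A'\!\cdot A'',\,\hat B'\!\cdot B'')$, where $\hat A',\hat B'$ are the ordered leftovers from matching $(A',B')$. This last instance is reached from $(A'',B'')$ by \emph{prepending} the $K'$ customers of $\hat A'$ and the $L'$ servers of $\hat B'$ one at a time, and Lemma~\ref{thm.monotonicity} exactly as stated (prepending at the front) gives $K\le K'+K''$ and $L\le L'+L''$ immediately --- no deletions, no interior insertions, no extension of the lemma.

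Your route synchronises only on the server side, so after phase~1 the residual pool $R$ is not of the clean form ``leftover prefix followed by $A''$'': some customers of $A''$ have been stolen by servers of $B'$. This forces you to compare $R$ with $A''$ via deletions at interior positions, hence to invoke an extension of Lemma~\ref{thm.monotonicity} to arbitrary insertion/deletion points. The extension is true, but your one-line justification (``the surgery only uses relative order'') undersells the work: when the inserted customer sits in the middle it is no longer automatically the earliest candidate for every compatible server, so the bumping chain has to be set up differently (cleanest is to argue deletion directly, tracking server-by-server a single ``displaced'' customer token that either terminates at an unmatched server --- one link lost --- or at a customer that was already unmatched --- no link lost). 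That is a genuine auxiliary lemma the paper avoids entirely by its two-sided reduction. Incidentally, your bound $w\le L'$, while correct and good for intuition, plays no role in the final inequality $p+w\ge|A_2|$.
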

\begin{proof}
Let $\hat{A}'=(\hat{c}^1,\ldots,\hat{c}^{K'})$ and $\hat{B}'=(\hat{s}^1\ldots,\hat{s}^{L'})$ be the ordered unmatched customers and servers from the complete FCFS matching of $A',B'$.  Then the FCFS matching of $(\hat{A}',A'')$ and $(\hat{B}',B'')$ will have exactly the same ordered unmatched customers and servers as the FCFS matching of $A$ and $B$.   We now construct the matching of $(\hat{A}',A'')$ and $(\hat{B}',B'')$ in steps.  We look first at the matching of $(\hat{c}^{K'},A'')$ with $B''$, next the matching of $(\hat{c}^{K'},A'')$ with $(\hat{s}^{L'},B'')$, and so on, pre-pending each of $(\hat{c}^1,\ldots,\hat{c}^{K'})$ and $(\hat{s}^1\ldots,\hat{s}^{L'})$ in reverse order, to the current matching.  At each step, by Lemma \ref{thm.monotonicity}, the number of links is unchanged or increased by 1, and the number of unmatched items  goes down by one or is increased by 1.  It follows that the total number unmatched customers is $\le K'+K''$ and of unmatched servers is $\le L'+L''$.
\end{proof}

\begin{lemma}
\label{thm.decrease}
Consider an incompatible pair $c^0,s^0$.  Then there exists an $h$ and a sequence 
$c^1,\ldots,c^h$, $s^1,\ldots,s^h$ with $h\leq \min\{I,J\}-1$, where $(s^i,c^i),\,i=1,\ldots,h$ are compatible, such that the FCFS matching of  $c^0,c^1,\ldots,c^h,s^0,s^1,\ldots,s^h$ is perfect.  
\end{lemma}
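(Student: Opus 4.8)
The plan is to turn the combinatorial input (connectedness of $G$) into an explicit finite configuration, exhibit a completing matching by hand, check it is FCFS, and then invoke the uniqueness half of Lemma~\ref{thm.unique}. The right finite configuration is obtained from a \emph{shortest} path between the types of $c^0$ and $s^0$.

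First I would set $\gamma_0\in\C$ and $\sigma_0\in\S$ for the types of $c^0$ and $s^0$. Since $G$ is connected and $(\gamma_0,\sigma_0)\notin\E$, there is a path from $\gamma_0$ to $\sigma_0$; fix a shortest one. Being bipartite it has the alternating form $u_0,w_1,u_1,w_2,\dots,u_h,w_{h+1}$ with $u_0=\gamma_0$, $w_{h+1}=\sigma_0$, $u_i\in\C$, $w_i\in\S$, and with edges $(u_{i-1},w_i)$ for $1\le i\le h+1$ and $(u_i,w_i)$ for $1\le i\le h$. The key structural fact I would use is that a shortest path is induced (a chord would yield a shorter path), so the subgraph of $G$ spanned by $\{u_0,\dots,u_h\}\cup\{w_1,\dots,w_{h+1}\}$ is \emph{exactly} this path. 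In particular the $u_i$ are pairwise distinct and so are the $w_i$, whence $h+1\le\min\{I,J\}$, i.e.\ $h\le\min\{I,J\}-1$; and since $(\gamma_0,\sigma_0)\notin\E$ the path has length at least $3$, so $h\ge 1$.

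Next I would take $c^i$ of type $u_i$ and $s^i$ of type $w_i$ for $i=1,\dots,h$; each pair $(s^i,c^i)$ is compatible since $(u_i,w_i)$ is a path edge. Consider now the sequences $c^0,c^1,\dots,c^h$ (types $u_0,\dots,u_h$) and $s^0,s^1,\dots,s^h$ (types $\sigma_0=w_{h+1},w_1,\dots,w_h$). Because the spanned subgraph is precisely the induced path, the compatibilities in these position coordinates are fully explicit: $c^0$ is compatible only with $s^1$; for $1\le i\le h-1$, $c^i$ is compatible exactly with $s^i$ and $s^{i+1}$; $c^h$ is compatible exactly with $s^h$ and $s^0$; equivalently, $s^j$ is compatible exactly with $c^{j-1},c^j$ for $1\le j\le h$, and $s^0$ only with $c^h$. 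I then claim that the ``cyclic'' matching
\[
A=\{(i,i+1):0\le i\le h-1\}\cup\{(h,0)\}
\]
is a complete FCFS matching: it is perfect (every $c^i$ and every $s^j$ occurs), hence complete, and the FCFS conditions of Definition~\ref{def.fcfs} are checked pair by pair — for a pair $(i,i+1)$ the only server earlier than $s^{i+1}$ that is compatible with $c^i$ is $s^i$, which is matched in $A$ to $c^{i-1}$ with $i-1<i$ (for $i=0$ even this cannot occur, since $c^0$ is incompatible with $s^0$), and no customer earlier than $c^i$ is compatible with $s^{i+1}$; for the pair $(h,0)$ there is no server before $s^0$, and the only customer compatible with $s^0$ is $c^h$ itself. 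By the uniqueness in Lemma~\ref{thm.unique}, $A$ is the FCFS matching of $c^0,\dots,c^h,s^0,\dots,s^h$, and it is perfect.

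The only step that genuinely needs care is the passage from the graph to the sequences: one must use that a shortest path is induced, so that ``chords'' cannot spoil the degree-$\le 2$, cyclic adjacency structure, and one must align the indices correctly so that $\sigma_0$ sits in position $0$ on the server side while the path edges become the adjacencies $c^i$--$\{s^i,s^{i+1}\}$ (and $c^h$--$\{s^h,s^0\}$). Once that compatibility pattern is pinned down, checking that the cyclic matching is FCFS and perfect is routine, and uniqueness finishes the argument; I would also re-verify the bound $h\le\min\{I,J\}-1$ against the count of distinct path vertices of each colour.
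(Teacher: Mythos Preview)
Your proof is correct and follows essentially the same route as the paper's: both construct the sequences from a path in $G$ between the types of $c^0$ and $s^0$, exhibit the ``cyclic'' matching along alternate path edges, and observe it is the (perfect) FCFS matching. Your choice of a \emph{shortest} (hence induced) path is a useful refinement that lets you pin down the compatibility pattern exactly and verify the FCFS property edge by edge, whereas the paper simply takes a simple path and asserts the conclusion.
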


Clearly, the probability of occurrence of such a sequence is strictly positive and lower bounded by: $\delta = \prod_{(c,s)\in \E} \alpha_{c} \beta_{s}$.

\begin{proof}
Because the bipartite graph is connected, and there is no direct edge between $c^0,s^0$, there exists a simple  path (i.e. with no repeated nodes), $c^0\to s_{j_1} \to c_{i_1} \cdots \to  s_{j_h} \to c_{i_h} \to s^0$ which connects them, with $1\le h \le \min\{I,J\}-1$.  Clearly, the FCFS matching of  $c^0,c^1,\ldots,c^h,s^0,s^1,\ldots,s^h$, where $c^l=c_{i_l},s^l=s_{j_l},\,l=1,\ldots,h$ is perfect, with exactly the links of the path, where $c^0$ is matched to $s^1$, and $s^0$ is matched to $c^h$.  Note that FCFS matching of $c^1,\ldots,c^h,s^1,\ldots,s^h$ consists of $h$ perfectly matched blocks of length one.   
%
\end{proof}

We assume from now on in this section that complete resource pooling holds. 
By Theorem \ref{th-ergodic}, the pair by pair matching Markov chain $(O_N)_{N\in \bN}$ is ergodic. Using the Kolmogorov extension theorem \cite{oksendal:03}, we may define (in a non-constructive way) a stationary version $O^*=(O^*_N)_{N\in \bZ}$ of the Markov chain. Define also $O^{[k]}=(O^{[k]}_N)_{N \ge -k}$ the realization of the Markov chain that starts at $O^{[k]}_{-k}=\emptyset$.

Our first task is to show forward coupling, namely that $O^*$ and $O^{[0]}$ coincide after a finite time $\tau$ with $E(\tau)<\infty$.  Following that we use standard arguments to show backward coupling and convergence to a unique matching.

\begin{proposition}[Forward coupling]
\label{thm.forward}
The two processes $(O^*_n)_{n\in \bN}$ and $(O^{[0]}_n)_{n\in \bN}$ will couple after a finite time $\tau$, with $E(\tau)<\infty$.
%
%
\end{proposition}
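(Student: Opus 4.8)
The plan is to establish forward coupling of the pair-by-pair chain by exhibiting a positive-probability "synchronizing word" that drives both processes $O^*$ and $O^{[0]}$ to the empty state $\emptyset$ simultaneously, and then to conclude finiteness of the expected coupling time by a standard geometric-trials argument. The crucial monotonicity ingredient is Lemma \ref{thm.monotonicity}: if at some time $n$ the two realizations have states $O^*_n$ and $O^{[0]}_n$ built from the same customer/server inputs from time $0$ onward but different "initial residuals," then adding identical fresh pairs $(c^{n+1},s^{n+1}),(c^{n+2},s^{n+2}),\ldots$ can only shrink the discrepancy; in particular, once both states become $\emptyset$ at the same time, they agree forever after, since from then on they are driven by the same i.i.d.\ input and start from the same (empty) state.

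First I would make precise the coupling construction: realize $O^*$ and $O^{[0]}$ on a common probability space, feeding both chains the same i.i.d.\ sequence $(c^n,s^n)_{n\ge 1}$, with $O^{[0]}_0=\emptyset$ and $O^*_0$ distributed according to the stationary law (and independent of the future input). Define $\tau=\inf\{n\ge 0 : O^*_n=O^{[0]}_n\}$; by the monotonicity remark it suffices to show that both reach $\emptyset$ at a common time with finite expectation, because $O^*_n=O^{[0]}_n=\emptyset$ at any time $n$ forces coincidence thereafter. Second, I would build the synchronizing word. The state of $O^*_n$ (resp.\ $O^{[0]}_n$) is an ordered list of unmatched customers and an ordered list of unmatched servers. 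The trick is that a string of fresh pairs of the form produced by Lemma \ref{thm.decrease} can "clean out" residual items one at a time: given any unmatched customer $c^0$ sitting at the head of the residual list of $O^*_n$, the graph is connected, so there is a short alternating path of length $h\le\min\{I,J\}-1$ of compatible pairs whose appearance as the next $h$ fresh inputs matches $c^0$ off while leaving the $h$ intermediate customers matched among themselves in singleton blocks — and the same $h$ fresh pairs act on $O^{[0]}_n$ as well. Iterating, one can drain the (finite) residual lists of $O^*_n$; a similar draining argument handles $O^{[0]}_n$, and since a word that drains one state while only adding self-matched singleton blocks to an already-empty state is harmless, one can concatenate the two draining words. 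One must be slightly careful here: the residual list lengths of $O^*_n$ are random but a.s.\ finite, so the required word length is a.s.\ finite; what we actually need is a single deterministic word length $\ell$ and a uniform lower bound $\epsilon>0$ on the probability that, starting from \emph{any} pair of states $(O^*_n,O^{[0]}_n)$ both having residual lists of total length at most some threshold $R$, the next $\ell$ inputs synchronize them to $\emptyset$.

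The way to get that uniformity is to observe that the chain $O^*$ is positive recurrent (Theorem \ref{th-ergodic}), hence visits states with bounded residual-list length infinitely often with positive frequency; on the event that at time $n$ the total residual length in $O^*_n$ is at most $R$ and in $O^{[0]}_n$ is at most $R'$ (the latter being automatic since $O^{[0]}$ is also positive recurrent), a word of length $\ell = \ell(R,R')$ of the Lemma-\ref{thm.decrease} type synchronizes, with probability at least $\delta^{\ell}$ where $\delta=\prod_{(c,s)\in\E}\alpha_c\beta_s>0$ as noted after Lemma \ref{thm.decrease}. Split time into blocks and apply a Borel--Cantelli / geometric-trials estimate: the number of blocks until a successful synchronization is stochastically dominated by a geometric random variable, and the time to first enter the "both residuals small" event has finite expectation by positive recurrence (return times to $\emptyset$ have finite mean, and $\emptyset$ has total residual length $0$). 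Combining, $E(\tau)<\infty$.

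The main obstacle I anticipate is precisely the uniformity issue in the middle paragraph: turning the \emph{existence} of a synchronizing word (which follows softly from connectivity via Lemma \ref{thm.decrease}) into a statement with a \emph{fixed} word length and a \emph{uniform} positive probability, valid over the relevant family of state pairs. The clean way around it is to not try to synchronize from arbitrary states, but only from states in which $O^{[0]}$ has just visited $\emptyset$ (so its residual is empty and only $O^*$'s finite residual must be drained); then the needed word length depends only on the single finite list $O^*_n$, and one uses positive recurrence of $O^{[0]}$ to guarantee infinitely many such opportunities together with a large-deviations-free tail bound on how long $O^*$'s residual can stay large. Everything else — the monotonicity-based "once equal, always equal," the geometric-trials bound, the identification of $\delta$ — is routine given Lemmas \ref{thm.monotonicity} and \ref{thm.decrease} and Theorem \ref{th-ergodic}.
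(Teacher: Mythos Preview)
Your overall architecture --- synchronizing words plus a geometric-trials bound --- is correct in spirit, and is also what the paper does. But you have the two chains in the wrong roles, and you are missing the lemma that makes the argument close.

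The uniformity problem you yourself flag is real and your proposed fix does not resolve it. If you wait for times $n$ at which $O^{[0]}_n=\emptyset$ and then try to drain $O^*_n$, the drain-word length depends on $|O^*_n|$, which is a.s.\ finite but unbounded and, crucially, not monotone along that subsequence of times. So the success probability of a ``trial'' is not bounded away from zero, and the vague appeal to ``a large-deviations-free tail bound on how long $O^*$'s residual can stay large'' is doing all the work without any mechanism behind it. Also, your reading of Lemma~\ref{thm.monotonicity} is off: that lemma does not say the discrepancy between two realizations shrinks; it controls what happens to the unmatched counts when a single item is prepended to a finite matching.

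The paper's proof reverses the roles and thereby gets the missing uniformity for free. It looks at the sequence of times $M_0<M_1<\cdots$ at which the \emph{stationary} chain $O^*$ hits $\emptyset$. Between consecutive $M_\ell$'s the input block is a perfect match on its own (since $O^*$ goes from empty to empty), and then Lemma~\ref{thm.subadditivity} (subadditivity), together with Lemma~\ref{thm.monotonicity}, gives that $|O^{[0]}_{M_0}|\ge |O^{[0]}_{M_1}|\ge\cdots$ is non-increasing along this subsequence. Now one only needs a uniform positive probability that the count strictly drops, and this is exactly what Lemma~\ref{thm.decrease} supplies: with probability $\ge\delta$ the next $\bar h=\min\{I,J\}-1$ blocks are length-one compatible pairs forming a path that connects an unmatched pair of $O^{[0]}_{M_\ell}$, forcing a decrease by at least one while keeping $O^*$ empty. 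So the number of $M_\ell$-steps until coupling is dominated by $|O^{[0]}_{M_0}|\cdot\bar h$ times a geometric$(\delta)$, and since $|O^{[0]}_{M_0}|\le M_0$ and $E(M_0),E(M_1-M_0)<\infty$ by ergodicity, $E(\tau)<\infty$ follows. The ingredient you are missing is precisely Lemma~\ref{thm.subadditivity}; once you invoke it at the $O^*$-regeneration times, your uniformity difficulty disappears.
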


\begin{proof}  
Denote by $|z|$ the number of unmatched customers (equal to the number of unmatched servers), for any state $z$ of the pair by pair matching process, we refer to it as the length of the state.   
Consider the sequence of times $0 \le M_0 < M_1 < \cdots < M_\ell,\cdots$ at which $O^*_{M_\ell} = \emptyset$.  This sequence is infinite with probability 1, and $E(M_\ell) = E(M_0)+ \ell E(M_1-M_0) < \infty, \ell\ge 0$ by the ergodicity.  
Consider the state $z_0=O^{[0]}_{M_0}$.  Then $|z_0| \le M_0$.   
By the monotonicity result of Lemma \ref{thm.monotonicity} and Lemma \ref{thm.subadditivity}, the states of $O^{[0]}$ satisfy
$|z_0| \ge |O^{[0]}_{M_1}| \ge \ldots \ge |O^{[0]}_{M_\ell}|$, i.e. the  length of the state of $O^{[0]}$ at the times $M_0,M_1,\ldots$ is non-increasing.  This is because each block of customers and servers in times between $M_{\ell-1}$ and $M_\ell$ on its own has 0 unmatched.  Furthermore, there is a positive probability greater or equal to $\delta >0$ that every $\bar{h}=\min\{I,J\}-1$ steps along this sequence the length will actually decrease by at least 1, as shown in Lemma \ref{thm.decrease}.  This will happen if the blocks that follow $M_\ell$ are blocks of length 1 forming a path in the bipartite graph that connects an unmatched pair in $O^{[0]}_{M_\ell}$.  Such a sequence  of blocks will consist  of  $\le \bar{h}$ blocks, and it will occur with probability $\ge \delta$.
Hence, there will be coupling after at most  $\sum_{j=1}^{|z_0|} L_j \bar{h}$ perfect matching blocks of $O^*$, where $L_j$ are i.i.d. geometric random variables with $\delta$ probability of success.  So coupling occurs almost surely, and the coupling time $\tau$ satisfies $E(\tau) \le  E(M_0) \left( 1 + \frac{1}{\delta} \bar{h} E(M_1-M_0) \right)$.
\end{proof}

Note that once $O^{[0]}$ and $O^*$ couple, they stay together forever.
We now need to show backward coupling.   

\begin{proposition}[Backward coupling]
\label{thm.backcoupling}
Let $O^*$ be the stationary pair by pair FCFS matching process, and let $O^{[-k]}$ be the process starting empty at time $-k$.    Then $\lim_{k\to\infty} O^{[-k]}_N = O^*_N$ for all $N\in\bZ$ almost surely.
\end{proposition}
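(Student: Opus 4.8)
The plan is to deduce backward coupling from forward coupling (Proposition~\ref{thm.forward}) together with the stationarity of $O^*$, via the first Borel--Cantelli lemma. I work on the probability space carrying the bi-infinite i.i.d.\ input $(c^n,s^n)_{n\in\bZ}$ together with the stationary version $O^*$ of the pair-by-pair chain, realized jointly with this input; all of $O^*$, $O^{[0]}$ and the processes $O^{[-k]}$, $k\ge 0$, are driven by this \emph{single} realization through the same one-step recursion $O_{N+1}=\Phi\big(O_N,(c^{N+1},s^{N+1})\big)$, the only difference being the initial condition ($O^*$ stationary, $O^{[-k]}_{-k}=\emptyset$). Since $\Phi$ is deterministic, any two such processes that agree at one time agree ever after; in particular $O^{[-k]}_N=O^*_N$ for every $N\ge\tau_{-k}$, where $\tau_{-k}:=\inf\{N\ge -k:\ O^{[-k]}_N=O^*_N\}$. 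Because the state space is discrete, proving the Proposition amounts to showing that, for each fixed $N$, the sequence $(O^{[-k]}_N)_k$ is eventually equal to $O^*_N$, i.e.\ that $\tau_{-k}\le N$ for all large $k$.

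First I would record the distributional identity $\tau_{-k}+k\stackrel{d}{=}\tau$ for every $k\ge 0$, where $\tau$ is the (first) coupling time of $O^{[0]}$ and $O^*$ from Proposition~\ref{thm.forward}. Indeed, a time shift by $k$ (moving the origin from $-k$ to $0$) sends the pair $\big((O^{[-k]}_N)_{N\ge -k},(O^*_N)_{N\ge -k}\big)$ to a pair with the same joint law as $\big((O^{[0]}_N)_{N\ge 0},(O^*_N)_{N\ge 0}\big)$: the shift preserves the law of the i.i.d.\ input, it preserves the law of $O^*$ jointly with the input by stationarity, and it maps the shifted $O^{[-k]}$ to the process started empty at time $0$ run by the same recursion. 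Hence $\tau_{-k}<\infty$ a.s.\ and, crucially, $E[\tau_{-k}+k]=E[\tau]<\infty$; this is the one place where I lean on the full strength of Proposition~\ref{thm.forward} (finite expectation, not merely a.s.\ finiteness), which itself rests on the monotonicity and subadditivity Lemmas~\ref{thm.monotonicity}--\ref{thm.subadditivity} and on ergodicity (Theorem~\ref{th-ergodic}).

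Next I would fix $N\in\bZ$ and estimate
\[
\sum_{k\ge 0} P\big(\tau_{-k}>N\big)\;=\;\sum_{k\ge 0} P\big(\tau>N+k\big)\;\le\;\sum_{m\ge 0} P(\tau>m)\;=\;E[\tau]\;<\;\infty .
\]
By the first Borel--Cantelli lemma, almost surely only finitely many $k$ satisfy $\tau_{-k}>N$; equivalently there is an a.s.\ finite random $K_0(N)$ with $\tau_{-k}\le N$, hence $O^{[-k]}_N=O^*_N$, for all $k\ge K_0(N)$. This gives $\lim_{k\to\infty}O^{[-k]}_N=O^*_N$ a.s.\ for each fixed $N$. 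Intersecting the corresponding null events over the countable set $N\in\bZ$ yields a single almost sure event on which $\lim_{k\to\infty}O^{[-k]}_N=O^*_N$ for \emph{all} $N$ simultaneously, which is the claim.

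The step I expect to be the main obstacle is not any of the probabilistic estimates, which are routine once forward coupling is available, but the bookkeeping needed to make the shift identity $\tau_{-k}+k\stackrel{d}{=}\tau$ fully rigorous: one must be explicit that $O^*$ was constructed \emph{jointly} with, and stationarily with respect to, the driving input, that every $O^{[-k]}$ is obtained from that same input by the same recursion, and that the conclusion of Proposition~\ref{thm.forward} is invariant under shifting the data --- which holds here precisely because the regeneration structure of $O^*$ at the empty state and the monotonicity/subadditivity Lemmas~\ref{thm.monotonicity}--\ref{thm.subadditivity} are themselves shift-invariant. (A monotone-coupling argument \`a la Loynes would bypass Borel--Cantelli, but the state here is a pair of ordered lists with no obvious total order, so the coupling route via $E[\tau]<\infty$ seems the natural one.)
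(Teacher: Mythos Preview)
Your proposal is correct and takes a genuinely different route from the paper. The paper defines the same coupling times $T_k$ (your $\tau_{-k}$) but, instead of invoking a shift identity and Borel--Cantelli, argues through the regeneration structure at the empty state: it shows that for $n>K$ the process $O^{[-n]}$ can fail to have merged with $O^*$ by time $\hat T_K=\max_{0\le k\le K}T_k$ only if $O^{[-n]}$ avoids $\emptyset$ throughout $[-K,0]$, bounds the probability of the union of such events by the tail $\sum_{n>K}P(\tau>n)$ (here $\tau$ is the return time to $\emptyset$), and deduces that $\hat T=\sup_{k\ge 0}T_k<\infty$ a.s.; a final stationarity shift then gives coupling at any fixed $N$. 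Your approach is shorter and more transparent, since the shift identity $\tau_{-k}+k\stackrel d=\tau$ packages the stationarity once and for all; what the paper's route buys is that it never needs to make the joint stationarity of $O^*$ with the driving input explicit, relying instead only on the regenerative visits to $\emptyset$.

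One small slip to fix: your displayed inequality $\sum_{k\ge 0}P(\tau>N+k)\le\sum_{m\ge 0}P(\tau>m)$ fails for $N<0$, since $\tau\ge 0$ forces $P(\tau>m)=1$ for $m<0$; the sum is then $|N|+E[\tau]$, still finite, so Borel--Cantelli applies unchanged.
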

\begin{proof}
The statement of almost surely refers to the product measure $(\alpha\otimes\beta)^{\otimes \bZ}$ on the product space $(C\times S)^{\bZ}$. 
%

Define $T_k = \inf \{N\ge -k: O^{[-k]}_N = O^*_N \}$.  By the forward coupling Proposition \ref{thm.forward}, we get that $T_k$ is almost surely finite. 
Let $\hat{T}_K = \max_{0\le k \le K} T_k$.  It is  $\ge 0$, and is also almost surely finite 
for any $K$.  $\hat{T}_K$ is the time at which all the processes starting empty at time $-k$, where $0\le k\le K$, couple with $O^*$, and remain merged forever.
Define the event $E_K = \{ \omega : \forall n \ge 0,\, O^{[-n]}_{\hat{T}_K} = O^*_{\hat{T}_K} \}$, in words, those $\omega$ for which the process starting empty at any time before $0$, will merge with $O^*$ by time $\hat{T}_K$.  We claim that $P(E_K)>0$.  We evaluate $P(\overline{E}_K)$.  
For any fixed $n \geq 0$, call $E_{n,K}$ the event that $O^{[-n]}$ couples with $O^*$ by time $\hat{T}_K$. 
We have 
$E_K = \bigcap_{n \geq 0} E_{n,K} = \bigcap_{n > K} E_{n,K}$ (by definition of $\hat{T}_K$, $E_{n,K}$ is always true for $n\leq K$, so we only need to consider $n>K$), so $\overline{E_K} = \bigcup_{n > K} \overline{E_{n,K}}$.   

The event $\overline{E_{n,K}}$ will happen if starting at the last time prior to $-K$ at which the process $O^{[-n]}$ was empty,  the next time that it is empty is after time $0$.  
The reason for that is that otherwise the process $O^{[-n]}$ reaches state $\emptyset$ at some time  $k\in [-K,0]$ and from that time onwards it is coupled with  $O^{[-k]}$, and will couple with $O^*$ by time $\hat{T}_K$.   

Define for $n>K$, $D_n =\{\omega : O^{[-n]}_m \ne \emptyset \mbox{ for } -n < m \le 0\}$.  Clearly by the above, $\bigcup_{n > K} \overline{E_{n,K}} \subseteq \bigcup_{n > K} D_n$.  
Let $\tau$ denote the recurrence time of the empty state.  Then: 
\[
P(\overline{E_K}) = P(\bigcup_{n > K} \overline{E_{n,K}}) \le P(\bigcup_{n > K} D_n) \le 
\sum_{n > K} P(\tau > n).
\]
By the ergodicity $\sum_{l=0}^\infty P(\tau> l) =E(\tau)<\infty$.  Hence we have that $P(\overline{E_K}) \to 0$ as $K\to\infty$, and therefore  $P(E_K)>0$ for large enough $K$, and 
$P(E_K) \to 1$ as $K\to \infty$.  Note also that $E_K \subseteq E_{K+1}$.

Define now $\hat{T}= \sup_{k\ge 0}  T_k$.  We claim that $\hat{T}$ is finite a.s.  Consider any $\omega$. Then by $P(E_K) \to 1$ as $K\to \infty$ and by the monotonicity of $E_K$, almost surely for this $\omega$ there exists a value $\ell$ such that $\omega \in E_\ell$.  But if $\omega \in E_l$, then $\hat{T}(\omega) \le \hat{T}_\ell < \infty$.   

So,  all processes starting empty before time $0$ will couple with $O^*$ by time $\hat{T}$.  
By the stationarity of the sequences $(s^n,c^n)_{n\in\bZ}$ and of $O^*$,   
we then also have that all processes $O^{[-k]}_N$ starting empty before $-k$ will couple with $O^*$ by time 0, if $k\ge \hat{T}$.  
Hence using the Loynes' scheme of starting empty at $-k$ and letting $k\to\infty$ the constructed process will merge with $O^*$ at time 0.  But the same argument holds not just for 0, but for any negative time $-N$.   Hence $O^{[-k]}$ and $O^*$ couple at $-N$ (and stay coupled) for any $k > N+  \hat{T}$.  This completes the proof.
\end{proof}

\begin{proof}[Proof of Theorem \ref{thm.co-loynes}]
We saw that $\lim_{k\to\infty} O^{[-k]}_N=O^*_N$ for all $N$ almost surely.  Each process $O^{[-k]}_N$ determines matches uniquely for all $N> - k$, so if we fix $N$, matches from $N$ onwards are uniquely determined by $\lim_{k\to\infty} O^{[-k]}_N$.  Hence $(O^*_N)_{N\in\bZ}$ determines for every server $s^n$ and every customer $c^n$ his match, uniquely, almost surely.  This proves the  theorem. 
\end{proof}

\section{Exchange Transformation and Dynamic Reversibility}
\label{sec.reversibility}
\begin{figure}[htb]
\begin{center}
\includegraphics[width=2.5in]{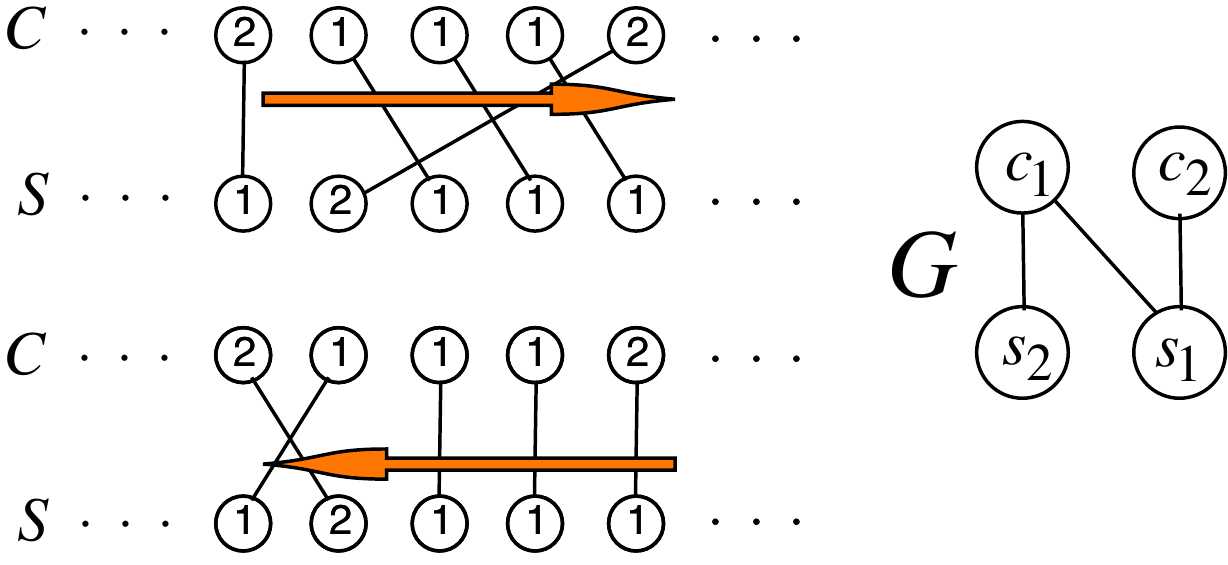}
\end{center}
\caption{FCFS is not preserved when time is reversed}
\label{fig.noreverse}
\end{figure}
The FCFS matching depends on the time direction in which it is constructed.  
The  simple example in Figure \ref{fig.noreverse}
 shows that FCFS is not preserved if the time direction is reversed. 

Nevertheless, this model has an elegant reversibility result.
In this section we introduce the exchange transformation, in which we switch the positions of each matched pair of customer and server.  Figures \ref{fig.match1} and \ref{fig.match2} illustrate the exchange transformation.  We show that the exchanged sequences are independent i.i.d. and that the matching is FCFS in reversed time.

\begin{definition}
Consider a FCFS bipartite matching of sequences $(s^n,c^n)_{n\in T}$.  The exchange transformation of the matched pair $s^n,c^m$, is the matched pair $\ts^m,\tc^n$ where $\ts^m=s^n$ and $\tc^n=c^m$.
\end{definition}

\begin{lemma}
\label{thm.reverse}
Let $A$ be a perfect matching of $c^1,\ldots,c^M$, and $s^1,\ldots,s^M$.  Let $\tc^1,\ldots,\tc^M$, $\ts^1,\ldots,\ts^M$ be the sequences obtained by the exchange transformation, retaining the same links of the matched pairs.  The resulting matching of  $\tc^1,\ldots,\tc^M$, $\ts^1,\ldots,\ts^M$  is the unique FCFS matching in reversed time.
\end{lemma}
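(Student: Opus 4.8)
The plan is to argue directly from Definition~\ref{def.fcfs}, reducing the FCFS-in-reversed-time property of the transformed matching to the fact that $A$ is itself a (forward) FCFS matching and to the injectivity of matchings. The structural parts of the claim come almost for free. The exchange transformation only changes which item occupies each position --- servers move onto the customers line and customers onto the servers line --- while the link joining position $m$ on one line to position $n$ on the other is retained exactly when $(m,n)\in A$. Hence, up to interchanging the roles of the two coordinates, the link set of the transformed matching $\tilde A$ equals that of $A$; in particular $\tilde A$ is functional and injective because $A$ is, it is compatible because $(\tc^n,\ts^m)=(c^m,s^n)\in\E$ whenever $(m,n)\in A$, and it is perfect because $A$ is. Being perfect, $\tilde A$ is automatically a \emph{complete} matching --- there are no unmatched items at all, so the ``no unmatched compatible pair'' clause is vacuous --- and, since the reversed sequences are finite, Lemma~\ref{thm.unique} guarantees that the FCFS matching in reversed time is unique. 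Thus the statement reduces to verifying that $\tilde A$ satisfies the FCFS property when time is read from right to left.

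For this I would introduce the bookkeeping of a matching as a pair of mutually inverse permutations: write $n=\phi(m)$ for the server position matched to customer $c^m$ in $A$ (so $(m,\phi(m))\in A$) and $m=\psi(n)$ for the customer position matched to server $s^n$, so that $\phi$ and $\psi$ are inverse bijections of $\{1,\dots,M\}$. Reversing time turns ``earlier'' into ``larger index'', so the reversed-time FCFS requirement at a link $(m,n)\in A$ of $\tilde A$ amounts to two implications: (R1) if some server $\ts^{m'}$ with $m'>m$ is compatible with $\tc^n$, then $\ts^{m'}$ is matched by $\tilde A$ to some $\tc^{n'}$ with $n'>n$; and (R2) if some customer $\tc^{n'}$ with $n'>n$ is compatible with $\ts^m$, then $\tc^{n'}$ is matched by $\tilde A$ to some $\ts^{m'}$ with $m'>m$. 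Unwinding the identifications $\tc^n=c^m$, $\ts^m=s^n$, $\ts^{m'}=s^{\phi(m')}$ and $\tc^{n'}=c^{\psi(n')}$, condition (R1) reads ``$m'>m$ and $(c^m,s^{\phi(m')})\in\E$ imply $\phi(m')>n$'', and (R2) reads ``$n'>n$ and $(c^{\psi(n')},s^n)\in\E$ imply $\psi(n')>m$''.

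Each of these is then a short contradiction argument using the forward FCFS property of $A$ at the link $(m,n)$. For (R1): if $\phi(m')<n$, then $c^m$ is compatible with the server $s^{\phi(m')}$, which sits strictly before position $n$, so the forward FCFS property forces $s^{\phi(m')}$ to be matched in $A$ to a customer strictly before position $m$; but $s^{\phi(m')}$ is matched to $c^{m'}$ with $m'>m$, and injectivity of $A$ on the server side gives a contradiction, while $\phi(m')=n$ would force $m'=m$. The argument for (R2) is the mirror image, using the customer-side clause of forward FCFS and injectivity of $A$ on the customer side. Once (R1) and (R2) are established $\tilde A$ is FCFS in reversed time, and by the uniqueness recorded above it is \emph{the} FCFS matching in reversed time, which is the assertion of the lemma.

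The one genuine obstacle I anticipate is the reversed-time bookkeeping: correctly identifying which coordinate axis now plays the role of ``customer'' and which that of ``server'' after the swap, recording precisely that the set of links is preserved (only the labels attached to positions change), and translating ``earlier in reversed time'' into ``larger index'' consistently in both FCFS clauses. Once these are pinned down, the two clauses reduce to one application each of the forward FCFS property plus injectivity. A minor point worth making explicit in the write-up is that perfectness of $A$ is genuinely used: it is what makes the completeness clause of the matching definition hold automatically rather than being something to verify.
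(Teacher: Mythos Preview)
Your proposal is correct and follows essentially the same route as the paper: both arguments fix a link $(m,n)\in A$, assume the reversed-time FCFS clause fails at some later position, and derive a contradiction to the forward FCFS property of $A$ at $(m,n)$ via injectivity of the matching. The only difference is cosmetic---you encode the matching by the inverse permutations $\phi,\psi$, whereas the paper tracks the offending pair of positions directly---so the two proofs are the same in substance.
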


\begin{proof}
Denote by $\tilde{A}$ the matching of $\tc^1,\ldots,\tc^M$, $\ts^1,\ldots,\ts^M$ obtained by retaining the links of $A$.  Then $(n,m)\in \tilde{A}$ if and only if $(m,n)\in A$.   Take $(n,m)\in \tilde{A}$, i.e. $\tc^n=c'$ and $\ts^m=s'$ are matched (and of course, $(s',c')\in \E$).   To show that $\tilde{A}$ is FCFS in reversed time, we need to show that:
\begin{eqnarray*}
&& \mbox{if $k >  m,\, (c',\ts^k)\in \E$, then there exists $l > n$ such that $(k,l)\in \tilde{A}$ and } \\
&& \mbox{if $l > n,  \, (\tc^l,s')\in \E$, then there exists $k > m$ such that $(k,l)\in \tilde{A}$  }.
\end{eqnarray*}

Assume to the contrary that there exists $k>m$ such that $\ts^k=s''$, and $(c',s'') \in \E$,  and there is no $l>n$ such that $(k,l)\in \tilde{A}$.  Because this is a complete matching, there exists an $l \le n$ such that $\tc^l$ is matched to   $\ts^k$, and because $\tc^n$ is matched to $\ts^m$, we actually have $l< n$.
So we have that $\ts^k=s''$ is matched to $\tc^l=c''$, where $k>m$ and $l<n$.  
Consider now the original matching.  We have that $c^m=c'$ is matched to $s^n=s'$, and we have $s^l=\ts^k=s''$ and $c^k=\tc^l=c''$ which are matched.  But, $(c',s'')\in \E$, and $l<n$ while $k>m$.  This contradicts the FCFS property of the matching $A$ of  the unexchanged sequences.  A similar contradiction is obtained if we assume the second part of the definition is violated.
The proof is illustrated in the following figure:
\begin{center}
\includegraphics[width=3.5in]{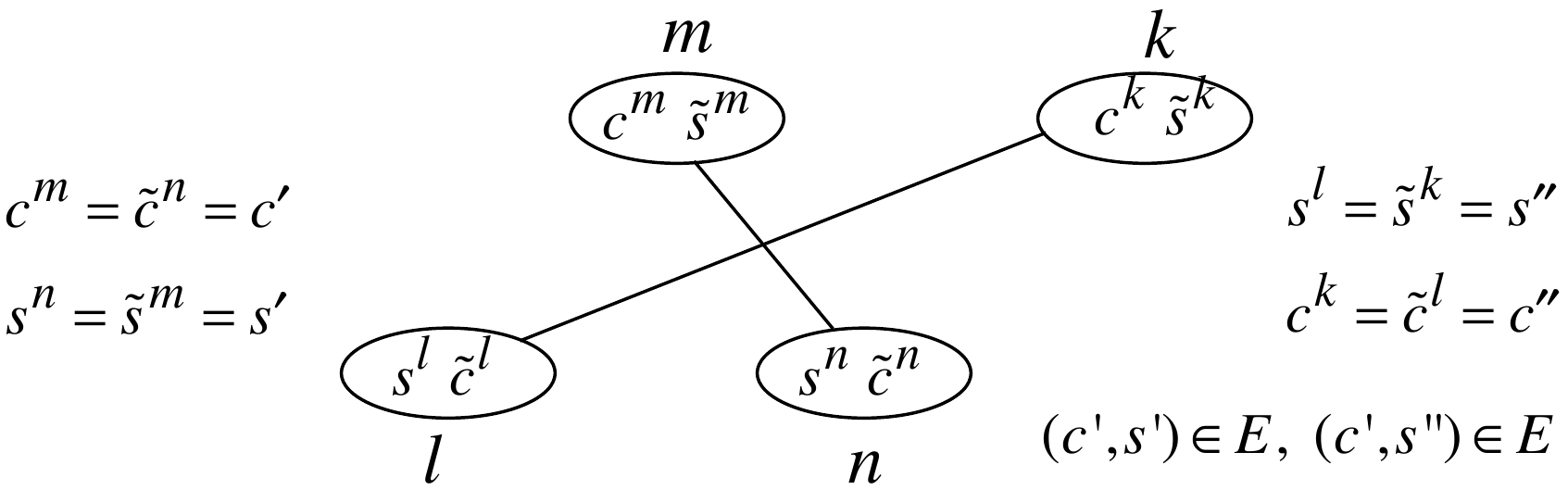}
\end{center}
\end{proof}

\begin{lemma}
\label{thm.blocks}
Consider the FCFS matching of two i.i.d. sequences, and let  $\O_{(m+1,m+M)}$ be the block of  customers and servers for the times   $[m+1,m+M]$.  Then the conditional probability of observing values $\O_{(m+1,m+M)} = \big((c^{m+1},\ldots,c^{m+M})$, $(s^{m+1},\ldots,s^{m+M})\big)$ conditional on the event that the FCFS matching of these values is a perfect match is: 
\begin{eqnarray*}
&& P\Big(\O_{m+1,M+M}= \big((c^{m+1},\ldots,c^{m+M}),\, (s^{m+1},\ldots,s^{m+M})\big) \\ 
&& \qquad \Big| \,
\O_{m+1,M+M} \mbox{ forms a perfect FCFS match} \Big) \\
&& \quad = \kappa_M    \prod_{i=1}^{I}  {\alpha_{c_i}}^{\# c_i} \prod_{j=1}^{J} {\beta_{s_j}}^{\# s_j}
\end{eqnarray*}
where $\kappa_M$ is a constant that may depend on $M$, and $\# c_i$, $\# s_j$ count the number of type $c_i$ customers and type $s_j$ servers in the block.
\end{lemma}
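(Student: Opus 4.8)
The plan is to reduce the statement to a one-line application of Bayes' rule, the crucial point being that the conditioning event is a deterministic function of the block $\O_{m+1,m+M}$ itself. First I would fix notation: for a value $x=\big((c^{m+1},\ldots,c^{m+M}),(s^{m+1},\ldots,s^{m+M})\big)$ of the block, write $\#c_i(x)$ and $\#s_j(x)$ for the numbers of type-$c_i$ customers and type-$s_j$ servers occurring in $x$. Since the sequences $(c^m)$ and $(s^n)$ are independent and each is i.i.d., the unconditional law of the block is a product measure, so
\[
P\big(\O_{m+1,m+M}=x\big)=\prod_{k=1}^{M}\alpha_{c^{m+k}}\cdot\prod_{k=1}^{M}\beta_{s^{m+k}}=\prod_{i=1}^{I}{\alpha_{c_i}}^{\#c_i(x)}\prod_{j=1}^{J}{\beta_{s_j}}^{\#s_j(x)}.
\]

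Next I would make the conditioning event precise. By Lemma~\ref{thm.unique}, the $2M$ items listed in $x$ admit a unique complete FCFS matching; let $\mathcal{F}_M$ denote the (deterministic) set of block values $x$ for which this matching happens to be perfect. Then ``$\O_{m+1,m+M}$ forms a perfect FCFS match'' is exactly the event $\{\O_{m+1,m+M}\in\mathcal{F}_M\}$, which is measurable with respect to the block alone; its probability
\[
p_M:=P\big(\O_{m+1,m+M}\in\mathcal{F}_M\big)=\sum_{x\in\mathcal{F}_M}\prod_{i=1}^{I}{\alpha_{c_i}}^{\#c_i(x)}\prod_{j=1}^{J}{\beta_{s_j}}^{\#s_j(x)}
\]
does not depend on $m$ by stationarity, and it is strictly positive: picking any compatible pair $(c_i,s_j)\in\E$, the block with $c^{m+k}=c_i$ and $s^{m+k}=s_j$ for all $k=1,\ldots,M$ belongs to $\mathcal{F}_M$ (its FCFS matching pairs the $k$-th customer with the $k$-th server), so $p_M\ge(\alpha_{c_i}\beta_{s_j})^M>0$ and the conditioning is well posed.

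Then Bayes' rule finishes it: for $x\in\mathcal{F}_M$,
\[
P\big(\O_{m+1,m+M}=x\mid\O_{m+1,m+M}\in\mathcal{F}_M\big)=\frac{P(\O_{m+1,m+M}=x)}{p_M}=\kappa_M\prod_{i=1}^{I}{\alpha_{c_i}}^{\#c_i(x)}\prod_{j=1}^{J}{\beta_{s_j}}^{\#s_j(x)},
\]
with $\kappa_M:=1/p_M$ a finite positive constant depending only on $M$, while for $x\notin\mathcal{F}_M$ the conditional probability is $0$; this is precisely the asserted identity.

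I do not expect a serious obstacle here. The only step that genuinely uses the FCFS structure --- and the only one needing a moment's care --- is recognizing that ``$\O_{m+1,m+M}$ forms a perfect FCFS match'' is a function of the block values alone, via the deterministic matching of Lemma~\ref{thm.unique}. Once that is granted, conditioning on this event cannot couple the block to the rest of the sequence, nor can it distort the product form of the unconditional law: it merely restricts the support to $\mathcal{F}_M$ and rescales by the block-independent, $m$-independent constant $\kappa_M$.
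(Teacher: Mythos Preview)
Your proof is correct and follows essentially the same route as the paper: both reduce to Bayes' formula, using that the event ``forms a perfect FCFS match'' is a deterministic function of the block values (so the indicator replaces the likelihood term) and that the denominator is a constant $\kappa_M=1/p_M$ depending only on $M$. Your version is in fact slightly more careful, explicitly verifying $p_M>0$ so the conditioning is well defined.
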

\begin{proof}
The conditional probability is calculated using Bayes formula:
\begin{eqnarray*}
&& \quad  P(\mbox{seeing $c^{m+1},\ldots,c^{m+M}, s^{m+1},\ldots,s^{m+M}$ }| \mbox{ having a perfect match}) \\
&& \quad = \frac{P(\mbox{having a perfect match }| \mbox{ seeing $c^{m+1},\ldots,c^{m+M}, s^{m+1},\ldots,s^{m+M}$})}
{P( \mbox{having a perfect match of length $M$})} \\
&& \qquad \times P(\mbox{seeing $c^{m+1},\ldots,c^{m+M}, s^{m+1},\ldots,s^{m+M}$}) \\
&& \quad = \kappa_M \times \mathbf{1}_{\{c^{m+1},\ldots,c^{m+M}, s^{m+1},\ldots,s^{m+M} \mbox{ is a perfect match\}}} 
 \times  \prod_{i=1}^{I}
{\alpha_{c_i}}^{\# c_i} \prod_{j=1}^{J} {\beta_{s_j}}^{\# s_j}
\end{eqnarray*}
where $\kappa_M=1\big/ P( \mbox{having a perfect match of length $M$})$.
\end{proof}

\begin{corollary}
\label{thm.symmetric} 
Let $\O_{m+1,M+M}$ be a FCFS perfectly  matched block, and 
let $\overleftarrow{\O}_{m+1,M+M}$ be obtained from $\O_{m+1,M+M}$ by performing the exchange transformation and time reversal.  Then $\overleftarrow{\O}^M$ is a FCFS perfectly matched block, and 
\[
P(\overleftarrow{\O}_{m+1,M+M}) = P(\O_{m+1,M+M})
\]
\end{corollary}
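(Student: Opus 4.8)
The plan is to read off the claim from the pathwise Lemma \ref{thm.reverse} together with the explicit formula of Lemma \ref{thm.blocks}, the only real content being the observation that the exchange transformation merely \emph{permutes} the two type-sequences.

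First I would verify that $\overleftarrow{\O}_{m+1,m+M}$ is again a perfectly matched FCFS block. Since $\O_{m+1,m+M}$ is a perfect matching of $c^{m+1},\ldots,c^{m+M}$ with $s^{m+1},\ldots,s^{m+M}$, write $s^{\pi(k)}$ for the server matched to $c^k$, where $\pi$ is a permutation of $\{m+1,\ldots,m+M\}$. The exchange transformation puts $c^k$ into position $\pi(k)$ on the server line and $s^{\pi(k)}$ into position $k$ on the customer line, so the exchanged customer sequence $\tc$ is a permutation of $(c^{m+1},\ldots,c^{m+M})$ and the exchanged server sequence $\ts$ is a permutation of $(s^{m+1},\ldots,s^{m+M})$; in particular $\#c_i$ and $\#s_j$ are unchanged for every $i,j$. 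By Lemma \ref{thm.reverse}, retaining the exchanged links yields the unique FCFS matching of $\tc,\ts$ read in reversed time, and it is perfect; reversing the order of the indices then turns this into a perfect FCFS matching read in the usual (forward) time direction, while the reversal of the sequences obviously does not affect type counts either. Hence $\overleftarrow{\O}_{m+1,m+M}$ is a perfectly matched FCFS block of length $M$ with exactly the same counts $\#c_i,\#s_j$ as $\O_{m+1,m+M}$.

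It then remains to apply Lemma \ref{thm.blocks} to each of $\O_{m+1,m+M}$ and $\overleftarrow{\O}_{m+1,m+M}$: both are perfect FCFS blocks of the same length $M$, so the constant $\kappa_M = 1\big/ P(\text{perfect match of length }M)$ is the same in both cases, and since the type counts agree, both conditional probabilities equal $\kappa_M \prod_{i=1}^I \alpha_{c_i}^{\#c_i}\prod_{j=1}^J \beta_{s_j}^{\#s_j}$, giving $P(\overleftarrow{\O}_{m+1,m+M}) = P(\O_{m+1,m+M})$.

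There is no genuine obstacle here; the one point deserving care is the bookkeeping in the middle paragraph — checking that ``exchange followed by time reversal'' genuinely permutes (rather than alters) the two type-sequences and converts ``FCFS in reversed time'' back into ``FCFS in forward time'' — and this is precisely what Lemma \ref{thm.reverse} was designed to supply. Indeed, since the formula of Lemma \ref{thm.blocks} depends on a block only through its multiset of customer and server types, the corollary is essentially a restatement of that type-count invariance.
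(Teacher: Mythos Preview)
Your proposal is correct and follows the same approach as the paper's proof, which simply cites Lemma~\ref{thm.reverse} for the first claim and Lemma~\ref{thm.blocks} for the second. You have spelled out in more detail the key observation underlying the application of Lemma~\ref{thm.blocks}, namely that the exchange transformation merely permutes the type-sequences and hence preserves all type counts $\#c_i,\#s_j$; this is left implicit in the paper's two-line proof but is indeed the point.
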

\begin{proof}
That $\overleftarrow{\O}_{m+1,M+M}$ is a FCFS perfectly matched  block follows from Lemma \ref{thm.reverse} and that $P(\overleftarrow{\O}_{m+1,M+M}) = P(\O_{m+1,M+M})$ follows from Lemma \ref{thm.blocks}.
\end{proof}

We now assume that the system is ergodic, i.e. the Markov chains $O_N,Q^s_N,Q^c_N$ are ergodic (which holds if and only if complete resource pooling holds).
We have shown in Theorem \ref{thm.co-loynes} that for independent i.i.d  sequences of servers and customers over $\bZ$, under ergodicity,  there exists a.s. a unique FCFS matching, which corresponds to the stationary version of the Markov chains associated with  the FCFS matching (generated by the Loynes' construction). 
Let $(c^n,s^n)_{n\in \Z}$ be the sequences of customers and servers, let $A$ be the FCFS matching. Define $A_n = A \cap \{(\cdot,n), (n,\cdot)\}$, that is, $A_n$ gives the matchings of the customer $c^n$ and of the server $s^n$. Consider the quadruple $(O_n,c^n,s^n,A_n)_{n\in \Z}$, 
denote by $\fp=(O_n,c^n,s^n,A_n)_{n\in \Z}$  the sample path corresponding to $(c^n,s^n)_{n\in\Z}$, and by $\mathfrak{P}$  its probability distribution.   Build the exchange transformation on  $\fp$  and consider it in reverse time. We obtain a new quadruple based on the two doubly infinite sequences  of customers and  servers $\tc^n,\ts^m,\,m,n \in \bZ$,  and a matching between them.  Denote the transformed path by $\psi\fp$.  Then $\psi\fp$ retains the links of $\fp$. 
Denote the probability distribution of the new quadruple by $\psi\fP$. Our goal is to prove that 
$\fP=\psi\fP$. 

Let $\mathfrak{P}^0$ be the Palm version of the measure $\mathfrak{P}$ with respect to $(O_n)_{n\in \Z}$, that is, $\mathfrak{P}^0$ is the law of $(O_n,c^n,s^n,A_n)_{n\in \Z}$ conditioned on the event $\{O_0=0\}$. 
A realization of a process of law $\mathfrak{P}^0$ can be obtained by considering a bi-infinite sequence 
$\O=(\O_M)_{M\in \bZ}$ of perfectly matched blocks of i.i.d customers and servers.  Let us perform the exchange transformation and time-reversal on the sequence $\O=(\O_M)_{M\in \bZ}$, and let $\psi\fP^0$ be the probability distribution of the result. Clearly, $\psi\fP$ is the stationary version of $\psi\fP^0$. 
Now according to Lemma \ref{thm.reverse}, we have $\psi\fP^0 =\fP^0$. Since $\psi\fP$ is the stationary version of $\psi\fP^0$ and since $\fP$ is the stationary version of $\fP^0$, we deduce that $\psi\fP=\fP$.

The key point in the argument above is the link between time-stationarity and event-stationarity.  For general background on Palm calculus, see for instance Chapter 1 in \cite{BaBr}. 
So we obtain the following result.

\begin{theorem}
Consider a bipartite matching model under complete resource pooling. 
Let  $(c^n,s^n)_{n\in \bZ}$ be the independent i.i.d. sequences of customers and servers, with the  unique FCFS matching between them.  Then the exchanged sequences $(\tc^n,\ts^n)_{n\in \bZ}$ are independent i.i.d. of the same law as $(c^n,s^n)_{n\in \bZ}$. 
The FCFS matching in reverse time between them (using Loynes' construction in reversed time) consists of the same links as the matching between $(c^n,s^n)_{n\in \bZ}$. 
\end{theorem}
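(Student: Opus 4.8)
The strategy is to turn the informal Palm-calculus argument given just above into a clean derivation, doing all the real work inside a single perfectly matched block and then transporting the resulting invariance back to the time-stationary model. First I would fix notation: under complete resource pooling the pair-by-pair chain $O$ is ergodic by Theorem~\ref{th-ergodic}, hence positive recurrent with finite mean return time to the empty state $0$, and by Theorem~\ref{thm.co-loynes} the FCFS matching $A$ over $\bZ$ is a.s.\ a deterministic functional of $(c^n,s^n)_{n\in\bZ}$; so the stationary quadruple $\fp=(O_n,c^n,s^n,A_n)_{n\in\bZ}$ with law $\fP$ is well defined. As established in the Loynes section, the epochs at which $O$ visits $0$ split $\bZ$ into consecutive \emph{blocks}, each a finite perfectly FCFS-matched segment, no link ever crosses a block boundary, and, conditionally on the point process of these epochs, the blocks are i.i.d., each distributed as in Lemma~\ref{thm.blocks}, with i.i.d.\ lengths of finite mean. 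Equivalently, the Palm measure $\fP^0$ of $\fP$ with respect to $\{n:O_n=0\}$ is the law of a bi-infinite concatenation $\O=(\O_M)_{M\in\bZ}$ of such i.i.d.\ perfect blocks.

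\textbf{Block-level invariance.} Next I would apply the exchange-plus-reversal transformation $\psi$ to $\fP^0$. The exchange transformation merely swaps the two items of each matched pair, and since every matched pair lies within one block it preserves the block partition (up to the reflection induced by the time reversal) and acts independently on each block. On one perfectly FCFS-matched block, Lemma~\ref{thm.reverse} shows that retaining the links after the exchange yields the unique reverse-time FCFS matching, and Corollary~\ref{thm.symmetric} shows that the reversed-exchanged block has the \emph{same} law as the original block. Since the blocks were i.i.d., performing $\psi$ block by block leaves the joint law of the block sequence unchanged, i.e.\ $\psi\fP^0=\fP^0$.

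\textbf{Lifting to stationarity and reading off the conclusion.} The map $\psi$ turns the forward shift into the backward shift and carries the empty-state epochs of $O$ to the empty-state epochs of the pair-by-pair chain of the reversed, exchanged sequences; hence $\psi\fP$ is precisely the time-stationary version of the event-stationary $\psi\fP^0$, just as $\fP$ is the time-stationary version of $\fP^0$ (standard Palm inversion, see Chapter~1 of \cite{BaBr}). Together with $\psi\fP^0=\fP^0$ this gives $\psi\fP=\fP$. Projecting on coordinates, the $(\tc^n,\ts^n)_{n\in\bZ}$ marginal of $\psi\fP=\fP$ is the $(c^n,s^n)_{n\in\bZ}$ marginal, so the exchanged sequences are independent i.i.d.\ of the same law. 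For the links: by construction the matching coordinate of $\psi\fp$ is the link-retaining set, which by Lemma~\ref{thm.reverse} is reverse-time FCFS in each block; concatenating the perfect blocks, and using that complete resource pooling constrains only $(G,\alpha,\beta)$ and hence holds in reversed time so that Theorem~\ref{thm.co-loynes} applies, this set is the unique reverse-time FCFS matching of $(\tc^n,\ts^n)_{n\in\bZ}$, and it consists of exactly the links of $A$.

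\textbf{Main obstacle.} All the pathwise content is already packaged in Lemmas~\ref{thm.monotonicity}--\ref{thm.reverse}, Corollary~\ref{thm.symmetric}, and the block decomposition from the Loynes section, so the only genuine work is the bookkeeping in the lifting step: verifying carefully that $\psi$ intertwines the Palm inversion for $\fP$ with that for $\psi\fP$ --- that the empty-state point process is transported correctly, that the time-reversed i.i.d.\ block sequence is again an object of the same type, and that no measurability or integrability subtlety spoils the stationary$\leftrightarrow$event-stationary correspondence. This is routine Palm calculus but is where attention is needed; everything else follows immediately from the quoted lemmas.
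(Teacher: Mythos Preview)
Your proposal is correct and follows essentially the same approach as the paper: pass to the Palm measure $\fP^0$ at the empty-state epochs, use the i.i.d.\ block decomposition together with Lemma~\ref{thm.reverse} and Corollary~\ref{thm.symmetric} to get $\psi\fP^0=\fP^0$, and then invoke Palm inversion to lift to $\psi\fP=\fP$. If anything, your write-up is more careful than the paper's own terse argument---you correctly cite Corollary~\ref{thm.symmetric} (the paper cites only Lemma~\ref{thm.reverse} at that step), you note explicitly that links never cross block boundaries, and you flag the Palm-inversion bookkeeping as the one place requiring care, which is exactly right.
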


\begin{proof}
That $(\tc^n,\ts^n)_{n\in \bZ},$ are independent i.i.d. sequences follows from the identity of $\psi\fP$ and  $\fP$.  That the Loynes' construction in reversed time will use the same links follows, since the links of $(c^n,s^n)_{n\in \bZ}$ are the FCFS matchings in reversed time between  $(\tc^n,\ts^n)_{n\in \bZ}$, and by Theorem \ref{thm.co-loynes} this matching is unique.
\end{proof}



\section{Stationary Distributions}
\label{sec.stationary}

We have found that for any two independent i.i.d. sequences of customers $C=(c^m)_{m\in\bZ}$ and of servers $S=(s^n)_{n\in\bZ}$,  under complete resource pooling, there is a unique FCFS matching almost surely.  Furthermore, if we exchange every matched pair $(c^m,s^n)$ of customers and servers and retain the matching, we obtain two permuted sequences, of matched and exchanged customers $\tC=(\tc^n)_{n\in\bZ}$, and of matched and exchanged servers $\tS=(\ts^m)_{m\in\bZ}$.  These new sequences are again independent and i.i.d., and the retained matching between them is FCFS in reversed time direction, and it is the unique FCFS matching of $\tC,\,\tS$  in reversed time.

\begin{figure}[htb]
\begin{center}
\includegraphics[scale=0.35]{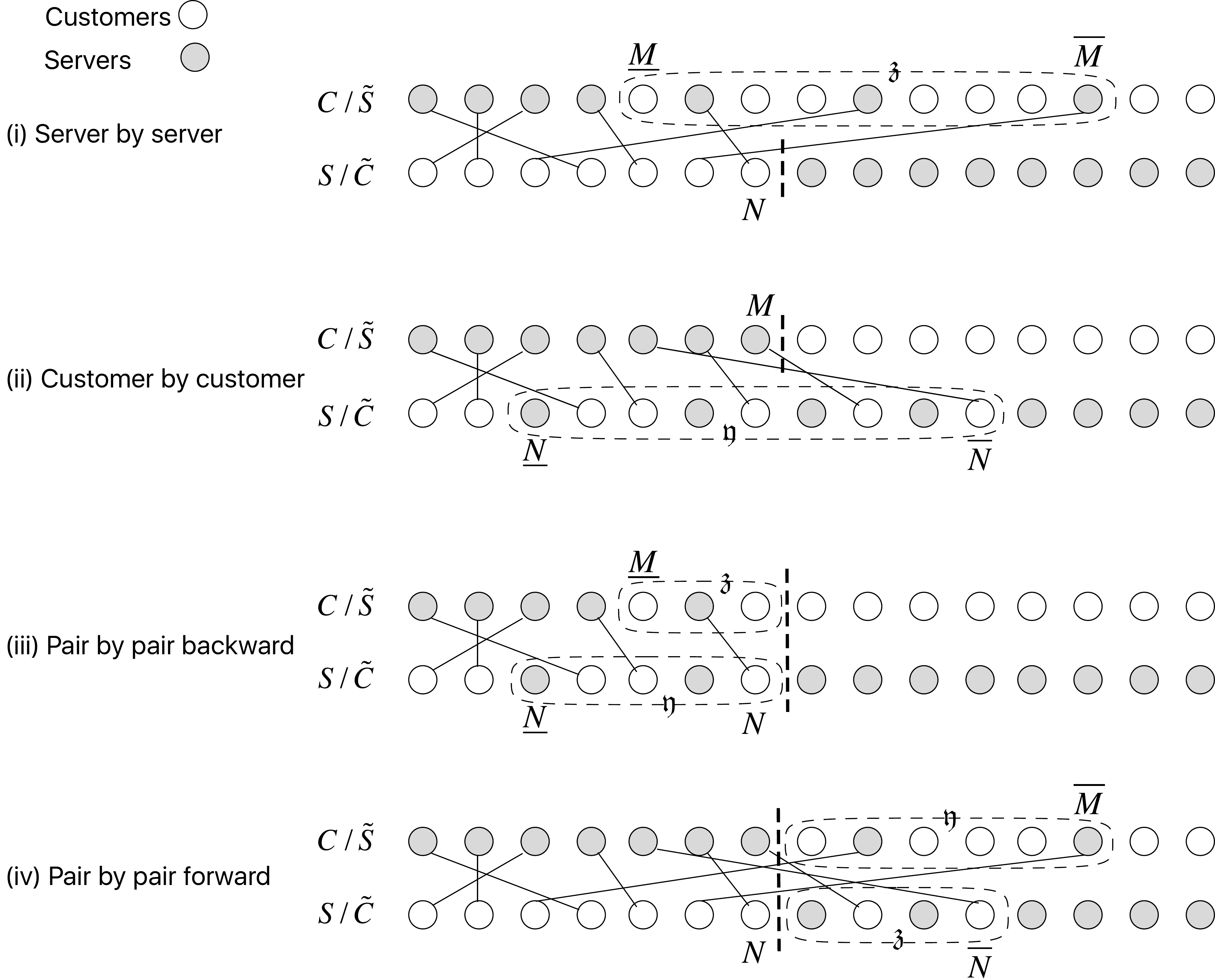}
\end{center}
\caption{Four mechanisms of FCFS matching}
\label{fig.Markov}
\end{figure}

\medskip

In this section we consider the Markovian evolution of the stationary FCFS matching on $\bZ$, and derive stationary distributions of several Markov chains associated with it. 
The FCFS matching of $C,\,S$ evolves moving step by step from the past up to position $N$, where we add matches and perform  exchanges at each step from $N$ to $N+1$.   Four ways in which this can be done are illustrated in Figure \ref{fig.Markov} where light circles represent customers and dark circles represent servers (in original or exchanged positions).   
In each of these, if we reverse the time direction we get a Markovian construction of FCFS matches between $\tC,\, \tS$ that moves from $N+1$ to $N$ and at each of these steps adds matches for elements $\tc^n,\ts^m$ and exchanges them back to $c^m,s^n$.  
We exploit this reversibility to derive the stationary distributions.

The outline of this section is as follows:
In Section \ref{sec.mechanism} we first describe in more details the four mechanisms and define a Markov chain associated with each.  The states of these processes consist of the ordered lists of items in the region encircled by a dashed ellipse in each of the four panels in Figure \ref{fig.Markov}.  We call these the detailed Markov chains.  In Section \ref{sec.reversal} we  formulate  Lemmas \ref{thm.timereversal}, \ref{thm.timereversal2}  on time reversal, that associates each Markov chain in the forward time direction, with a corresponding Markov chain in the reversed time direction.  The proof of these Lemmas is immediate.  

Following this, in Section \ref{sec.distributions},
we use  time reversal to derive  in Theorem \ref{thm.stationary} the stationary distributions of  the detailed Markov chains. These are, up to a normalizing constant, simply the distributions of a finite sequence  of multi-Bernoulli trials.   We also show in Theorem \ref{thm.stationary} that all these distributions possess the same normalizing constant.   

In Section \ref{sec.marginals} we define a Markov chain with an augmented state description, and obtain its stationary distribution as a corollary to Theorem \ref{thm.stationary}.  The advantage of this augmented chain is that its state can be re-interpreted as the state of a queue with parallel servers which is overloaded, as described in \cite{visschers-adan-weiss:12,adan-weiss:11,adan-weiss:14}.  Under this interpretation it is possible to sum over the detailed states and to obtain the stationary distribution of a host of other processes associated with FCFS matching. Furthermore, by summing over all the states we obtain the normalizing constant for the stationary distributions of Theorem \ref{thm.stationary}.  We conjecture that its calculation is $\sharp$-P hard.

Finally, in Section \ref{sec.natural} we again sum over states to obtain the stationary distribution of the `natural' Markov chains.  We illustrate this in Section \ref{sec.example} for the FCFS matching model of the ``NN''-system,
that was only partly analyzed in \cite{caldentey-kaplan-weiss:09}, because its analysis could not be completed at the time.


\subsection{Mechanisms for evolution of FCFS matching and detailed Markov chains}
\label{sec.mechanism}
We consider four mechanisms for the Markovian evolution of the stationary FCFS matching, and define an associated Markov chain for each.
\subsubsection*{Server by server matching}
At time $N$ all servers $s^n,\,n\le N$ have been matched and  exchanged with the customers to which they were matched, as illustrated in panel (i) of Figure \ref{fig.Markov}.  At this point the  servers line has $\tc^n,\,n\le N$ customers that matched and replaced servers $s^n,\,n\le N$, and servers $s^n,\,n>N$ are still unmatched.  On the customers line there is a position $\uM$ such that all the customers $c^m,\,m<\uM$ have been matched and replaced by $\ts^m$, and $c^{\uM}$ is the first unmatched customer, and there is a position $\oM$ such that all customers $c^m,\,m>\oM$ have not yet been matched, and $c^\oM$ is the last customer that has been matched, so that now $\ts^\oM$ is the matched and exchanged server in position $\oM$.  
If the matching for $n\le N$ is perfect then $\oM=\uM-1=N$, otherwise $L=\oM-\uM+1\ge 2$.  We let $\fz=0$ in the former case (sometimes we write $\fz=\emptyset$), and in the latter case we let $\fz=(z^1,\ldots,z^L)$  be the ordered sequence of unmatched customers and of matched and exchanged servers so that $\fz^1=c^\uM$, $\fz^L=\ts^\oM$ and $z^l,\,1<l<L$ is either $c^{\uM+l-1}$ if unmatched or $\ts^{\uM+l-1}$ if matched and exchanged.   

We define the {\em server by server FCFS detailed matching process} $Z^s=(Z^s_N)_{N\in \bZ}$ with $Z^z_N=\fz$.  It is a Markov chain where the transition from $Z^s_N$ to $Z^s_{N+1}$ depends on the current state $\fz$, and on the innovation variables which are the types of server $s^{N+1}$ and of customers $c^m,\,m>\oM$. 

\subsubsection*{Customer by customer matching}
Similar to server by server matching, at time $M$  all customers $c^m,\,m\le M$ have been matched and exchanged with servers, as illustrated in panel (ii) of Figure \ref{fig.Markov}. 
We  define a {\em customer by customer FCFS detailed matching process}, $Z^c=(Z^c_M)_{M\in \bZ}$ 
 so that the state $Z^c_M=\fy$ is $\fy=0$ for perfect match, and otherwise $\fy=(z^1,\ldots,z^L)$ where $z^1=s^{\uN}$ is the first unmatched server on the servers line,  $z^L=\tc^\oN$ is the last matched and exchanged customer, and $z^l,\,1<l<L$  is either $s^{\uN+l-1}$ if unmatched or $\tc^{\uN+l-1}$ if matched and exchanged.  It is a Markov chain where the transition from $Z^c_M$ to $Z^c_{M+1}$ depends on the current state $\fy$, and on the innovation variables which are the types of customer $c^{M+1}$ and of servers $s^n,\,n>\oN$. 

\subsubsection*{Pair by pair backward matching}
For pair by pair backward FCFS matching (illustrated in panel (iii) of Figure \ref{fig.Markov}) we assume that all possible FCFS matches between $s^n,c^m,\,m,n \le N$ have been made and exchanged, and in step $N+1$ we add the pair $s^{N+1},c^{N+1}$, and if possible match and exchange each of them FCFS  to previous unmatched items or to each other.  

We define the pair by pair backwards detailed FCFS matching process $D=(D_N)_{N\in \bZ}$ as $D_N=(\fz,\fy)$, where $\fz=(z^1,\ldots,z^L)$ describes the customers line and $\fy=(y^1,\ldots,y^K)$ describes the server line. Here $z^1$ is the first unmatched customer, in position $N-L+1$, and the remaining items of $\fz$ are either unmatched customers or matched and exchanged servers, $y^1$ is the first unmatched server, in position $N-K+1$, and the  remaining items of $\fy$ are either unmatched servers or matched and exchanged customers.  The number of unmatched customers in $\fz$ needs to be equal to the number of unmatched servers in $\fy$.  We may have $\fz=\fy=0$ if there is a perfect match, otherwise both $L\ge 1$ and $K\ge 1$.  This is a Markov chain, whose next state depends on the current state and the random innovation consists of the types of $s^{N+1},c^{N+1}$.

\subsubsection*{Pair by pair forward matching}
For pair by pair forward FCFS matching (illustrated in panel (iv) of Figure \ref{fig.Markov}) we assume all customers $s^n,c^m,\,m,n \le N$ have been matched and exchanged.  After  step $N$ we consider the pair in position $N+1$, which may contain either items which were matched and exchanged already, or items which are still unmatched, and then in step $N+1$ the items which are still unmatched after step $N$ are  matched and exchanged with each other or with items in positions $> N+1$.

We define the pair by pair forward FCFS matching process $E=(E_N)_{N\in T}$ as $E_N = (\fy,\fz)$, where  $\fy=(y^1,\ldots,y^K)$ lists items in positions $N+1,\ldots,N+K$ on the customer line, where $y^K$ is the last matched and exchanged server $\ts^{N+K}$,  and $y^k,\,1\le k<K$ is either an unmatched customer or a matched and exchanged server in position $N+k$, and where $(z^1,\ldots,z^L)$ lists items in positions $N+1,\ldots,N+L$ on the server line, where $z^L$ is the last matched and exchanged customer $\tc^{N+L}$ and $z^l,\,1\le l<L$ is either an unmatched server, or a matched and exchanged customer in position $N+k$.  $E_N=\emptyset$ after a perfect match, otherwise $K,L \ge 1$. 
$E_N$ is a Markov chain, whose next state depends on the current state, and the random innovation consists of the $c^m,\,m>N+L$ and $s^n,\,n>N+K$.  

It is useful at this point to write down some of the transition probabilities for the detailed Markov chains.  We start with three types of transitions for $Z^s$:
\begin{description}
\item
If $Z_N^s = \fz =( c_{i_0},\ldots,c_i,\ldots,\ts_{j_0})$ and $s^{N+1}=s_j$, such that $c_i$ is the first customer compatible with $s_j$, then $Z_{N+1} = \fz' = (c_{i_0},\ldots,\ts_j,\ldots,\ts_{j_0})$, 
and the probability of this transition is $P(\fz\to \fz') = \beta_{s_j}$.
\item
If $Z_N^s = \fz = (c_i,\ts_{j_1},\ldots,c_{i_0},\ldots,\ts_{j_0})$ and $s^{N+1}$ is compatible with $c_i$, and $c_{i_0}$ is the first unmatched customer following $c_i$, 
then $Z_{N+1} = \fz' = (c_{i_0},\ldots,\ts_{j_0})$, 
and the probability of this transition is $P(\fz\to \fz') = \beta_{\S(c_i)}$.
\item
If $Z_N^s = \fz = (c_{i_0},\ldots,\ts_{j_0})$ and $s^{N+1}=s_j$, and $s_j$ is incompatible with all the unmatched servers in $\fz$, then we may have 
$Z_{N+1}^s = \fz' = (c_{i_0},\ldots,\ts_{j_0},c_{i_1},\ldots,c_{i_k},\ts_j)$, where $c_{i_1},\ldots,c_{i_k}$ are incompatible with $s_j$, with probability: 
$P(\fz\to \fz') = \beta_{s_j} \alpha_{c_{i_1}},\ldots,\alpha_{c_{i_k}} \alpha_{\C(s_j)}$.
\end{description}
The first type of transition of $Z^s$ is a simple exchange of elements, in the second type some elements $z_1,\ldots$ are eliminated, and this may also be a transition to the empty state, and in the third type of transition elements are added after $z_L$.

Transitions for $Z^c$  are similar to those of $Z^s$.

Transitions of the pair by pair process $D$ are  of more types,  we list just two of them, which represent most of the possibilities:
 \begin{description}
\item
If $D_N = (\fz,\fy) = \big((c_{i_0},\ldots,c_i,\ldots),(s_{j_0},\ldots )\big)$, 
and $s^{N+1}=s_k,c^{N+1}=c_l$,  and if $s_k$ is compatible with $c_i$ but not with any earlier unmatched customer in $\fz$, while $c_l$ does not match any of the unmatched servers in $\fy$ then $D_{N+1}=(\fz',\fy') = \big((c_{i_0},\ldots,\ts_k,\ldots,c_l),(s_{j_0},\ldots,\tc_i )\big)$, and the transition probability for this is $P((\fz,\fy)\to (\fz',\fy')) = \alpha_{c_l}\beta_{s_k}$.
\item
If $D_N = (\fz,\fy) = \big((c_{i},\ldots,c_{i_0},\ldots),(s_j,\ldots,s_{j_0},\ldots )\big)$, and $s^{N+1}$ is compatible with $c_i$, and   $c^{N+1}$ is compatible with $s_j$, and the next unmatched customer and server are $c_{i_0},s_{j_0}$, then $D_{N+1}=(\fz',\fy') = \big((c_{i_0},\ldots,\ts_j),(s_{j_0},\ldots,\tc_i )\big)$, and the probability of this transition is 
$P((\fz,\fy)\to (\fz',\fy')) = \alpha_{\C(s_j)} \beta_{\S(c_i)}$.
\end{description}
We see here that in each transition of $D$ a single element is added after $z_K$ and after $y_L$, and sometimes elements $z_1,\ldots$ and/or $y_1,\ldots$ are eliminated.  In the latter case the transition may be to the empty state. 

Transitions of the pair by pair process $E$ are also of several types.  Some of those are similar to transitions of $D$, others are somewhat different. We again list just a few of them, which represent most of the possibilities:
 \begin{description}
\item
If $E_N = (\fy,\fz) = \big((\ts_l,y_1,\ldots,c_i,\ldots,y_k),(s_j,z_1,\ldots,z_l)\big)$, so that $c^{N+1}$ has already been matched and exchanged with $\ts_l$ which was in an earlier position, but $s_j$ has not yet been matched, and $c_i$ is the first compatible unmatched customer in 
$\fy$,  then $E_{N+1}= (\fy',\fz') = \big((y_1,\ldots,\ts_j,\ldots,y_k)$, $(z_1,\ldots,z_l)\big)$, in words, $\ts_j$ replaces $c_i$, and the first elements in $(\fy,\fz)$, are erased.  This transition is deterministic, and happens with probability $P((\fy,\fz)\to (\fy',\fz')) = 1$.
\item
If $E_N = (\fy,\fz) = \big((\ts_l,y_1,\ldots,y_k),(s_j,z_1,\ldots,z_l)\big)$, so that $c^{N+1}$ has already been matched and exchanged with $\ts_l$ which was in an earlier position, but $s_j$ has not yet been matched, and if none of the unmatched customers in $\fy$ are compatible with $s_j$, then $E_{N+1}= (\fy',\fz') = \big((y_1,\ldots,y_k,c_{i_1},\ldots,c_{i_m},\ts_j),(z_1,\ldots,z_l)\big)$, where $s_j$ skipped all of $\fy$ as well as $c_{i_1},\ldots,c_{i_m}$, before finding a match.  The probability of this transition is $P((\fy,\fz)\to (\fy',\fz')) = \alpha_{c_{i_1}}\cdots\alpha_{c_{i_m}}\alpha_{\C(s_j)}$.
\item
If $E_N = \emptyset$, then the next state can be  $E_{N+1} = \emptyset$.  This happens if and only if the next pair, $c^{N+1}=c_i,\,s^{N+1}=s_j$ are compatible.  The probability of this transition is $P((\fy,\fz)\to (\fy',\fz')) = \sum_{(c_i,s_j)\in G} \alpha_{c_i}\beta_{s_j}$
\item
If $E_N = \emptyset$, then the next state can be of the form:  $E_{N+1} = (\fy',\fz') =\big((c_{i_1},\ldots,c_{i_{k-1}},\ts_j)$, $(s_{i_1},\ldots,s_{i_{l-1},}\tc_i)\big)$, $k,l\ge 1$,
where in the transition from $N$ to $N+1$, the new pair is $c^{N+1}=c_i,\,s^{N+1}=s_j$ which are incompatible. In matching them, they are matched and exchanged with some $\ts_{i_l},\tc_{i_k}$, after skipping incompatible  $s_{i_1},\ldots,s_{i_{l-1}}$ and $c_{i_1},\ldots,c_{i_{k-1}}$.  The probability of this transition is: $P((\fy,\fz)\to (\fy',\fz')) =\alpha_{c_i} \beta_{s_j} \beta_{s_{i_1}}\cdots\beta_{s_{i_{l-1}}} \alpha_{c_{i_1}}\cdots\alpha_{c_{i_{k-1}}} \alpha_{\C(s_j)} \beta_{\S(c_i)}$.
\end{description}
We see here that in each transition of $E$, the first element of the state is deleted, and there may be an exchange of elements from $\fy,\fz$, or a geometric number of new elements is added.  Exceptional is the empty state, where transition can be to the empty state, or to a state with a geometric number of new elements in $\fy',\fz'$.


\subsection{Time reversal of the detailed Markov chains}
\label{sec.reversal}

Examining panel (i) of Figure \ref{fig.Markov} we see that it illustrates  FCFS matching and exchange of server and customer lines $C$ and $S$ all the way from $-\infty$ up to position $N$, and at the same time it also illustrates matching and exchange of server and customer lines $\tC$ and $\tS$ FCFS in reversed time, all the way from $\infty$ to $N+1$.    
Our main observation now is that if $Z^s_N=\fz=(z^1,\ldots,z^L)$, then $(z^L,\ldots,z^1)$ is exactly the state of the customer by customer FCFS matching in reversed time  of the sequences $\tC,\,\tS$, when all customers $\tc^n,n\ge N+1$ have been matched to some $\ts^m$, and exchanged back to a customer $c^m$ and server $s^n$.   For $\fz=(z^1,\ldots,z^L)$ we denote $\overleftarrow{\fz}=(z^L.\ldots,z^1)$.   We denote the Markov chain of customer by customer FCFS matching 
of $\tC,\,\tS$ in reversed time by $\overleftarrow{Z}$, so that $\overleftarrow{Z}^c_N$, is the state where all $\tc^n,\,n > N$ have been matched.  
We then state formally (see Figure \ref{fig.reversal1}):
\begin{lemma}[Time reversal]
\label{thm.timereversal}
The Markov chain $\overleftarrow{Z}^c_N$ of customer by customer FCFS matching of $\tC,\,\tS$ in reversed time, is the time reversal of the Markov chain $Z^s_N$ of server by server FCFS matching of $C,\,S$, in the sense that
\begin{equation}
\label{eqn.reversal}
Z^s_N = \fz, \; Z^s_{N+1} = \fz'  \mbox{ if and only if } 
\overleftarrow{Z}^c_{N+1} = \overleftarrow{\fz'}, \;    \overleftarrow{Z}^c_N = \overleftarrow{\fz}.
\end{equation}
This implies that the reversal of the transition  $Z^s_N=\fz \to Z^s_{N+1}=\fz'$ is exactly the  transition
$\overleftarrow{Z}^c_{N+1}=\overleftarrow{\fz'} \to \overleftarrow{Z}^c_N=\overleftarrow{\fz}$.  In other words, if the transition of $Z^s_N \to Z^s_{N+1}$ matches and exchanges $s^n$ with $c^m$, then 
the transition of $\overleftarrow{Z}^c_{N+1} \to \overleftarrow{Z}^c_N$ matches and exchanges $\tc^n$ with $\ts^m$.
\end{lemma}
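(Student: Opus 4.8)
The plan is to read off the statement directly from the dynamic reversibility established in Section~\ref{sec.reversibility}, so that no new computation is required; this is why the paper calls the proof ``immediate''. First I would fix the bi-infinite sequences $(c^n,s^n)_{n\in\bZ}$ under complete resource pooling, their unique FCFS matching $A$ (Theorem~\ref{thm.co-loynes}), and the exchanged sequences $(\tc^n,\ts^n)_{n\in\bZ}$. By the reversibility theorem the exchanged sequences are again independent i.i.d., and, by Lemma~\ref{thm.reverse} together with the uniqueness in Theorem~\ref{thm.co-loynes}, the \emph{reversed-time} FCFS matching of $\tC,\tS$ has exactly the same set of links as the forward matching of $C,S$, namely $A$. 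The entire argument then rests on the fact that both Markov chains in the statement are partial reconstructions of one and the same object --- the fully matched, fully exchanged configuration determined by $A$ --- carried out from opposite ends.

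Next I would make this ``opposite ends'' picture precise as a partition of the links of $A$ by the server index. In the server-by-server construction of $C,S$, the state $Z^s_N=\fz$ is a faithful record of all links $(m,n)\in A$ with $n\le N$: on the customers line it yields the block of already-placed exchanged servers $\ts^m$ left of position $\uM$, the transitional word $\fz=(z^1,\dots,z^L)$ occupying positions $\uM,\dots,\oM$, and the untouched customers $c^m$ right of $\oM$. In the reversed-time customer-by-customer construction of $\tC,\tS$, the state at index $N$ records all links $(m,n)\in A$ with $n> N$ --- since $\tc^n$ is matched precisely when $s^n$ is, via the same link --- and, once the tildes are unwrapped ($\ts^m=s^n$, $\tc^n=c^m$), it again describes the customers line, with the untouched customers on the right, the resolved exchanged servers on the left, and the \emph{same} transitional interval $[\uM,\oM]$ in the middle, but now traversed in the direction of decreasing time, i.e.\ as $(z^L,\dots,z^1)=\overleftarrow{\fz}$. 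Hence $Z^s_N=\fz$ forces $\overleftarrow{Z}^c_N=\overleftarrow{\fz}$ and conversely, which is the $N$-line of (\ref{eqn.reversal}); the same identification at index $N+1$ gives the $N+1$-line. The one point to verify is that the type of each entry of $\fz$ (``unmatched customer'' versus ``matched-and-exchanged server'') is intrinsic to $A$ and hence unaffected by the direction of reconstruction --- immediate, since both properties are assertions about $A$ alone, and the partial FCFS constructions are consistent with $A$ by Lemma~\ref{thm.reverse} and Theorem~\ref{thm.oneside}.

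Finally I would match the transitions. The step $Z^s_N\to Z^s_{N+1}$ does exactly one thing: it introduces $s^{N+1}$, locates its FCFS partner $c^m$ in $A$, and exchanges the pair --- equivalently, it ``resolves the link of position $N+1$''. The step $\overleftarrow{Z}^c_{N+1}\to\overleftarrow{Z}^c_N$ introduces $\tc^{N+1}$, locates its reversed-FCFS partner $\ts^m$, which by Lemma~\ref{thm.reverse} is the same link $(m,N+1)$ of $A$, and unwraps it to $c^m,s^{N+1}$. These two moves act on the same link in opposite time directions, so each is the reversal of the other; combined with the state identification of the previous paragraph this yields (\ref{eqn.reversal}) and the final assertion that the reversed transition matches and exchanges $\tc^n$ with $\ts^m$. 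The only place where any care is needed is checking that resolving positions one at a time --- from the left, in either time direction --- never violates the FCFS conditions of Definition~\ref{def.fcfs}, and this is exactly what the reversibility results of Section~\ref{sec.reversibility} already guarantee, so no genuinely new work remains.
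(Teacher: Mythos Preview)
Your proposal is correct and follows essentially the same approach as the paper: both rest on the single observation that if $s^n$ and $c^m$ are matched in the forward FCFS direction then $\ts^m$ and $\tc^n$ are matched in the reversed FCFS direction, which follows from Lemma~\ref{thm.reverse} together with the a.s.\ uniqueness of the matching (Theorem~\ref{thm.co-loynes}). The paper states this in one line, while you have written out in more detail why this identification of links forces the state correspondence $Z^s_N=\fz \Leftrightarrow \overleftarrow{Z}^c_N=\overleftarrow{\fz}$ and the transition correspondence; the substance is the same.
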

 \begin{figure}[htb]
\begin{center}
\includegraphics[scale=0.30]{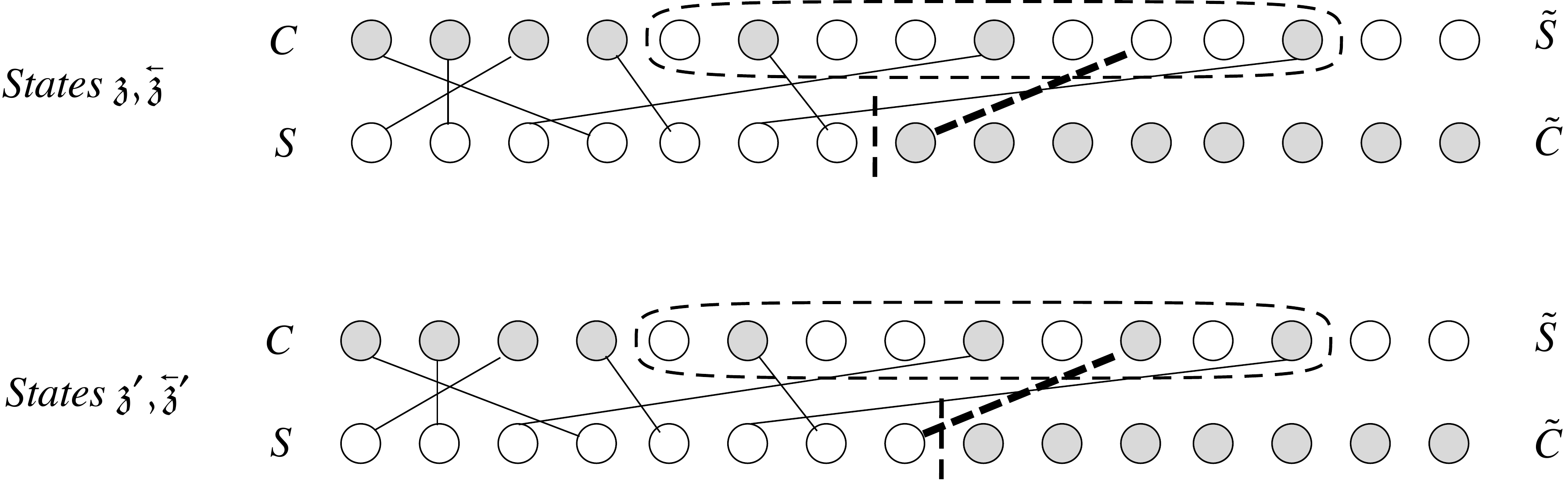}
\end{center}
\caption{Single match and exchange of customer server pair and its reversal}
\label{fig.reversal1}
\end{figure}
\begin{proof}
The only thing to note is that if $s^n,\,c^m$ are matched in the forward FCFS direction, then 
$\ts^m,\,\tc^n$ are matched in the reversed FCFS direction.  This is so by the almost sure uniqueness of the matching as shown in Theorem \ref{thm.co-loynes}, and by Lemma \ref{thm.reverse}.
\end{proof}

A similar observation on time reversal holds also for the pair by pair backward and forward detailed Markov chains.
Examining panel (iii) of Figure \ref{fig.Markov} we again see that it illustrates  FCFS matching and exchange of server and customer lines $C$ and $S$ all the way from $-\infty$ up to position $N$, and at the same time it also illustrates matching and exchange of server and customer lines $\tC$ and $\tS$ FCFS in reversed time, all the way from $\infty$ to $N+1$.    
Our main observation now is that if $Z^s_N=(\fz,\fy)=\big((z^1,\ldots,z^L),(y^1,\ldots,y^K)\big)$, then $\big((y^K,\ldots,y^1),(z^L,\ldots,z^1)\big)=(\overleftarrow{\fy},\overleftarrow{\fz})$ is exactly the state of the pair by pair forward detailed  FCFS matching in reversed time,   of the sequences $\tC,\,\tS$, when all customers and servers $\tc^n,\ts^m,\,m,n > N$ have been matched  and exchanged back to a customer $c^m$ and server $s^n$.     We denote the pair by pair forward detailed FCFS matching 
of $\tC,\,\tS$ in reversed time by $\overleftarrow{E}_N$.  We then state formally (see Figure \ref{fig.reversal2}):

\begin{lemma}[Time reversal]
\label{thm.timereversal2}
The Markov chain $\overleftarrow{E}_N$ of pair be pair forward FCFS matching of $\tC,\,\tS$ in reversed time, is the time reversal of the Markov chain $D_N$ of pair by pair backward FCFS matching of $C,\,S$, in the sense that
\begin{equation}
\label{eqn.reversal3}
D_N = (\fz,\fy), \; D_{N+1} = (\fz',\fy')   \mbox{ if and only if } 
\overleftarrow{E}_{N+1} = (\overleftarrow{\fy'},\overleftarrow{\fz'}), \;
\overleftarrow{E}_N = (\overleftarrow{\fy},\overleftarrow{\fz}).
\end{equation}
This implies that the reversal of the transition  $D_N=\fz \to Z_{N+1}=\fz'$ is exactly the  transition
$\overleftarrow{E}_{N+1}=(\overleftarrow{\fy'},\overleftarrow{\fz'}) \to \overleftarrow{E}_N=(\overleftarrow{\fy},\overleftarrow{\fz})$.  In other words, if the transition of $D_N \to D_{N+1}$ looks for matches for $c^{N+1},s^{N+1}$ and exchanges $s^n$ with $c^m$, then 
the transition of $\overleftarrow{E}_{N+1} \to \overleftarrow{E}_N$ considers the elements in position $N$, and transforms them back.
\end{lemma}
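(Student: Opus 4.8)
The plan is to argue exactly as for Lemma~\ref{thm.timereversal}, the only extra work being the bookkeeping of the two components of the state. The single substantive ingredient is the link correspondence already in hand: by the almost sure uniqueness of the bi-infinite FCFS matching (Theorem~\ref{thm.co-loynes}) together with the exchange--reversibility result of Section~\ref{sec.reversibility} (Lemma~\ref{thm.reverse} and the Theorem following Corollary~\ref{thm.symmetric}), a pair $(s^n,c^m)$ is a link of the forward FCFS matching of $(c^k,s^k)_{k\in\bZ}$ if and only if $(\tc^n,\ts^m)$ is a link of the reversed-time FCFS matching of $(\tc^k,\ts^k)_{k\in\bZ}$; in particular the forward matching of $C,S$ and the reversed-time matching of $\tC,\tS$ consist of the same unordered pairs of positions $\{m,n\}$.

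First I would make explicit which window of positions each state encodes. By definition $D_N=(\fz,\fy)$ records, in increasing order of position, the types of the items on the customer line from the first still unmatched customer up to position $N$ (this is $\fz$) and of the items on the server line from the first still unmatched server up to position $N$ (this is $\fy$), where ``matched'' at step $N$ means matched to a partner in a position $\le N$. Symmetrically, $\overleftarrow{E}_N$, the state at step $N$ of the pair by pair forward matching of $\tC,\tS$ run in reversed time, records in the reversed-time forward order (i.e.\ from $N$ downwards) the items of the two exchanged lines in the analogous window, an item counting as ``matched'' when its reversed-time partner sits in a position $\le N$.

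Next I would invoke the link correspondence to identify these two windows. Since a link $\{m,n\}$ is contained in $(-\infty,N]$ for the forward matching of $C,S$ precisely when the corresponding link of the reversed-time matching of $\tC,\tS$ is contained in $(-\infty,N]$, the set of positions already processed by $D$ at step $N$ coincides with the set already processed by $\overleftarrow{E}$, and the items left in the unprocessed window are literally the same physical items; the exchange transformation merely relabels a matched server on the customer line as some $\tc$ and a matched customer on the server line as some $\ts$, and interchanges which component is the customer list and which the server list, which is exactly the swap built into the definition of $E$ relative to $D$. Reading the window from $N$ downwards therefore turns $\fz$ into $\overleftarrow{\fz}$ and $\fy$ into $\overleftarrow{\fy}$, giving $\overleftarrow{E}_N=(\overleftarrow{\fy},\overleftarrow{\fz})$. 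For the transition, I would observe that $D_N\to D_{N+1}$ introduces the pair in position $N+1$ and resolves the FCFS matches it triggers among positions $\le N+1$, while $\overleftarrow{E}_{N+1}\to\overleftarrow{E}_N$ removes the pair in position $N+1$ and resolves the reversed-time forward matches; by the link correspondence these act on exactly the same links, namely those incident to position $N+1$. Hence $D_N=(\fz,\fy),\ D_{N+1}=(\fz',\fy')$ holds if and only if $\overleftarrow{E}_{N+1}=(\overleftarrow{\fy'},\overleftarrow{\fz'}),\ \overleftarrow{E}_N=(\overleftarrow{\fy},\overleftarrow{\fz})$, which is \eqref{eqn.reversal3}, and the closing sentence of the lemma is a direct rephrasing in terms of which customer--server pair is matched and exchanged.

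The only point requiring care is this bookkeeping: lining up the exact windows of positions the two states record, and checking that the customer/server line interchange in the definition of $E$ relative to $D$ is precisely the one induced by the exchange transformation, so that ``processed within $(-\infty,N]$'' is a transformation-invariant notion. Once the link-level correspondence coming from uniqueness plus reversibility is available there is no analytic content, exactly as in Lemma~\ref{thm.timereversal}, which is why the authors call the proof immediate.
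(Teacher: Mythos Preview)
Your approach is exactly the paper's: its proof is a single sentence noting, as in Lemma~\ref{thm.timereversal}, that if $s^n,c^m$ are matched in the forward FCFS direction then $\ts^m,\tc^n$ are matched in the reversed FCFS direction, this link correspondence coming from Theorem~\ref{thm.co-loynes} and Lemma~\ref{thm.reverse}. Your expanded bookkeeping is along the right lines, but one sentence should be tightened: the set of positions ``already processed'' by $D$ at step $N$ (those $\le N$) does \emph{not} coincide with that of $\overleftarrow{E}$ at step $N$ (those $>N$). What does coincide is the state window itself and the item occupying each position in it: a position $n\le N$ on the server line carries $s^n$ in $D_N$ (unmatched, partner at some $m>N$) exactly when it carries $s^n$ in $\overleftarrow{E}_N$ (exchanged back, since its reversed-time partner $\ts^m$ sits at $m>N$), and carries $\tc^n$ in both processes otherwise; the customer line is analogous. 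With that correction your argument goes through and matches the paper's.
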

 \begin{figure}[htb]
\begin{center}
\includegraphics[scale=0.30]{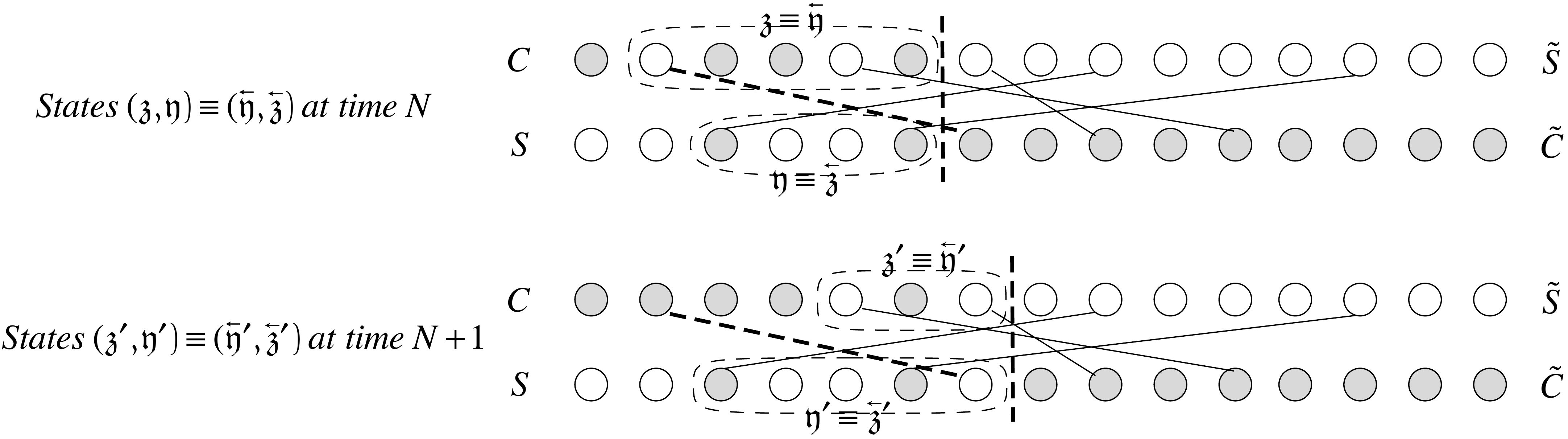}
\end{center}
\caption{Adding pair backward and its reversal adding pair forward}
\label{fig.reversal2}
\end{figure}
\begin{proof}
The only thing to note, as in the proof of Lemma \ref{fig.reversal1}, is that if $s^n,\,c^m$ are matched in the forward FCFS direction, then 
$\ts^m,\,\tc^n$ are matched in the reversed FCFS direction.
\end{proof}


\subsection{Stationary distributions of the detailed Markov chains}
\label{sec.distributions}
We are now ready to derive the stationary distributions of the detailed Markov chains.

\begin{theorem}
\label{thm.stationary}
(i) The stationary distribution of $Z^s_N$ and of $Z^c_M$ is given, up to a normalizing constant, by
\begin{equation}
\label{eqn.stationrydist}
\pi_{Z^s}(\fz)  = \pi_{Z^c}(\overleftarrow{\fz})   = \prod_{i=1}^{I}  {\alpha_{c_i}}^{\# c_i} \prod_{j=1}^{J} {\beta_{s_j}}^{\# s_j}, \qquad
\end{equation}
where $\# c_i$ is the number of customers of type $c_i$,
and  $\# s_j$ is the number of servers of type $s_j$, as they appear in the state $\fz$.   

(ii)  The stationary distribution of $D_N$ and of $E_N$ is given, up to a normalizing constant, by
\begin{equation}
\label{eqn.pairdist}
\pi_D(\fz,\fy) = \pi_E(\overleftarrow{\fy},\overleftarrow{\fz}) =  \prod_{i=1}^{I}  {\alpha_{c_i}}^{\# c_i}  \prod_{j=1}^{J} {\beta_{s_j}}^{\# s_j}, 
\end{equation}
where $\# c_i$,  $\# s_j$ count customers and servers as they appear in the state $(\fz,\fy)$. 

(iii)  The normalizing constant is the same in all four distributions.
\end{theorem}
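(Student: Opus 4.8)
Here is how I would organize the proof.

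The engine for all three parts is the reversed-chain criterion (Kelly's lemma): if $p$ is the transition kernel of an irreducible Markov chain and one exhibits a positive measure $\pi$ together with a \emph{stochastic} kernel $p'$ such that $\pi(x)\,p(x,y)=\pi(y)\,p'(y,x)$ for all states $x,y$, then $\pi$ is $p$-invariant and $p'$ is the kernel of the time-reversed chain. Under complete resource pooling all of our chains are ergodic (Theorem~\ref{th-ergodic}, together with the fact that the associated chains are transient, null recurrent, or ergodic together), so a $p$-invariant measure is automatically finite and, normalized, is the stationary law. The point of Lemmas~\ref{thm.timereversal} and \ref{thm.timereversal2} is that the kernel $p'$ is already identified for us: the time reversal of $Z^s$ is $\overleftarrow{Z}^c$, which — since the exchanged sequences $\tC,\tS$ are again independent i.i.d.\ of the same law (the reversibility theorem of Section~\ref{sec.reversibility}) — is nothing but $Z^c$ with states relabelled $\fz\mapsto\overleftarrow{\fz}$; similarly the time reversal of $D$ is $\overleftarrow{E}$, i.e.\ $E$ with states relabelled $(\fz,\fy)\mapsto(\overleftarrow{\fy},\overleftarrow{\fz})$. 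So for parts (i)--(ii) there is exactly one identity to check per chain.

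For part (i) I would put $\pi(\fz)=\prod_i\alpha_{c_i}^{\#c_i}\prod_j\beta_{s_j}^{\#s_j}$, the counts taken over the list $\fz$, and verify transition type by transition type that
\[
\pi(\fz)\,P_{Z^s}(\fz\to\fz')\;=\;\pi(\fz')\,P_{Z^c}(\overleftarrow{\fz'}\to\overleftarrow{\fz}).
\]
The book-keeping is concrete: a step of $Z^s$ brings in the server $s^{N+1}$ and, if it does not match a customer already present in $\fz$, a batch $c_{i_1},\dots,c_{i_k}$ of freshly exposed customers, then performs one match-and-exchange and, when the lower boundary advances, deletes a run of leading matched-and-exchanged servers; reading the resulting change of the type multiset of the state off the transition formulas of Section~\ref{sec.mechanism} (products of $\beta$'s on the $Z^s$ side, products of $\alpha$'s on the reversed $Z^c$ side, since by Lemma~\ref{thm.timereversal} the reverse step is the one that introduces the customer $\tc^n=c^m$), one finds that the weight ratio $\pi(\fz')/\pi(\fz)$ is precisely the ratio of the two transition probabilities. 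Kelly's lemma then yields that the normalized $\pi$ is stationary for $Z^s$ and that $\overleftarrow{Z}^c$ is its reversal; since a chain and its reversal share a stationary law, $\pi_{Z^c}(\overleftarrow{\fz})=\pi_{Z^s}(\fz)$ with the \emph{same} normalizing constant, which is (\ref{eqn.stationrydist}). Part (ii) is word for word the same with $D,\overleftarrow{E}$ and Lemma~\ref{thm.timereversal2} replacing $Z^s,\overleftarrow{Z}^c$ and Lemma~\ref{thm.timereversal}, using the $D$- and $E$-transition probabilities of Section~\ref{sec.mechanism}; a $D$-step appends one element at the right of each of $\fz,\fy$, so the list surgery is slightly longer but not different in nature, and it gives $\pi_D(\fz,\fy)=\pi_E(\overleftarrow{\fy},\overleftarrow{\fz})$ with a common constant, namely (\ref{eqn.pairdist}). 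In particular the normalizing constant of $Z^s$ equals that of $Z^c$, and that of $D$ equals that of $E$.

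For part (iii) it remains to match the constant of $Z^s$ with that of $D$, and here I would argue through the common empty state. Each of the four chains has the empty state among its states, and as events on $(\C\times\S)^{\bZ}$ one has
\[
\{Z^s_N=\emptyset\}=\{Z^c_N=\emptyset\}=\{D_N=\emptyset\}=\{E_N=\emptyset\},
\]
all of them expressing that every item at a position $\le N$ is matched, and to a partner at a position $\le N$ — equivalently, the matching $A$ restricted to indices $\le N$ is already a perfect matching of $\{c^m:m\le N\}\cup\{s^n:n\le N\}$ (a customer at a position $\le N$ matched across to a server at a position $>N$ would still be unmatched in any of the four states at $N$). By stationarity over $\bZ$ this common event carries one and the same probability $p_0$; and in every one of the four product-form laws the empty state carries the empty product, so its stationary probability equals $1/B$ with $B$ the normalizing constant of that chain. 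Hence $1/B=p_0$ for all four, which is (iii). The only step with real content is the transition-by-transition verification in (i)--(ii): enumerating the few transition types, pairing each forward transition with the correct reverse transition via Lemmas~\ref{thm.timereversal}--\ref{thm.timereversal2}, and keeping exact track of which leading elements are cleared from the state and which batch of items enters. The rest is formal.
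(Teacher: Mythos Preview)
Parts (i) and (ii) of your proposal follow the paper's approach exactly: Kelly's lemma, with the reversed kernel identified via Lemmas~\ref{thm.timereversal}--\ref{thm.timereversal2} and the reversibility theorem of Section~\ref{sec.reversibility}, followed by a transition-type-by-transition-type check of the balance identity. The paper writes out the three transition types for $Z^s$ explicitly and relegates the longer (but analogous) case analysis for $D$/$E$ to an appendix; your sketch is at the right level for a proposal.

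Your part (iii), however, is genuinely different. The paper does \emph{not} argue through the empty state; instead it builds an explicit weight-preserving bijection between the state spaces of $Z^s$ and $D$: given $D_N=\big((z^1,\dots,z^L),(y^1,\dots,y^K)\big)$ one concatenates to $\fz'=(z^1,\dots,z^L,y^K,\dots,y^1)$, checks that this is a legitimate $Z^s$-state (the FCFS incompatibility constraints are inherited), and conversely shows how to cut any $Z^s$-state at the unique position that equalizes the number of unmatched customers on the left with the number of unmatched servers on the right. Since the bijection preserves the type multiset, the unnormalized weights agree termwise, hence so do the partition functions. Your argument---that $\{Z^s_N=\emptyset\}=\{D_N=\emptyset\}=\{E_N=\emptyset\}=\{Z^c_N=\emptyset\}$ as events on the stationary bi-infinite space, so their common probability forces equal normalizers---is shorter and perfectly valid, but it leans on Theorem~\ref{thm.co-loynes} to realize all four chains simultaneously on $(\C\times\S)^{\bZ}$ with their stationary marginals. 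The paper's bijection is more self-contained (purely combinatorial, independent of the Loynes construction) and yields the stronger structural statement that the two state spaces are in weight-preserving bijection, not just that their partition functions coincide. Given the machinery already in place from Sections~\ref{sec.loynes}--\ref{sec.reversibility}, your route is the more economical one.
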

\begin{proof}
We prove (i) using Kelly's Lemma, and the time reversal result.  The proof of (ii) is similar and in included in Appendix \ref{sec.appendix} for completeness.  To show (iii) we show that there is a 1-1 correspondence between states of $Z^s_N$ and $D_N$.

{\em Proof of (i)}

We use Kelly's Lemma (\cite{kelly:79}, Section 1.7):  For a Markov chain $X_t$, if we can find non-negative $\pi(i)$ and   $p_{j\to i}$ such that 
\begin{equation}
\label{eqn.conditions}
\sum_i p_{j\to i} =1 \mbox{ for all $j$}, \; \mbox{ and }\quad 
\pi(i) P(X_{t+1}=j\,|\,X_t=i) = \pi(j) p_{j\to i} \mbox{ for all $i,j$ } 
\end{equation}
then $\pi$ is the stationary distribution of $X_t$, and $p_{j\to i}$ are the transition rates of the reversed stationary process, $p_{j\to i}= P(X_t=i\,|\,X_{t+1}=j)$.

Lemma \ref{thm.timereversal} shows that the  time reversed transition probabilities of $Z^s_N$ are
\begin{equation}
\label{eqn.reversal2}
P(Z^s_N=\fz \,|\,Z^s_{N+1}=\fz')  = P(\overleftarrow{Z}^c_N=\overleftarrow{\fz} \,|\, \overleftarrow{Z}^c_{N+1}=\overleftarrow{\fz'}) = P(Z^c_{M+1} =\overleftarrow{\fz} \,|\, Z^c_M =\overleftarrow{\fz'}).
\end{equation}

The  condition of summation to 1 in (\ref{eqn.conditions}) is satisfied since $P(Z^c_{M+1} =\fy' \,|\, Z^c_M =\fy )$ are transition probabilities. 

To check the  condition of detailed balance in (\ref{eqn.conditions}) all we need to do is check that the conjectured form of $\pi_{Z^s}$ in (\ref{eqn.stationrydist})  satisfies 
\begin{equation}
\label{eqn.checking1}
\pi_{Z^s}(\fz) P(Z^s_{N+1}=\fz' \,|\,Z^s_N=\fz) = 
\pi_{Z^s}(\fz') P(Z^s_n=\fz \,|\,Z^s_{N+1}=\fz') = 
\pi_{Z^c}(\overleftarrow{\fz'}) 
P(Z^c_{M+1} =\overleftarrow{\fz} \,|\, Z^c_M =\overleftarrow{\fz'}).
\end{equation}
There are  three types of transitions of $Z^s$ (see Figure \ref{fig.serverbyserver}), with analogous transitions for $Z^c$ 
 \begin{figure}[htb]
\begin{center}
\includegraphics[scale=0.30]{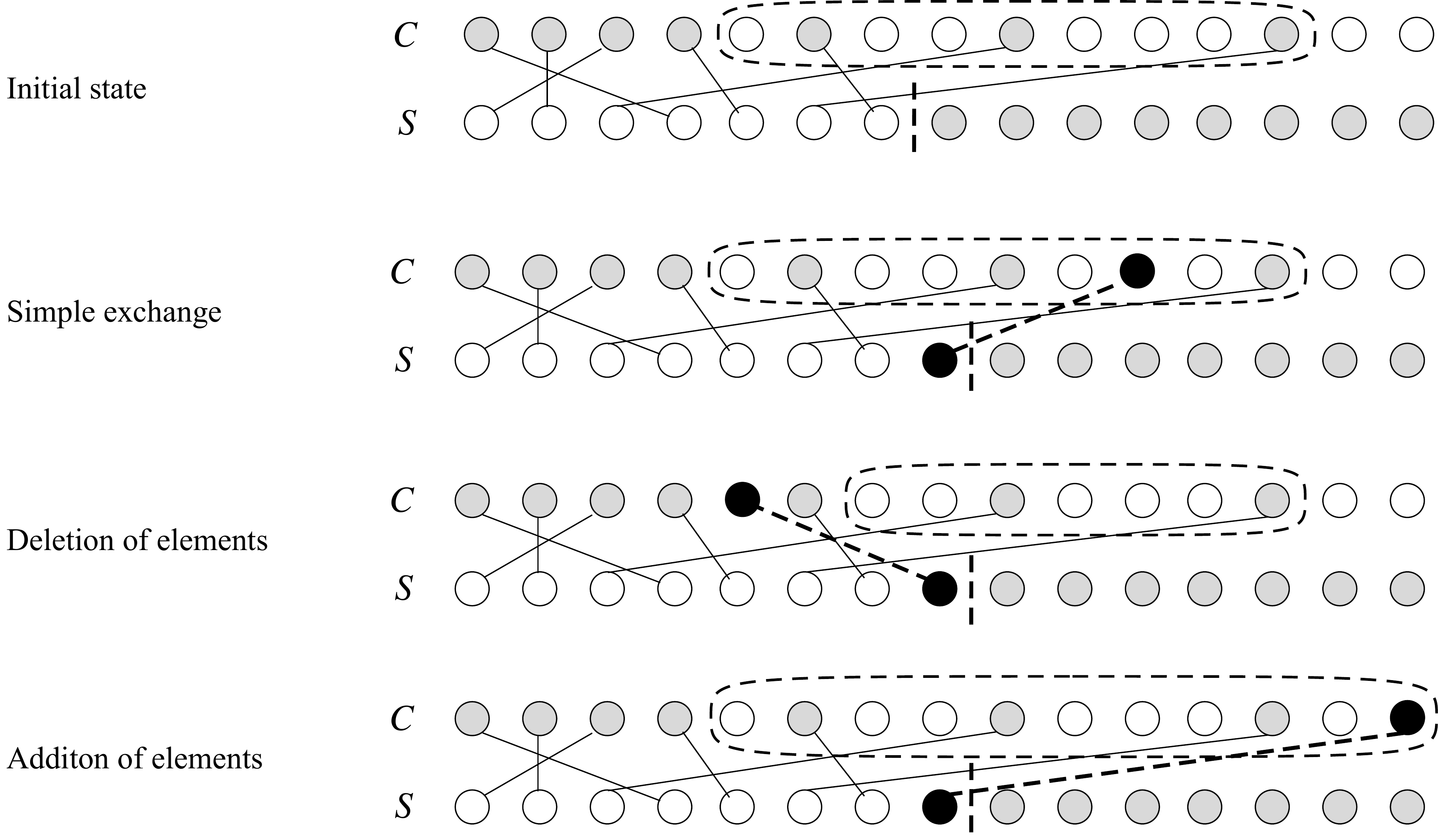}
\end{center}
\caption{Possible transitions of the server by server FCFS Markov chain $Z^s_N$}
\label{fig.serverbyserver}
\end{figure}
We check (\ref{eqn.checking1}) for each of them.

{\em (a) Simple exchange:}  If $z^l = c_i$ and ${z'}^l = s_j$, 
then according to  (\ref{eqn.stationrydist}) we have
\[
\pi_{Z^c}(\overleftarrow{\fz'}) = \pi_{Z^s}(\fz') =  \pi_{Z^s}(\fz)  \frac{\beta_{s_j}}{\alpha_{c_i}} 
\]
and the direct and reversed transition rates are:
\[
P(Z^s_{N+1} = \fz' | Z^s_N = \fz) = \beta_{s_j}, \qquad 
P(Z^c_{M+1} = \overleftarrow{\fz} | Z^c_M = \overleftarrow{\fz'}) = \alpha_{c_i}.
\]

{\em (b) Deletion of elements from start of $\fz$}:  If $\fz = z^1,\ldots,z^L$, and $\fz' = z^{k+1},\ldots,z^L$, and the deleted part is $c_i, s_{j_1}, \ldots, s_{j_{k-1}}$, then according to (\ref{eqn.stationrydist}) we have
\[
\pi_{Z^s}(\fz') =  \pi_{Z^s}(\fz) \frac{1}{\alpha_{c_i} \beta_{s_{j_1}} \cdots \beta_{s_{j_{k-1}}}}
\]
and the direct and reversed transition rates are:
\[
P(Z^s_{N+1} = \fz' | Z^s_N = \fz) = \beta_{\S(c_i)},  \qquad
P(Z^c_{M+1} = \overleftarrow{\fz} | Z^c_M = \overleftarrow{\fz'}) = 
\alpha_{c_i} \beta_{s_{j_1}} \cdots \beta_{s_{j_{k-1}}}  \beta_{\S(c_i)}.
\]
 
 {\em (c) Addition of elements to $\fz$}:   If $\fz=z^1,\ldots,z^l$ and $\fz'=z^1,\ldots,z^l,c_{i_1},\ldots,c_{i_k},s_j$,   then 
 according to (\ref{eqn.stationrydist}) we have
\[
\pi_{Z^s}(\fz') =  \pi_{Z^s}(\fz) \alpha_{c_{i_1}} \cdots \alpha_{c_{i_k}} \beta_{s_j}
\]
and the direct and reversed transition rates are:
\[
P(Z^s_{N+1} = \fz' | Z^s_N = \fz) = \beta_{s_j} \alpha_{c_{i_1}} \cdots \alpha_{c_{i_k}} 
\alpha_{\C(s_j)} ,  \qquad
P(Z^c_{M+1} = \overleftarrow{\fz} | Z^c_M = \overleftarrow{\fz'}) = \alpha_{\C(s_j)} .
\]
It is now immediate to check that  the balance condition of (\ref{eqn.conditions}) holds.

This also proves the form of  $\pi_{Z^c}$, as the reversed process.   

\medskip
{\em Proof of (ii)}  This is similar, and we included it in Appendix \ref{sec.appendix}

\medskip
{\em Proof of (iii)}
%

We show that there is a one to one correspondence between the states of $Z^s_N$ and of $D_N$, such that the stationary probabilities of corresponding states are equal up to the normalizing constants.  This implies equality of the normalizing constants of $\pi_D$ and $\pi_{Z^s}$.  The other processes have the same normalizing constants by the time reversibility.

We note first that a sequence of servers and customers $(z^1,\ldots,z^L)$ is a possible state $\fz$ of $Z^s$ if and only if $z^l=c_i,z^k=s_j,l<k$ implies $(c_i,s_j)\not\in\E$, and similarly it is a possible state $\fy$ of $Z^c$ if and only if $z^l=c_i,z^k=s_j,l > k$ implies $(c_i,s_j)\not\in\E$.  Necessity follows from FCFS, sufficiency follows by constructing sequences  $(c^1,\ldots,c^L)$ and $(s^1,\ldots,s^L)$ whose matching yields $(z^1,\ldots,z^l)$ on the customer line for $\fz$, or on the server line for $\fy$.

To show the correspondence consider a state $D_N=(\fz,\fy)=\big((z^1,\ldots,z^L),(y^1,\ldots,y^K)\big)$.  Then  $\fz'=(z^1,\ldots,z^L,y^K,\ldots,y^1)$ is a possible state of $Z^s$.  We need to show  that any customer (unmatched, or matched and exchanged) in the sequence 
$(z^1,\ldots,z^L,y^K,\ldots,y^1)$ is incompatible with any server (unmatched, or matched and exchanged) that appears later in the sequence.  Consider $z^l=c^l$, i.e. $c^l$ is unmatched.  Then it is incompatible with any $\ts^j=z^j,\,l<j\le L$, because it was skipped by them in the matching of the customer line, and it is incompatible with any of $y^k=s^k$ because all unmatched customers in $\fz$ are incompatible with all unmatched servers in $\fy$.    Consider then $y^k=\tc^k$.  It is incompatible with any $s^j=y^j,\,1\le j <k$, because it was skipped by  $\tc^k$ in the matching of the server line.  This shows the correspondence in one direction.

Consider now a state of $Z^s$, $\fz=(z^1,\ldots,z^K)$.   If we break it into $(z^1,\ldots,z^L)$ and $(y^{K-L}=z^{L+1},\ldots,y^1=z^K)$, we get two subsequences, $\fz'=(z^1,\ldots,z^L)$ and $\fy=(y^1,\ldots,y^{K-L})$.  By the above arguments, any matched and exchanged $\ts$ in $\fz$ cannot be matched to any earlier unmatched customer in the sequence $\fz'$ or to any customer in $\fy$, and any matched and exchanged $\tc$ in $\fy$ cannot be matched to any earlier unmatched server in $\fy$ or to any matched and exchanged server in $\fz'$.  So it remains to choose $L$ in such a way  that the number of unmatched customers in $\fz'$ is equal to  the number of unmatched servers in $\fy$.  We do that as follows: we count the same number of customers $c^l$ from the left of $\fz$ as of servers $\ts^k$ from the right of $\fz$, for as long as $l<k$.  The final pair will either be $l=k-1$ in which case we take $L=l$, or $l < k-1$.  In that case all of $z^{l+1},\ldots,z^{k-1}$ must all be either servers, in which case we let $L=k-1$, or all of them are customers, in which case we let $L=l$.  This shows the correspondence in the other direction.

We note that two corresponding states have the same set of customers and the same set of servers.  Hence their steady state probabilities are equal up to the normalizing constant.  This shows that they have the same normalizing constant, and completes the proof.

\end{proof}


\subsection{Augmented state, marginals and the normalizing constant}
\label{sec.marginals}

The extreme simplicity of the stationary probabilities obtained in Theorem \ref{thm.stationary} is deceptive, since it does not indicate which states are possible, according to the compatibility graph and the FCFS matching policy.  In particular, there seems to be no simple way of deciding what are all the possible states of the four detailed Markov chains.  As a result it is not at all obvious how to calculate the normalizing constant for the distributions (\ref{eqn.stationrydist}), (\ref{eqn.pairdist}).  
To allow us to classify states in a convenient way, and thus to allow us to count them and to add up their stationary probabilities, we define an augmented detailed Markov chain.  It also describes the server by server FCFS matching mechanism, but its states are augmentations of the states of  $Z^s$, in that they are even more detailed.

Consider server by server FCFS matching when all servers up to position $N$ have been matched and exchanged.  For the server by server FCFS detailed matching process we defined $Z^s_N$ as the sequence of elements from position $\uM$ of the first unmatched customer to position $\oM$ of the last matched and exchanged server on the customer line.  We now consider positions $\uN < \uM$ and $\oN> \oM$ such that the interval of positions $\uN$ to $\oN$ contains for each server type at least one matched and exchanged server, and it contains for each customer type at least one unmatched  customer, and  the interval is minimal.   Let $\fz=(z^1,\ldots,z^K)$ where $K=\oN-\uN+1$,  $z^1=\ts^\uN$, $z^K=c^\oN$, and for $\uN<l<\oN$, $z^l$ is either an unmatched customer or a matched and exchanged server in position $\uN+l-1$.
We consider the process $\o Z^s=(\o Z^s_N)_{N\in \bZ}$
where $\o Z^s_N=\fz$.  Note that $\o Z^s_N=\fz$ differs from $Z^s_N$ by the addition of some servers before $c^\uM$ and some customers after $\ts^\oM$.  We always have $K\ge I+J$.

We define also the {\em  server by server FCFS augmented matching process} $\Z=(\Z_N)_{N\in T}$ with state $\fz=(z^1,\ldots,z^L)$ with $L=\oM-\uN+1 \ge J$, which includes elements from positions $\uN$ to $\oM$ on the customer line, starting with $z^1=\ts^\uN$ and ending with $z^L=\ts^\oM$
\begin{corollary}
\label{thm.stationary2}
The stationary distributions of $\o Z^s$ and of $\Z$ are given by
\begin{eqnarray}
\label{eqn.stationrydist3}
\pi_{\o Z^s}(\fz)    &=& B  \prod_{i=1}^{I}  {\alpha_{c_i}}^{\# c_i} \prod_{j=1}^{J} {\beta_{s_j}}^{\# s_j}, \\
\label{eqn.stationrydist4}
\pi_{\Z}(\fz)    &=& B  \prod_{i=1}^{I}  {\alpha_{c_i}}^{\# c_i} \prod_{j=1}^{J} {\beta_{s_j}}^{\# s_j}, 
\end{eqnarray}
where $\# c_i$ is the number of customers of type $c_i$ in $\fz$,
and  $\# s_j$ is the number of servers of type $s_j$ in $\fz$.   
\end{corollary}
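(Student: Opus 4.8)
The plan is to treat the two chains in turn: first obtain the product form for $\o Z^s$ by Kelly's lemma and time reversal, exactly in the spirit of the proof of Theorem~\ref{thm.stationary}(i), and then deduce the statement for $\Z$, together with the equality of normalizing constants, by viewing $\Z$ as a deterministic coarsening of $\o Z^s$ and summing.

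For $\o Z^s$, I would first record that it is a Markov chain whose one‑step transition from $\o Z^s_N=\fz$ is a function of $\fz$ and of fresh innovations only, namely the type of $s^{N+1}$ and the types of the customers immediately to the right of the current window: matching $s^{N+1}$ can only add matched‑and‑exchanged servers and can force the right endpoint $\oN$ to advance (consuming fresh customers) and the left endpoint $\uN$ to retract, but never forces $\uN$ to move left, so the new minimal window is reconstructible from $\fz$ and the innovations. Next I would set up the augmented analogue of the time‑reversal Lemma~\ref{thm.timereversal}: the reversal of $\o Z^s$ is the augmented customer‑by‑customer matching chain of the exchanged sequences $\tC,\tS$ read in reverse time (with state order reversed); this follows as in Lemma~\ref{thm.timereversal} from the a.s.\ uniqueness of the matching, Theorem~\ref{thm.co-loynes}, and the exchange identity, Lemma~\ref{thm.reverse}. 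With the candidate reversed chain identified, I would check the two conditions of Kelly's lemma for the conjectured measure $\pi_{\o Z^s}(\fz)\propto\prod_i\alpha_{c_i}^{\#c_i}\prod_j\beta_{s_j}^{\#s_j}$. Summation to one is automatic because the reversed rates are the genuine transition probabilities of the augmented customer‑by‑customer chain, and the balance identity $\pi_{\o Z^s}(\fz)P(\fz\to\fz')=\pi_{\o Z^s}(\fz')\,p_{\fz'\to\fz}$ is checked transition type by transition type, using that $\pi_{\o Z^s}(\fz')$ is just a product of type factors. Relative to $Z^s$ the only new feature is that a step may create or delete ``padding'' elements at the window ends (matched‑and‑exchanged servers on the left, unmatched customers on the right); the point is that each such element contributes exactly its own factor $\beta_{s_j}$ or $\alpha_{c_i}$ to the state weight \emph{and} to the corresponding forward (or reverse) transition probability, so these contributions cancel and every balance equation collapses to one already verified for $Z^s$. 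This yields $\pi_{\o Z^s}(\fz)=B\prod_i\alpha_{c_i}^{\#c_i}\prod_j\beta_{s_j}^{\#s_j}$ for some constant $B$.

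For $\Z$, observe that $\Z_N$ is obtained from $\o Z^s_N=(z^1,\ldots,z^K)$ by truncating at the last matched‑and‑exchanged server, i.e.\ $\Z_N=(z^1,\ldots,z^L)$ with $z^L=\ts^{\oM}$ — a deterministic function of $\o Z^s_N$ — so the stationary law of $\Z$ is the pushforward of $\pi_{\o Z^s}$ under this map. Given a $\Z$‑state $\fz$, the $\o Z^s$‑states mapping onto it are precisely the concatenations $(\fz,u)$ where $u$ is a string of unmatched customers appended on the right; such a $u$ is subject to no compatibility constraint (it lies after every server in the state), and minimality of the augmented window forces $u$ to be exactly a run of i.i.d.$(\alpha)$ customers stopped at the first moment the customer types missing from $\fz$ have all appeared. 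Hence $\sum_u\prod_{c\in u}\alpha_c=1$ (the coupon‑collector run terminates almost surely), and since $u$ contributes no servers, summing $\pi_{\o Z^s}(\fz,u)=B\big(\prod_i\alpha_{c_i}^{\#c_i}\prod_j\beta_{s_j}^{\#s_j}\big)\prod_{c\in u}\alpha_c$ over $u$ gives $\pi_{\Z}(\fz)=B\prod_i\alpha_{c_i}^{\#c_i}\prod_j\beta_{s_j}^{\#s_j}$ with the same constant $B$.

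I expect the main obstacle to be the case analysis in the Kelly step for $\o Z^s$: the moving window makes the list of transition types longer than for $Z^s$, since a single step can simultaneously match $s^{N+1}$, retract $\uN$, and advance $\oN$, touching the leading and trailing blocks at once, and one must verify in each case that the created and destroyed padding elements carry exactly the factors that make the balance equation reduce to the $Z^s$ identity. A secondary point needing care is pinning down the state spaces of $\o Z^s$ and $\Z$ and the fibers of the truncation map, which is what underlies the coupon‑collector identity used for $\Z$.
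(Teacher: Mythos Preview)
Your approach matches the paper's: Kelly's lemma plus time reversal for $\o Z^s$ (the paper simply says ``similar to the proof of Theorem~\ref{thm.stationary}'' without writing out the case analysis you worry about), then summation over the trailing customer padding to pass to $\Z$ with the same constant. Your coupon-collector description of that tail sum is exactly what the paper means by ``summing over all outcomes of one or more geometric distributions.''

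There is one point you have not addressed. The $B$ in the Corollary is the \emph{same} $B$ already fixed in Theorem~\ref{thm.stationary} for $Z^s,Z^c,D,E$, not merely some normalizing constant; the paper's proof makes this explicit. Your Kelly argument gives $\pi_{\o Z^s}$ only up to an unidentified constant, and your right-tail summation shows that $\o Z^s$ and $\Z$ share that constant, but nothing you wrote ties it back to the constant of $Z^s$. The paper closes this by the mirror of your trailing-sum trick: just as summing $\pi_{\o Z^s}$ over the trailing unmatched customers $c^{\oM+1},\ldots,c^{\oN}$ yields $\pi_{\Z}$, summing also over the leading matched-and-exchanged servers $\ts^{\uN},\ldots,\ts^{\uM-1}$ (a coupon-collector run on the server types missing from the inner $Z^s$-state, which again sums to $1$) collapses $\pi_{\o Z^s}$ all the way down to $\pi_{Z^s}$, forcing the normalizing constants to agree. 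You should add this symmetric summation on the left end of the window.
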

\begin{proof}
The proof of the form of  $\pi_{\o Z^s}(\fz)$, up to the normalizing constant, is similar to the proof of Theorem \ref{thm.stationary}, since the reversed process corresponds to the customer by customer matching of $\tC,\tS$.   

The proof of the form of $\pi_{\Z}(\fz)$ is obtained by summing $\pi_{\o Z^s}(\cdot)$ over all sequences of customers $c^{\oM+1},\ldots,c^\oN$, which sum up to 1, because we are summing over all outcomes of one or more geometric distributions.  

Finally, the proof that the normalizing constant is again equal to $B$ is obtained by summing also over all sequences $\ts^\uN,\ldots,\ts^{\uM-1}$.  Summation over all added servers and customers returns us to $\pi_{Z^s}(\cdot)$.
\end{proof}

The motivation for considering the augmented process $\Z$ is that each state $\Z_N=\fz$ can be written in a different form, and in that form we can actually enumerate all the possible states.  This enables us to obtain stationary distributions of various marginal processes, and finally to derive an explicit expression for the normalizing constant $B$.  We now rewrite the state $\fz=z^1,\ldots,z^L$ as follows:  Let $S_J$ be the type of server $z^L=\ts^\oM$.  Define recursively, for $1 \le j < J$, $S_j$ as the type of the last server in the sequence $z^1,\ldots,z^L$ which is different from $S_{j+1},\ldots,S_J$.  Then $R=(S_1,\ldots,S_J)$ is a permutation of the server types $s_1,\ldots,s_J$.  Let $\fw_1,\ldots,\fw_{J-1}$ be the subsequences of customer and server types between the locations of $S_1,\ldots,S_J$ in $\fz$.  We will then write the state as $\fz=(S_1,\fw_1,\ldots,\fw_{J-1},S_J)$.   The idea of presenting the state in this form stems from  \cite{visschers:00,visschers-adan-weiss:12} and was used in \cite{adan-weiss:11,adan-weiss:14}.

The main feature of $\fz=(S_1,\fw_1,\ldots,\fw_{J-1},S_J)$ is that all the customers in $\fw_\ell$ are of types in $\U(S_1,\ldots,S_\ell)$ and all the servers in $\fw_\ell$ are of types in $\{S_{\ell+1},\ldots,S_J\}$.
Of course we can write the stationary distribution of states of $\Z$, given in (\ref{eqn.stationrydist4}), also as:
\begin{equation}
\label{eqn.stationaryZ}
\pi_{\Z}(S_1,\fw_1,\ldots,\fw_{J-1},S_J) = B \prod_{j=1}^J \beta_{s_j} 
\prod_{\ell=1}^{J-1} \left(
 \prod_{c_i\in \U\{S_1,\ldots,S_\ell\}} \alpha_{c_i}^{\#(c_i,\fw_\ell)}
\prod_{s_j\in \{S_{\ell+1},\ldots,S_J\}} \beta_{s_j}^{\#(s_j,\fw_\ell)} \right),
\end{equation}
where $\#(c_i,\fw_\ell)$, $\#(s_j,\fw_\ell)$ count the number of type $c_i$ customers and of type $s_j$ servers in $\fw_\ell$.  We will use the notation $B^s=B \prod_{j=1}^J \beta_{s_j} $.

We now consider several processes, some of them Markovian and some of them not Markovian, which are derived by aggregating the states of the detailed augmented Markov chain $\Z$.
\begin{description}
\item
The  process $W_N = (S_1,w_1,\ldots,w_{J-1},S_J)$, where  $w_\ell$ is obtained from $\fw_\ell$ by replacing each $c^m$ with a $0$, and each $\ts^n$ with a $1$, so each $w_\ell$ is a sequence of $0$ and $1$.
\item
The process $X_N=(S_1,n_1,\ldots,n_{J-1},S_J)$ where  $n_\ell = \sharp 0(w_\ell)$ is the number of unmatched customers between $S_\ell$ and $S_{\ell+1}$.
\item
The process $Y_N=(S_1,m_1,\ldots,m_{J-1},S_J)$ where  $m_\ell = \sharp 1(w_\ell)$ is the number of matched and exchanged servers between $S_\ell$ and $S_{\ell+1}$.
\item
The process $U_N=(S_1,n_1,m_1,\ldots,n_{J-1},m_{J-1},S_J)$ which records both the number of   unmatched customers and matched and exchanged servers between $S_\ell$ and $S_{\ell+1}$. 
\item
The process $V_N =( S_1,n_1+m_1,\ldots,n_{J-1}+m_{J-1},S_J)$ which records the lengths of the $w_\ell$.
\item
The process $R_N=(S_1,\ldots,S_N)$ which is the  permutation of server types after the $N$th match.
\end{description}

\begin{theorem}
The stationary distributions of $W,X,Y,U,V,R$ are given by:
\begin{eqnarray}
\label{eqn.stationaryW}
\hspace{-0.4in} &&\pi_W(S_1,w_1,\ldots,w_{J-1},S_J) = B^s \prod_{\ell=1}^{J-1} 
  \alpha_{\U\{S_1,\ldots,S_\ell\}}^{\#0(w_\ell)}
 \beta_{\{S_{\ell+1},\ldots,S_J\}}^{\#1(w_\ell)},  \\
\label{eqn.stationaryU}
\hspace{-0.4in} &&\pi_U(S_1,n_1,m_1,\ldots,S_J)   
=  B^s \prod_{\ell=1}^{J-1}  \binom{n_\ell+m_\ell}{n_\ell}
\left(\alpha_{\U\{S_1,\ldots,S_\ell\}}\right)^{n_\ell}
\left(\beta_{\{S_{\ell+1},\ldots,S_J\}}\right)^{m_\ell}, \\
\label{eqn.stationaryX}
\hspace{-0.4in} &&\pi_X(S_1,n_1,\ldots,n_{J-1},S_J) =
B^s \prod_{\ell=1}^{J-1}  
\frac{  \left(\alpha_{\U\{S_1,\ldots,S_\ell\}}\right)^{n_\ell}  }
{  \left(\beta_{\{S_1,\ldots,S_\ell\}}\right)^{n_\ell+1}  }, \\
\label{eqn.stationaryY}
\hspace{-0.4in} &&\pi_Y(S_1,m_1,\ldots,m_{J-1},S_J)  =
B^s \prod_{\ell=1}^{J-1}  
\frac{  \left(\beta_{\{S_{\ell+1},\ldots,S_J\}}\right)^{m_\ell}  }
{  \left(\alpha_{\C\{S_{\ell+1},\ldots,S_J\}}\right)^{m_\ell+1}  }, \\
\label{eqn.stationaryV}
\hspace{-0.4in} && \pi_V(S_1,r_1,\ldots,r_{J-1},S_J)  =
B^s \prod_{\ell=1}^{J-1}  
\left( \alpha_{\U\{S_1,\ldots,S_\ell\}} +  \beta_{\{S_{\ell+1},\ldots,S_J\}}\right)^{r_\ell}, \\
\label{eqn.stationaryR}
\hspace{-0.4in} && \pi_R(S_1,\ldots,S_J)  = 
B^s \prod_{\ell=1}^{J-1}  
\left(  \beta_{\{S_1,\ldots,S_\ell\}}  - \alpha_{\U\{S_1,\ldots,S_\ell\}} \right)^{-1}. 
\end{eqnarray}
\end{theorem}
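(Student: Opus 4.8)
\emph{Proof idea.} The plan is to derive all six laws by iterated aggregation, starting from the stationary law of the augmented chain $\Z$ given in Corollary~\ref{thm.stationary2} and rewritten in block form as~(\ref{eqn.stationaryZ}). Each of $W,X,Y,U,V,R$ is obtained from the state $\fz=(S_1,\fw_1,\ldots,\fw_{J-1},S_J)$ of $\Z$ by a fixed, time-independent operation (replace types by $0/1$ labels block by block; count the zeros; count the ones; count zeros and ones; count block lengths; discard everything but the permutation $R=(S_1,\ldots,S_J)$), so the common one-dimensional law of the process in question is simply the push-forward of $\pi_\Z$ under that operation, i.e.\ the sum of $\pi_\Z$ over the relevant fibre. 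No lumpability hypothesis is needed, since the image of a stationary process under a time-invariant map is again stationary; for the Markovian members of the list this push-forward is then their stationary distribution, and for the non-Markovian ones it is by definition the quantity asked for. The structural fact we use is the one recorded right after~(\ref{eqn.stationaryZ}): in an admissible state the block $\fw_\ell$ ranges over \emph{all} words whose letters are customer types in $\U\{S_1,\ldots,S_\ell\}$ or server types in $\{S_{\ell+1},\ldots,S_J\}$.

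First I would compute $\pi_W$: fixing the $0/1$ pattern $w_\ell$ of each block and summing~(\ref{eqn.stationaryZ}) over all type assignments compatible with that pattern, the sum factorises over positions, a $0$ contributing $\sum_{c_i\in\U\{S_1,\ldots,S_\ell\}}\alpha_{c_i}=\alpha_{\U\{S_1,\ldots,S_\ell\}}$ and a $1$ contributing $\sum_{s_j\in\{S_{\ell+1},\ldots,S_J\}}\beta_{s_j}=\beta_{\{S_{\ell+1},\ldots,S_J\}}$; absorbing the prefactor $B\prod_{j=1}^J\beta_{s_j}$ of~(\ref{eqn.stationaryZ}) into $B^s$ gives~(\ref{eqn.stationaryW}). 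From $\pi_W$ the chains $U$ and $V$ follow by pure counting: summing $\pi_W$ over the $\binom{n_\ell+m_\ell}{n_\ell}$ words with $n_\ell$ zeros and $m_\ell$ ones in block $\ell$ yields~(\ref{eqn.stationaryU}), and summing $\pi_U$ over the pairs $(n_\ell,m_\ell)$ with $n_\ell+m_\ell=r_\ell$ and applying the binomial theorem in each block yields~(\ref{eqn.stationaryV}).

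Next I would obtain $X$, $Y$ and $R$ by marginalising the remaining coordinates. For $X$, sum $\pi_U$ over $m_\ell\ge0$ using $\sum_{m\ge0}\binom{n+m}{n}x^m=(1-x)^{-(n+1)}$ with $x=\beta_{\{S_{\ell+1},\ldots,S_J\}}$ together with the partition identity $\beta_{\{S_1,\ldots,S_\ell\}}+\beta_{\{S_{\ell+1},\ldots,S_J\}}=\beta_{\S}=1$; this gives~(\ref{eqn.stationaryX}), the series converging since $\beta_{S_1}>0$ forces $\beta_{\{S_{\ell+1},\ldots,S_J\}}<1$. Symmetrically, summing $\pi_U$ over $n_\ell\ge0$ produces a factor $(1-\alpha_{\U\{S_1,\ldots,S_\ell\}})^{-(m_\ell+1)}$, which by the defining identity $\overline{\U(S)}=\C(\overline S)$ equals $\alpha_{\C\{S_{\ell+1},\ldots,S_J\}}^{-(m_\ell+1)}$ (positive because $\{S_{\ell+1},\ldots,S_J\}\neq\emptyset$ and $G$ is connected), giving~(\ref{eqn.stationaryY}). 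Finally, summing $\pi_V$ over all $r_\ell\ge0$ is a geometric series of ratio $\alpha_{\U\{S_1,\ldots,S_\ell\}}+\beta_{\{S_{\ell+1},\ldots,S_J\}}$, whose sum $\bigl(1-\alpha_{\U\{S_1,\ldots,S_\ell\}}-\beta_{\{S_{\ell+1},\ldots,S_J\}}\bigr)^{-1}=\bigl(\beta_{\{S_1,\ldots,S_\ell\}}-\alpha_{\U\{S_1,\ldots,S_\ell\}}\bigr)^{-1}$ gives~(\ref{eqn.stationaryR}).

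No step here is deep; the work is bookkeeping. The point that genuinely has to be checked is that every fibre summed over is exactly a Cartesian product — that every $0/1$ pattern, and then every count vector, is realised by a bona fide state of $\Z$ — which is guaranteed by the structural description after~(\ref{eqn.stationaryZ}) and can be made explicit by the same kind of construction of witnessing sequences used in the proof of Theorem~\ref{thm.stationary}(iii). The other delicate item is carrying the complementation identities $\beta_{\{S_1,\ldots,S_\ell\}}=1-\beta_{\{S_{\ell+1},\ldots,S_J\}}$ and $\alpha_{\C\{S_{\ell+1},\ldots,S_J\}}=1-\alpha_{\U\{S_1,\ldots,S_\ell\}}$ correctly through the negative-binomial and geometric sums. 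Complete resource pooling enters at exactly one place: the geometric series defining $\pi_R$ converges iff $\alpha_{\U\{S_1,\ldots,S_\ell\}}+\beta_{\{S_{\ell+1},\ldots,S_J\}}<1$, i.e.\ iff $\alpha_{\U(S)}<\beta_S$ for every nonempty proper $S=\{S_1,\ldots,S_\ell\}$, which is precisely the third form of the pooling condition in Lemma~\ref{le-equiv}.
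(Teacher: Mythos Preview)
Your proposal is correct and follows essentially the same approach as the paper: both derive all six distributions by successive aggregation of $\pi_\Z$ (equation~(\ref{eqn.stationaryZ})), first summing over type assignments to get $\pi_W$, then over $0/1$ patterns to get $\pi_U$, then marginalising via negative-binomial, binomial, and geometric identities to obtain $\pi_X,\pi_Y,\pi_V,\pi_R$. Your write-up is in fact more careful than the paper's, spelling out the complementation identities, the convergence conditions, and the structural point that the fibres are full Cartesian products.
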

\begin{proof}
To obtain (\ref{eqn.stationaryW}) we sum (\ref{eqn.stationaryZ}), where each 0 in $w_\ell$ can be any $c_i\in \U(S_1,\ldots,S_\ell)$, and each 1 in $w_\ell$ can be any $s_j\in \{S_{\ell+1},\ldots,S_J\}$.

To obtain  (\ref{eqn.stationaryU}) we sum (\ref{eqn.stationaryW}) over the different locations of the 0's in each $w_\ell$.

To obtain (\ref{eqn.stationaryX}) we sum (\ref{eqn.stationaryU}) where each $m_\ell$ needs to range from 0 to $\infty$.  We use Newton's binomial formula for $(1-\beta_{\{S_{\ell+1},\ldots,S_J\}})^{-(n_\ell+1)}$, and replace that by $(\beta_{\{S_1,\ldots,S_\ell\}})^{-(n_\ell+1)}$. 

(\ref{eqn.stationaryY}) is obtained similar to (\ref{eqn.stationaryX}) by summing (\ref{eqn.stationaryU}) over the $n_\ell$.

To obtain (\ref{eqn.stationaryV}) we sum (\ref{eqn.stationaryU}), where $n_\ell$ ranges from 0 to $r_\ell=n_\ell+m_\ell$.  We use Newton's binomial formula for   
$\left( \alpha_{\U\{S_1,\ldots,S_\ell\}} +  \beta_{\{S_{\ell+1},\ldots,S_J\}}\right)^{r_\ell}$. 

Finally, to obtain (\ref{eqn.stationaryR}) we sum (\ref{eqn.stationaryV}) over each $r_\ell$ ranging form 0 to $\infty$, and replace $(1-\beta_{\{S_{\ell+1},\ldots,S_J\}})$ by $\beta_{\{S_1,\ldots,S_\ell\}}$.
\end{proof}

We are now ready to obtain the normalizing constant $B$ (see \cite{adan-weiss:11}).  

\begin{theorem}
The normalizing constant $B$ is given by:
\begin{eqnarray}
\label{eqn.B}
B &=&    \left( \prod_{j=1}^J \beta_{s_j}  \times \sum_{(S_1,\ldots,S_J)\in \P_J}   \prod_{\ell=1}^{J-1}  
\left(  \beta_{\{S_1,\ldots,S_\ell\}}  - \alpha_{\U(\{S_1,\ldots,S_\ell\})} \right)^{-1}  \right)^{-1} \\
   &=&   \left(  \prod_{i=1}^I \alpha_{c_i}  \times \sum_{(C_1,\ldots,C_I)\in \P_I}   \prod_{\ell=1}^{I-1}  
\left(  \beta_{\S(\{C_1,\ldots,C_\ell\})}  - \alpha_{\{C_1,\ldots,C_\ell\}} \right)^{-1}   \right)^{-1}.
\end{eqnarray}
where the summation is over all permutations of $s_1,\ldots,s_J$ in the first expression, and over all permutations of $c_1,\ldots,c_I$ in the second expression.
\end{theorem}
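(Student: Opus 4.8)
The plan is to read off both formulas from the fact that the permutation processes introduced in Section~\ref{sec.marginals} have stationary \emph{probability} distributions, so that their masses sum to one.

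First I would use (\ref{eqn.stationaryR}). The process $R_N=(S_1,\ldots,S_J)$ lives on the finite set $\P_J$ of permutations of $s_1,\ldots,s_J$ and is a deterministic function of the ergodic augmented chain $\Z_N$, so the law $\pi_R$ given by (\ref{eqn.stationaryR}) is a genuine probability distribution on $\P_J$. Under complete resource pooling each factor $\beta_{\{S_1,\ldots,S_\ell\}}-\alpha_{\U(\{S_1,\ldots,S_\ell\})}$ is strictly positive (apply the third form of Lemma~\ref{le-equiv} to the nonempty proper subset $\{S_1,\ldots,S_\ell\}\subsetneq\S$), so the sum over $\P_J$ of the products appearing in (\ref{eqn.stationaryR}) is finite and positive. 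Then I would impose $\sum_{R\in\P_J}\pi_R(R)=1$, recall that the prefactor in (\ref{eqn.stationaryR}) is $B^s=B\prod_{j=1}^J\beta_{s_j}$, and solve for $B$; this produces the first displayed expression.

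For the second expression I would invoke the customer--server symmetry of the whole construction. Everything from Section~\ref{sec.mechanism} through Section~\ref{sec.marginals} goes through verbatim after interchanging the roles of customers and servers: the customer-by-customer augmented chain aggregates down to a permutation process $R^c_N=(C_1,\ldots,C_I)\in\P_I$ of customer types, whose stationary law is the mirror image of (\ref{eqn.stationaryR}),
\[
\pi_{R^c}(C_1,\ldots,C_I)=B^c\prod_{\ell=1}^{I-1}\bigl(\beta_{\S(\{C_1,\ldots,C_\ell\})}-\alpha_{\{C_1,\ldots,C_\ell\}}\bigr)^{-1},\qquad B^c=B\prod_{i=1}^I\alpha_{c_i}.
\]
The only point needing care is that the constant here is again the same $B$: summing the stationary distribution of the customer-side augmented chain over its added elements returns $\pi_{Z^c}$, whose normalizing constant equals that of $\pi_{Z^s}$ by part~(iii) of Theorem~\ref{thm.stationary}, namely $B$; passing to the $\Z$-analogue and then to $R^c$ only inserts the factor $\prod_{i=1}^I\alpha_{c_i}$, exactly as in Corollary~\ref{thm.stationary2} and (\ref{eqn.stationaryR}). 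Normalizing $\pi_{R^c}$ over $\P_I$ then yields the second displayed expression.

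I expect the main obstacle to be not any computation---the arithmetic is a one-liner once the stationary laws are in hand---but the bookkeeping that threads the various normalizing constants together: one must check that the server-side and customer-side augmented chains carry the \emph{same} constant $B$ (via $\pi_{\o Z^c}\to\pi_{Z^c}\to\pi_{Z^s}$ together with Theorem~\ref{thm.stationary}(iii)), and that (\ref{eqn.stationaryR}) and its mirror image genuinely are the stationary laws of the finite-state processes $R_N$ and $R^c_N$, so that the ``sums to one'' step is legitimate. As a by-product one obtains for free the nontrivial identity that the two bracketed expressions in the statement are equal, a balance between a sum over $\P_J$ and a sum over $\P_I$; proving that identity directly by inclusion--exclusion would be the alternative, more laborious route.
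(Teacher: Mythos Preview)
Your proposal is correct and follows essentially the same approach as the paper: sum the stationary law $\pi_R$ of (\ref{eqn.stationaryR}) over all permutations in $\P_J$, set the result equal to $1$, and solve for $B$ via $B^s=B\prod_j\beta_{s_j}$; then obtain the second formula by the symmetric customer-by-customer derivation. Your explicit attention to why the customer-side constant is the same $B$ (via Theorem~\ref{thm.stationary}(iii)) is a useful elaboration that the paper leaves implicit.
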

\begin{proof}  
We obtain $B^s$ by summation of (\ref{eqn.stationaryR}) over all permutations of $s_1,\ldots,s_J$, and obtain $B$ by dividing it by  $\prod_{j=1}^J \beta_{s_j}$.  The second expression is obtained by using the symmetric derivation for  customer by customer matching.
\end{proof}

By observing when $B$ is finite we obtain:
\begin{corollary}
A necessary and sufficient condition for ergodicity of all the FCFS matching Markov chains is complete resource pooling (\ref{eqn.pooling}).
\end{corollary}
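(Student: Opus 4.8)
The plan is to recast ergodicity as the summability of the explicit invariant measure $\pi_{\Z}$, and then to read convergence versus divergence off the one--dimensional geometric sums already present in the passage from \eqref{eqn.stationaryU} to \eqref{eqn.stationaryV}--\eqref{eqn.stationaryR}. First I would reduce to the single chain $\Z$: by the discussion preceding Definition \ref{def.ergodic}, all the FCFS matching Markov chains are irreducible, aperiodic, share the common empty state, and are transient, null recurrent or ergodic together, so it suffices to decide ergodicity of $\Z$. The balance--equation verification underlying Corollary \ref{thm.stationary2} (via Kelly's Lemma, exactly as in the proof of Theorem \ref{thm.stationary}) is purely algebraic and does not use normalizability, so $\pi_{\Z}(\fz)=B\prod_{i}\alpha_{c_i}^{\#c_i}\prod_{j}\beta_{s_j}^{\#s_j}$ is a nonnegative invariant measure for the irreducible chain $\Z$. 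By the standard theory of countable Markov chains, an irreducible chain is positive recurrent precisely when it admits a \emph{summable} invariant measure, which is then unique up to scaling; hence $\Z$ is ergodic if and only if $\sum_{\fz}\pi_{\Z}(\fz)<\infty$, i.e.\ if and only if the constant $B$ in \eqref{eqn.B} is finite and strictly positive.

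For the ``if'' direction I would use complete resource pooling in the equivalent third form of Lemma \ref{le-equiv}: $\beta_{S}>\alpha_{\U(S)}$ for every $S$ with $\emptyset\neq S\subsetneq\S$. For $1\le\ell\le J-1$ the set $\{S_1,\ldots,S_\ell\}$ is a nonempty proper subset of $\S$, so each factor $\bigl(\beta_{\{S_1,\ldots,S_\ell\}}-\alpha_{\U(\{S_1,\ldots,S_\ell\})}\bigr)^{-1}$ appearing in \eqref{eqn.B} is a finite strictly positive number; as there are finitely many permutations and $\prod_{j}\beta_{s_j}>0$, the right--hand side of \eqref{eqn.B} is finite and positive. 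Equivalently, every geometric summation carried out in deriving \eqref{eqn.stationaryV} and then \eqref{eqn.stationaryR} converges, so $\pi_{\Z}$ normalizes to a probability measure and $\Z$ is ergodic.

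For the ``only if'' direction, suppose \eqref{eqn.pooling} fails. By Lemma \ref{le-equiv} there is a set $S$ with $\emptyset\neq S\subsetneq\S$ and $\beta_S\le\alpha_{\U(S)}$; note that then $\U(S)\neq\emptyset$ (otherwise $\alpha_{\U(S)}=0<\beta_S$) and $\overline{S}:=\S\setminus S\neq\emptyset$. Choose any permutation $(S_1,\ldots,S_J)$ of the server types whose first $\ell:=|S|$ entries are precisely the elements of $S$. Restrict the sum $\sum_{\fz}\pi_{\Z}(\fz)$ to states $\fz=(S_1,\fw_1,\ldots,\fw_{J-1},S_J)$ of $\Z$ with this permutation, with every block except $\fw_\ell$ held at some fixed admissible value. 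By the structure recalled after \eqref{eqn.stationaryZ}, the block $\fw_\ell$ ranges over all finite words in the customer types of $\U(\{S_1,\ldots,S_\ell\})=\U(S)$ and the server types of $\{S_{\ell+1},\ldots,S_J\}=\overline{S}$; summing $\pi_{\Z}$ over these words contributes the factor $\sum_{r\ge 0}\bigl(\alpha_{\U(S)}+\beta_{\overline{S}}\bigr)^{r}$. Since $\beta_{\overline{S}}=1-\beta_S$, the base equals $\alpha_{\U(S)}+1-\beta_S\ge 1$, so this series diverges; therefore $\sum_{\fz}\pi_{\Z}(\fz)=\infty$, the chain $\Z$ is not positive recurrent, and hence none of the FCFS matching Markov chains is ergodic.

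The one delicate point I anticipate is in the ``only if'' part: I must be sure that the formal divergence of that single geometric series genuinely reflects a divergence of $\sum_{\fz}\pi_{\Z}(\fz)$ over bona fide states, i.e.\ that states of $\Z$ with the prescribed permutation skeleton and an arbitrarily long block $\fw_\ell$ really lie in the state space. This is exactly the structural fact already relied on when deriving \eqref{eqn.stationaryV}--\eqref{eqn.stationaryR} (where $r_\ell$ was summed from $0$ to $\infty$), and it follows from irreducibility together with $\alpha_{c_i},\beta_{s_j}>0$; in a careful write--up I would make it explicit by exhibiting, for the chosen permutation, a sequence of servers and customers whose FCFS matching, after the appropriate step, leaves the block between the markers $S_\ell$ and $S_{\ell+1}$ of any prescribed length. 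Everything else is routine bookkeeping on the product forms established above.
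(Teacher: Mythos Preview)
Your argument is correct and is precisely the paper's approach---the paper's entire proof is the sentence ``By observing when $B$ is finite we obtain''---with the details of both directions (in particular the necessity direction via a divergent geometric block) spelled out carefully. One small fix: $\Z$ has no empty state (every state has length $\ge J$), so it is not directly covered by the ``common empty state'' reduction preceding Definition~\ref{def.ergodic}; run the summability argument on $Z^s$ instead, which does share that empty state, and use Corollary~\ref{thm.stationary2} (same normalizing constant $B$) to evaluate $\sum_{\fz}\pi_{Z^s}(\fz)$ via the $\Z$-parametrization, exactly as the paper does.
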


\begin{corollary}
The conditional distributions of the numbers of unmatched customers and of matched and exchanged servers, given the permutation is a product of geometric probabilities:
\begin{eqnarray}
\label{eqn.geometric-c}
\hspace{-0.5in} && P(X_N=(S_1,n_1,\ldots,n_{J-1},S_J)\,|\, S_1,\ldots,S_J)=
 \prod_{\ell=1}^{J-1}  \left(  \frac{  \alpha_{\U\{S_1,\ldots,S_\ell\}}  }
{ \beta_{\{S_1,\ldots,S_\ell\}}  }\right)^{n_\ell}
  \left(  1  - \frac{\alpha_{\U\{S_1,\ldots,S_\ell\}}}{\beta_{\{S_1,\ldots,S_\ell\}}} \right), \\
\label{eqn.geometric-s}
\hspace{-0.5in} && P(Y_N=(S_1,m_1,\ldots,m_{J-1},S_J)\,|\,S_1,\ldots,S_J )   =  \prod_{\ell=1}^{J-1}  \left(  \frac{  \beta_{\{S_{\ell+1},\ldots,S_J\}}  }
{  \alpha_{\C\{S_{\ell+1},\ldots,S_J\}}  }\right)^{m_\ell}
  \left(  1  - \frac{\beta_{\{S_{\ell+1},\ldots,S_J\}}}{\alpha_{\C\{S_{\ell+1},\ldots,S_J\}}} \right). 
\end{eqnarray}
\end{corollary}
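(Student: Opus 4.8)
The plan is to read off both conditional laws directly from the stationary distributions already established, by dividing the joint law of $X_N$ (respectively $Y_N$) by the marginal law of the permutation $R_N$. First I would observe that $R_N=(S_1,\ldots,S_J)$ is a deterministic function of $X_N=(S_1,n_1,\ldots,n_{J-1},S_J)$, obtained by forgetting the occupation numbers $n_\ell$, and likewise a deterministic function of $Y_N$; and that, summing the geometric-type series in (\ref{eqn.stationaryX}) over $n_1,\ldots,n_{J-1}\ge 0$, one recovers exactly $\pi_R$ of (\ref{eqn.stationaryR}) (this is the very computation that produced (\ref{eqn.stationaryR})), with the analogous statement for $\pi_Y$. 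Hence, for a fixed permutation,
\[
P\bigl(X_N=(S_1,n_1,\ldots,n_{J-1},S_J)\mid S_1,\ldots,S_J\bigr)=\frac{\pi_X(S_1,n_1,\ldots,n_{J-1},S_J)}{\pi_R(S_1,\ldots,S_J)},
\]
and similarly with $Y$ in place of $X$.

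Substituting (\ref{eqn.stationaryX}) and (\ref{eqn.stationaryR}), the common prefactor $B^s$ cancels and the quotient factorizes over $\ell$ into
\[
\frac{\bigl(\alpha_{\U\{S_1,\ldots,S_\ell\}}\bigr)^{n_\ell}}{\bigl(\beta_{\{S_1,\ldots,S_\ell\}}\bigr)^{n_\ell+1}}\,\bigl(\beta_{\{S_1,\ldots,S_\ell\}}-\alpha_{\U\{S_1,\ldots,S_\ell\}}\bigr)=\left(\frac{\alpha_{\U\{S_1,\ldots,S_\ell\}}}{\beta_{\{S_1,\ldots,S_\ell\}}}\right)^{n_\ell}\left(1-\frac{\alpha_{\U\{S_1,\ldots,S_\ell\}}}{\beta_{\{S_1,\ldots,S_\ell\}}}\right),
\]
which is the $\ell$-th factor of (\ref{eqn.geometric-c}). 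For (\ref{eqn.geometric-s}) I would first rewrite $\pi_R$ in the equivalent form $B^s\prod_{\ell=1}^{J-1}\bigl(\alpha_{\C\{S_{\ell+1},\ldots,S_J\}}-\beta_{\{S_{\ell+1},\ldots,S_J\}}\bigr)^{-1}$, using $\beta_{\{S_1,\ldots,S_\ell\}}=1-\beta_{\{S_{\ell+1},\ldots,S_J\}}$ together with the complement identity $\alpha_{\C\{S_{\ell+1},\ldots,S_J\}}=1-\alpha_{\U\{S_1,\ldots,S_\ell\}}$ (immediate from $\U(S)=\C\setminus\C(\S\setminus S)$); dividing (\ref{eqn.stationaryY}) by this expression and simplifying the $\ell$-th factor exactly as above yields (\ref{eqn.geometric-s}).

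Finally, one should check that each factor is a genuine geometric distribution, i.e.\ that the ratios $\alpha_{\U\{S_1,\ldots,S_\ell\}}/\beta_{\{S_1,\ldots,S_\ell\}}$ and $\beta_{\{S_{\ell+1},\ldots,S_J\}}/\alpha_{\C\{S_{\ell+1},\ldots,S_J\}}$ lie in $[0,1)$; this is precisely the third equivalent form $\beta_S>\alpha_{\U(S)}$ of complete resource pooling in Lemma \ref{le-equiv}, applied to the proper nonempty subset $S=\{S_1,\ldots,S_\ell\}$ (equivalently to its complement). I do not anticipate a substantive obstacle: the argument is a one-line division, and the only point requiring care is the bookkeeping with the complement identities relating $\U$, $\C$, $\alpha$ and $\beta$, which is exactly what makes the $X$-version and the $Y$-version compatible with the single permutation marginal $\pi_R$.
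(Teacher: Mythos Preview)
Your proposal is correct and is exactly the intended argument: the paper states this result as a corollary with no written proof, precisely because it follows by dividing $\pi_X$ (respectively $\pi_Y$) in (\ref{eqn.stationaryX})--(\ref{eqn.stationaryY}) by $\pi_R$ in (\ref{eqn.stationaryR}) and simplifying each factor. Your additional care in rewriting $\pi_R$ via the complement identity $\alpha_{\U\{S_1,\ldots,S_\ell\}}=1-\alpha_{\C\{S_{\ell+1},\ldots,S_J\}}$ for the $Y$-case, and in invoking complete resource pooling to guarantee the ratios lie in $[0,1)$, makes explicit what the paper leaves implicit.
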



\subsection{Stationary distribution of the `natural' Markov chains}
\label{sec.natural}

We now consider the `natural' Markov chains $O$ of pair by pair and $Q^s$, $Q^c$ of server by server and customer by customer FCFS matching.  The state consists of the ordered list of unmatched customers and/or servers.

\begin{theorem}
The stationary distributions for $Q^s=(Q^s_N)_{N\in \bZ}$, for $Q^c=(Q^c_N)_{N\in \bZ}$ and for $O=(O_N)_{N\in \bZ}$ are given by
\begin{equation}
\label{eqn.stationaryQs}
\pi_{Q^s}(c^1,\ldots,c^L) = B \big(1- \beta_{\S(\{c^1,\ldots,c^L\})}\big) 
\prod_{\ell=1}^L \frac{\alpha_{c^\ell}}{\beta_{\S(\{c^1,\ldots,c^\ell\})}}, 
\end{equation}
\begin{equation}
\label{eqn.stationaryQc}
\pi_{Q^c}(s^1,\ldots,s^K) = B \big(1- \beta_{\C(\{s^1,\ldots,s^K\})}\big) 
\prod_{k=1}^K \frac{\beta_{s^k}}{\alpha_{\C(\{s^1,\ldots,s^k\})}}. 
\end{equation}
\begin{equation}
\label{eqn.stationaryO}
\pi_{O}(c^1,\ldots,c^L,s^1,\ldots,s^L) = B  
\prod_{\ell=1}^L  \frac{\alpha_{c^\ell}}{\beta_{\S(\{c^1,\ldots,c^\ell\})}}
\frac{\beta_{s^\ell}}{\alpha_{\C(\{s^1,\ldots,s^\ell\})}}.
\end{equation}
\end{theorem}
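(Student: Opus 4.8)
The plan is to derive each of the three formulas by aggregating the stationary distribution of one of the detailed Markov chains from Theorem~\ref{thm.stationary}, exactly in the spirit of the proof in Section~\ref{sec.marginals}. The key observation is that the `natural' chain $Q^s$ records only the ordered list of \emph{unmatched} customers, so its state is obtained from a state $\fz$ of the detailed server-by-server chain $Z^s$ by deleting all the matched-and-exchanged servers $\ts^n$. Thus $\pi_{Q^s}(c^1,\ldots,c^L)$ should equal the sum of $\pi_{Z^s}(\fz)$ over all detailed states $\fz$ whose customer-subsequence is exactly $(c^1,\ldots,c^L)$. First I would characterize these $\fz$: by the feasibility criterion already established in the proof of Theorem~\ref{thm.stationary}(iii), a sequence is a legal state of $Z^s$ iff no customer is compatible with any server appearing later. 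Given that $(c^1,\ldots,c^L)$ is fixed as the subsequence of customers, a server $\ts^n$ inserted between $c^\ell$ and $c^{\ell+1}$ (or after $c^L$) is admissible precisely when its type is incompatible with all of $c^1,\ldots,c^\ell$, i.e. lies in the complement of $\S(\{c^1,\ldots,c^\ell\})$; and the number of such inserted servers in each gap is an arbitrary nonnegative integer, placed in order.

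Second, I would carry out the summation. In the gap after $c^\ell$ (for $1\le \ell\le L-1$), the contribution is a geometric-type sum $\sum_{k\ge 0}\big(\beta_{\overline{\S(\{c^1,\ldots,c^\ell\})}}\big)^k = \big(1-\beta_{\overline{\S(\{c^1,\ldots,c^\ell\})}}\big)^{-1} = \beta_{\S(\{c^1,\ldots,c^\ell\})}^{-1}$, using that probabilities sum to one so $\beta_{\overline{\S(\{c^1,\ldots,c^\ell\})}} = 1-\beta_{\S(\{c^1,\ldots,c^\ell\})}$. The final gap, after $c^L$, truncates at the first compatible server, so it contributes $\sum_{k\ge 0}\big(\beta_{\overline{\S(\{c^1,\ldots,c^L\})}}\big)^k \beta_{\S(\{c^1,\ldots,c^L\})}\cdot(\text{the closing server, which is the last }\ts^{\oM})$ — more precisely one must be careful that $Z^s$'s state ends at the last matched-and-exchanged server, so the closing element is forced and one sums only over the skipped servers and the closing type jointly, giving the factor $\big(1-\beta_{\S(\{c^1,\ldots,c^L\})}\big)\cdot\big(1-\beta_{\overline{\S(\{c^1,\ldots,c^L\})}}\big)^{-1}\cdot\beta_{\S(\ldots)}^{-1}$ after telescoping, which collapses to the displayed prefactor $B\big(1-\beta_{\S(\{c^1,\ldots,c^L\})}\big)$. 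Multiplying by $\prod_\ell \alpha_{c^\ell}$ from (\ref{eqn.stationrydist}) yields (\ref{eqn.stationaryQs}). Equation (\ref{eqn.stationaryQc}) follows by the symmetric argument, summing $\pi_{Z^c}$ over inserted matched-and-exchanged customers, and using $\alpha$ in place of $\beta$ and $\C(\cdot)$ in place of $\S(\cdot)$.

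For (\ref{eqn.stationaryO}), I would aggregate the pair-by-pair backward detailed chain $D$: the natural chain $O$ records the list $(c^1,\ldots,c^L)$ of unmatched customers on one line and $(s^1,\ldots,s^L)$ of unmatched servers on the other, so it is obtained from a state $(\fz,\fy)$ of $D$ by deleting the matched-and-exchanged servers from $\fz$ and the matched-and-exchanged customers from $\fy$. The admissible insertions are, as above, servers in the $\ell$-th gap of $\fz$ with type in $\overline{\S(\{c^1,\ldots,c^\ell\})}$ and customers in the $k$-th gap of $\fy$ with type in $\overline{\C(\{s^1,\ldots,s^k\})}$, with no closing-element complication here since $D$'s state ends at an \emph{unmatched} item on each line. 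Summing the geometric series gap by gap produces $\prod_{\ell=1}^L \beta_{\S(\{c^1,\ldots,c^\ell\})}^{-1}\cdot\prod_{k=1}^L \alpha_{\C(\{s^1,\ldots,s^k\})}^{-1}$, which combined with the $\prod \alpha_{c^\ell}\beta_{s^\ell}$ from (\ref{eqn.pairdist}) gives (\ref{eqn.stationaryO}); equality of the normalizing constant $B$ across all these chains is inherited from Theorem~\ref{thm.stationary}(iii) together with the fact that a full sum over all inserted elements returns us to the detailed distribution.

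The main obstacle I anticipate is the bookkeeping at the \emph{right end} of the $Z^s$ and $Z^c$ states — the fact that the detailed state terminates at the last matched-and-exchanged server (resp.\ customer) rather than at an unmatched item means the last ``gap'' behaves differently from the interior gaps, and one must verify that the extra $\big(1-\beta_{\S(\{c^1,\ldots,c^L\})}\big)$ factor in (\ref{eqn.stationaryQs})--(\ref{eqn.stationaryQc}) is exactly what comes out, rather than getting an off-by-one in the product range or a spurious leftover factor. A clean way to handle this is to instead start from the \emph{augmented} chain $\o Z^s$ of Corollary~\ref{thm.stationary2}, whose interior structure is uniform, and track carefully which positions get summed out; but the direct computation from $Z^s$ should also work with attention to that boundary term. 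Everything else is routine geometric summation and use of $\sum$-to-one identities.
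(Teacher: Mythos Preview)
Your approach is the same as the paper's: aggregate $\pi_{Z^s}$ over all detailed states with a prescribed subsequence of unmatched customers to get $\pi_{Q^s}$, do the symmetric thing for $\pi_{Q^c}$, and aggregate $\pi_D$ to get $\pi_O$. The feasibility criterion you quote (a server inserted after $c^\ell$ must lie in $\overline{\S(\{c^1,\ldots,c^\ell\})}$) is exactly right, and the geometric summations are the same as in the paper.

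Two points where your bookkeeping goes astray, though the final formulas survive:

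\textbf{The last gap of $Z^s$.} There is no ``first compatible server'' involved. Every matched-and-exchanged server appearing in $\fz$ after $c^L$ must be \emph{incompatible} with all of $c^1,\ldots,c^L$ (by the very feasibility criterion you stated), including the terminal element $\ts^{\oM}$. The only special feature of the last gap is that the state of $Z^s$ is defined to end with a server, so this gap has length $j_L\ge 1$ rather than $j_L\ge 0$. Hence its contribution is simply
\[
\sum_{j_L\ge 1}\big(1-\beta_{\S(\{c^1,\ldots,c^L\})}\big)^{j_L}
=\frac{1-\beta_{\S(\{c^1,\ldots,c^L\})}}{\beta_{\S(\{c^1,\ldots,c^L\})}},
\]
which, combined with the interior-gap factors $\prod_{\ell=1}^{L-1}\beta_{\S(\{c^1,\ldots,c^\ell\})}^{-1}$ and the $\prod_\ell\alpha_{c^\ell}$, gives (\ref{eqn.stationaryQs}) directly. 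No telescoping or ``closing type'' argument is needed.

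\textbf{The endpoints of $D$.} The state $(\fz,\fy)$ of $D_N$ does \emph{not} end at an unmatched item: $\fz$ runs from the first unmatched customer to position $N$, and the element in position $N$ can be either an unmatched customer or a matched-and-exchanged server (and likewise for $\fy$). So there \emph{is} a trailing gap after $c^L$ (and after $s^L$), but here it ranges over $j_L\ge 0$, which is why the product in (\ref{eqn.stationaryO}) runs over $\ell=1,\ldots,L$ with no extra prefactor. Your displayed product is correct; only the stated reason is wrong.
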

\begin{proof}
To prove (\ref{eqn.stationaryQs}), we need to sum $\pi_{Z^s}=\fz$ over all states $\fz$ of the form:
\[
c^1,s^{1,1},\ldots,s^{1,j_1},c^2,\ldots,c^\ell,s^{\ell,1},\ldots,s^{\ell,j_\ell},\ldots,c^L,s^{L,1},\ldots,s^{L,j_L},
\] 
where for $1\le \ell <L$ the range of $j_\ell$ is $0 \le j_\ell < \infty$ and for $j_L$ the range is $1 \le j_L < \infty$, 
and where each of $s^{\ell,m}$ ranges over possible values $s_j \not\in \S(\{c^1,\ldots,c^\ell\})$. 
We therefore have, by (\ref{eqn.stationrydist}), that 
\begin{eqnarray}
\label{eqn.summation}
&& \pi_{Q^s}(c^1,\ldots,c^L) = B \sum 
\alpha_{c^1}\beta_{s^{1,1}}\cdots\beta_{s^{1,j_1}}\alpha_{c^2}\cdots\alpha_{c^\ell}
\beta_{s^{\ell,1}}\cdots\beta_{s^{\ell,j_\ell}}\cdots\alpha_{c^L}\beta_{s^{L,1}}\cdots\beta_{s^{L,j_L}} 
\nonumber\\
&& \qquad = B \sum
\alpha_{c^1}\big(1- \beta_{\S(c^1)}\big)^{j_1} \alpha_{c^2} \cdots \alpha_{c^\ell}
\big(1 - \beta_{\S(\{c^1,\ldots,c^\ell\})}\big)^{j_\ell} \cdots \alpha_{c^L} \big(1 - \beta_{\S(\{c^1,\ldots,c^L\})}\big)^{j_L}   \nonumber \\
&& \qquad = B \frac{\alpha_{c^1}}{\beta_{\S(c^1)}} \cdots \frac{\alpha_{c^\ell}}{\beta_{\S(\{c^1\ldots,c^\ell\})}}
\cdots \frac{\alpha_{c^\ell}}{\beta_{\S(\{c^1\ldots,c^\ell\})}} 
(1 - \beta_{\S(\{c^1,\ldots,c^L\})}),  
\end{eqnarray}
which proves formula (\ref{eqn.stationaryQs}).  Formula (\ref{eqn.stationaryQc}) is proved in the same way.

To prove (\ref{eqn.stationaryO}), we need to sum $\pi_D=(\fz,\fy)$ over all states $(\fz,\fy)$ of the form:
\[
c^1,s^{1,1},\ldots,s^{1,j_1},c^2,\ldots,c^L,s^{L,1},\ldots,s^{L,j_L}
s^1,c^{1,1},\ldots,c^{1,k_1},s^2,\ldots,s^L,c^{L,1},\ldots,c^{L,k_L},
\] 
where for $1 \le \ell \le L$ the range of $j_\ell,k_\ell$ is $0 \le j_\ell,k_\ell < \infty$, and 
where each of $s^{\ell,m}$ ranges over possible values $s_j \not\in \S(\{c^1,\ldots,c^\ell\})$, and
each of $c^{\ell,m}$ ranges over possible values $c_i \not\in \C(\{s^1,\ldots,s^\ell\})$.
Performing the summation as in (\ref{eqn.summation}) we obtain (\ref{eqn.stationaryO}).
\end{proof}


\subsubsection{An example: the "NN" system}
\label{sec.example}
We consider the ``NN'' system, which is illustrated in Figure \ref{fig.NNsystem}.  This system was studied in \cite{caldentey-kaplan-weiss:09}, 
\begin{figure}[htb]
\begin{center}
\includegraphics[scale=0.30]{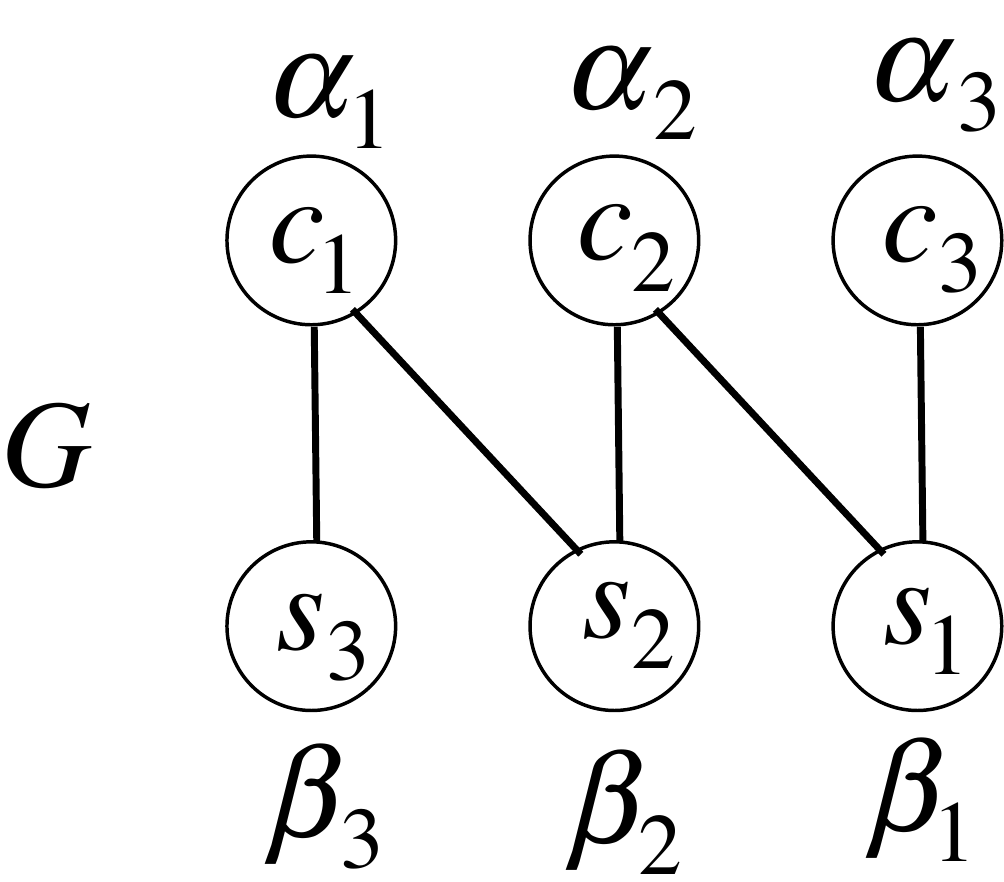}
\end{center}
\caption{Compatibility graph and probabilities of the  ``NN'' system}
\label{fig.NNsystem}
\end{figure}
where ergodicity under complete resource pooling was demonstrated but stationary probabilities could not at that time be obtained.

We can easily calculate stationary probabilities of the pair by pair,  server by server and customer by customer `natural' FCFS processes, using formulae (\ref{eqn.stationaryQs})--(\ref{eqn.stationaryO}), (\ref{eqn.B}).  The conditions for stability, i.e. for complete resource pooling, are:
\[
\beta_1 > \alpha_3,  \qquad  \alpha_1 >  \beta_3, \qquad  \alpha_1 + \beta_1 <1.
\]

Some examples are (we write $\alpha_i,\beta_j$ for $\alpha_{c_i},\beta_{s_j}$):  
\begin{eqnarray*}
&& P(Q^s_N=c_1,c_1,c_1,c_1) = B \beta_1 \left( \frac{\alpha_1}{\beta_2+\beta_3} \right)^4,
\\
&& P(Q^s_N=c_3,c_3,c_3,c_3,c_3) = B (1-\beta_1) \left( \frac{\alpha_3}{\beta_1} \right)^5,
\\
&& P(Q^c_N= s_3,s_3,s_3,s_2,s_3,s_2) = B \alpha_3 \left( \frac{\beta_3}{\alpha_1} \right)^3 
\left( \frac{\beta_2}{\alpha_1+\alpha_2} \right)^2 \frac{\beta_3}{\alpha_1+\alpha_2}, 
\\
&& P\big(O_N = (c_3,c_3,c_2,c_3,c_2,c_3),(s_3,s_3,s_3,s_3,s_3,s_3)\big) = 
B \left( \frac{\alpha_3}{\beta_1} \right)^2 \left( \frac{\alpha_2}{\beta_1+\beta_2} \right)^2  
\left( \frac{\alpha_3}{\beta_1+\beta_2} \right)^2 
\left( \frac{\beta_3}{\alpha_1} \right)^6. 
\end{eqnarray*}
The value of the normalizing constant is:
\[
B = \frac{(\alpha_1-\beta_3)(\beta_1-\alpha_3)(1 -\alpha_1 - \beta_1)}{\alpha_1 \alpha_2 \beta_1 \beta_2}.
\]


\section{Calculation of performance measures}
\label{sec.perfomance}
\subsection{Matching rates}
\label{sec.matchingrates}
Assume ergodicity (complete resource pooling) holds.  
The {\em matching rate} $r_{c_i,s_j}$ is the a.s. limit of the fraction of matches of customer of type $c_i$ with server of type $s_j$, in the complete FCFS  matching of $(s^n,c^m)_{0\le m,n \le N}$, as $N\to\infty$.  
An expression for $r_{c_i,s_j}$ was derived in \cite{adan-weiss:11}.  We include this expression here for completeness and also because of its close similarity to the derivation of the distribution of {\em link lengths} $L_{s_j},\,L_{c_i,s_j}$, in Section \ref{sec.linklengths}.  

Both $r_{c_i,s_j}$ and the distribution of $L_{c_i,s_j}$ are obtained by considering the state of the process $^o Z^s$ which is $\fs=(S_1,\fw_1,S_2,\fw_2,\ldots,S_J,\fw_J)$  (or equivalently, of the process $\Z$ with the addition of $\fw_J$, where $\fw_J$ is the sequence of $c^{\oM+1},\ldots,c^\oN$).  The final expressions include summation over all the permutations $S_1,\ldots,S_J \in \P_J$ of the  servers $s_1,\ldots,s_J$.

For convenience we use the following notations relative to each permutation $S_1,\ldots,S_J$:  
\[
\alpha_{(k)} = \alpha_{U\{S_1,\ldots,S_k\}}, \qquad \beta_{(k)} =
\beta_{\{S_1,\ldots,S_k\}}=\beta_{S_1}+\cdots+\beta_{S_k} .
\]
Note that if $U\{S_1,\ldots,S_k\}=\emptyset$ then $\alpha_{(k)}=0$.  Further,
\[
\phi_k = \frac{\alpha_{U\{S_1,\ldots,S_k\}\cap \{c_i\}}}{\alpha_{U\{S_1,\ldots,S_k\}}}, \qquad
\psi_k = \frac{\alpha_{U\{S_1,\ldots,S_k\}\cap (C(s_j)\backslash\{c_i\})}}{\alpha_{U\{S_1,\ldots,S_k\}}}, \qquad
\chi_k=1-\phi_k-\psi_k ,
\]
where $\phi_k,\psi_k,\chi_k$ express the conditional probability that $s^{N+1}=s_j$ and $c^m\in \fw_k$ form an $(s_j,c_i)$ match, or an $(s_j,c_k),\,c_k\ne c_i$ match, or no match at all, respectively.  By convention $0/0=0$.

The expression for the matching rate is:
\begin{eqnarray}
\label{eqn.matches}
r_{c_i,s_j} &=&  \beta_{s_j} \sum_{(S_1,\ldots,S_J)\in \P_J}  \pi_R( S_1,\ldots,S_J)  \nonumber \\
&& \left(
\sum_{k=1}^{J-1} \phi_k
\frac{\alpha_{(k)}}{\beta_{(k)}-\alpha_{(k)} \chi_{k}}
\prod_{l=1}^{k-1} \frac{\beta_{(l)}-\alpha_{(l)}}{\beta_{(l)}-\alpha_{(l)} \chi_{l}}
+ \frac{\phi_J}{\phi_J+\psi_J}
\prod_{l=1}^{J-1} \frac{\beta_{(l)}-\alpha_{(l)}}{\beta_{(l)}-\alpha_{(l)} \chi_{l}}
\right) .
\end{eqnarray}


\subsection{Link lengths}
\label{sec.linklengths}
Assume ergodicity (equivalently, complete resource pooling) holds, and consider a stationary FCFS matching over $\bZ$.  
For each server $s^n$, if $s^n$ is matched to $c^m$ we let $L(s^n,c^m)=m-n$ denote the link length.  We define the random variable $L_{s_j}$ to have the stationary distribution of link lengths of servers of type $s_j$.  We define the random variable $L_{s_j,c_i}$ to have the stationary distribution of link lengths of matches between servers of type $s_j$ and customers of type $c_i$.  In this section we derive the distributions of 
 $L_{s_j}$ and of $L_{s_j,c_i}$.  They are mixtures of convolutions of some positively signed and some negatively signed geometric random variables.

Consider the system following server by server FCFS matching and exchanging of all servers up to $N$.
The state is $\fs= \ts^\uN,\ldots,c^\oN = S_1,\fw_1,\ldots,S_J,\fw_J$.  Let $s^{N+1}=s_j$.  
$s^{N+1}$ will be matched to one of the elements of $\fs$, since $\fs$ contains customers of all types.  
Say it is matched to $c^m$.  We are interested in $L_{s_j} = m-(N+1)$.  
We are also interested in the special case that $c^m=c_i$.  The conditional random variable, conditional on   $c^m = c_i$ is then $L_{c_i,s_j}=m-(N+1)$.

We introduce the following additional notation.  For the state $\fs$, we let $\sharp c$ and $\sharp \ts$ count respectively the number of unmatched customers and the number of matched exchanged servers in $\fs$.   Also for $c^m$ we let $\sharp c_L$, $\sharp c_R$, $\sharp \ts_L$, $\sharp \ts_R$   count respectively the number of customers and of servers,  to the left (preceding) and to the right (succeeding) of $c^m$ in $\fs$.

\begin{proposition}  
If $s^{N+1}$ is matched to $c^m$, then $L(s^{N+1},c^m)=m-(N+1)$ is equal to the total number of unmatched customers preceding $c^m$ minus the total number of matched exchanged servers following $c^m$, that is:
\begin{equation}
\label{eqn.linkcount}
L(s^{N+1},c^m)=m-(N+1) =\sharp c_L - \sharp \ts_R.
\end{equation}
\end{proposition}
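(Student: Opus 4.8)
My plan is to prove (\ref{eqn.linkcount}) by a double counting of links, anchored at a regeneration point far to the left of the window of the state $\fs$.

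\emph{Step 1: choose an anchor.} Since we work in the stationary regime over $\bZ$ under complete resource pooling, the Loynes construction of Theorem~\ref{thm.co-loynes} produces almost surely infinitely many \emph{regeneration points} to the left of any position, i.e. positions $p_0$ such that the customers and servers strictly below $p_0$ form a perfect FCFS match among themselves with no link crossing level $p_0$. I fix such a $p_0\le\uN$. (In the finite or one-sided case one just takes $p_0$ to be the first index.)

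\emph{Step 2: count the customer line below $m$.} In the exchanged configuration at time $N$, customer-line position $p$ holds $\ts^p$ if $c^p$ has already been matched to a server of index $\le N$, and holds $c^p$ otherwise; position $m$ still holds the unmatched customer $c^m$. Since there is no unmatched customer before position $\uM$ and $p_0\le\uN<\uM\le m$, the unmatched customers of $\fs$ preceding $c^m$ are exactly the unmatched customers at positions in $[p_0,m-1]$, so there are $\sharp c_L$ of them. Hence the matched‑and‑exchanged servers at customer‑line positions in $[p_0,m-1]$ number exactly $(m-p_0+1)-1-\sharp c_L$. By the regeneration property these all come from links $(s^n,c^{m'})$ with $p_0\le n\le N$ and $m'<m$ (the \emph{left links}), and each such link contributes exactly one; thus the number of left links is $m-p_0-\sharp c_L$.

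\emph{Step 3: count the right links and close the loop.} Symmetrically, the matched‑and‑exchanged servers of $\fs$ that \emph{follow} $c^m$, which number $\sharp\ts_R$, are exactly the $\ts^{m'}$ arising from links $(s^n,c^{m'})$ with $p_0\le n\le N$ and $m'>m$ (the \emph{right links}); here one uses that the last matched‑and‑exchanged server sits at position $\oM\le\oN$ so nothing is lost at the right end of the window, and that $c^m$ is matched to $s^{N+1}$, not to any server of index $\le N$, so no link has $m'=m$. Finally, on the \emph{server} line every position in $[p_0,N]$ holds a matched‑and‑exchanged customer $\tc^n$, so the total number of links $(s^n,c^{m'})$ with $p_0\le n\le N$ is exactly $N-p_0+1$; since every such link is either a left link or a right link, $N-p_0+1=(m-p_0-\sharp c_L)+\sharp\ts_R$, which rearranges to $m-(N+1)=\sharp c_L-\sharp\ts_R$, i.e. (\ref{eqn.linkcount}).

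The only point needing care is the bookkeeping at the two ends of the window: checking that $c^m$ really lies inside $[\uN,\oN]$ — it does, because $\fs$ contains an unmatched customer of every type, so the FCFS match $c^m$ of $s^{N+1}$ occurs at a position $\le\oN$, while $m\ge\uM$ since $c^m$ is unmatched — and checking that the left‑ and right‑link counts are not corrupted by links with an endpoint below $p_0$, which is precisely what the regeneration property rules out. Everything else is elementary position counting, so I expect no substantive obstacle.
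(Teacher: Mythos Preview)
Your argument is correct and is essentially the paper's own proof: the paper simply writes $N-\uN+1=\sharp\ts$, $m-\uN=\sharp c_L+\sharp\ts_L$, $\sharp\ts=\sharp\ts_L+\sharp\ts_R$ and subtracts, which is exactly your left/right link count with the anchor taken at $\uN$ rather than at a regeneration point $p_0$. Your explicit use of a regeneration point is the right way to justify the first of those identities in the bi-infinite setting, which the paper leaves implicit.
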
  
\begin{proof}
 Figure \ref{fig.link} describes a link from $s^{N+1}$ to $c^m$.  
 \begin{figure}[htb]
\begin{center}
\includegraphics[scale=0.30]{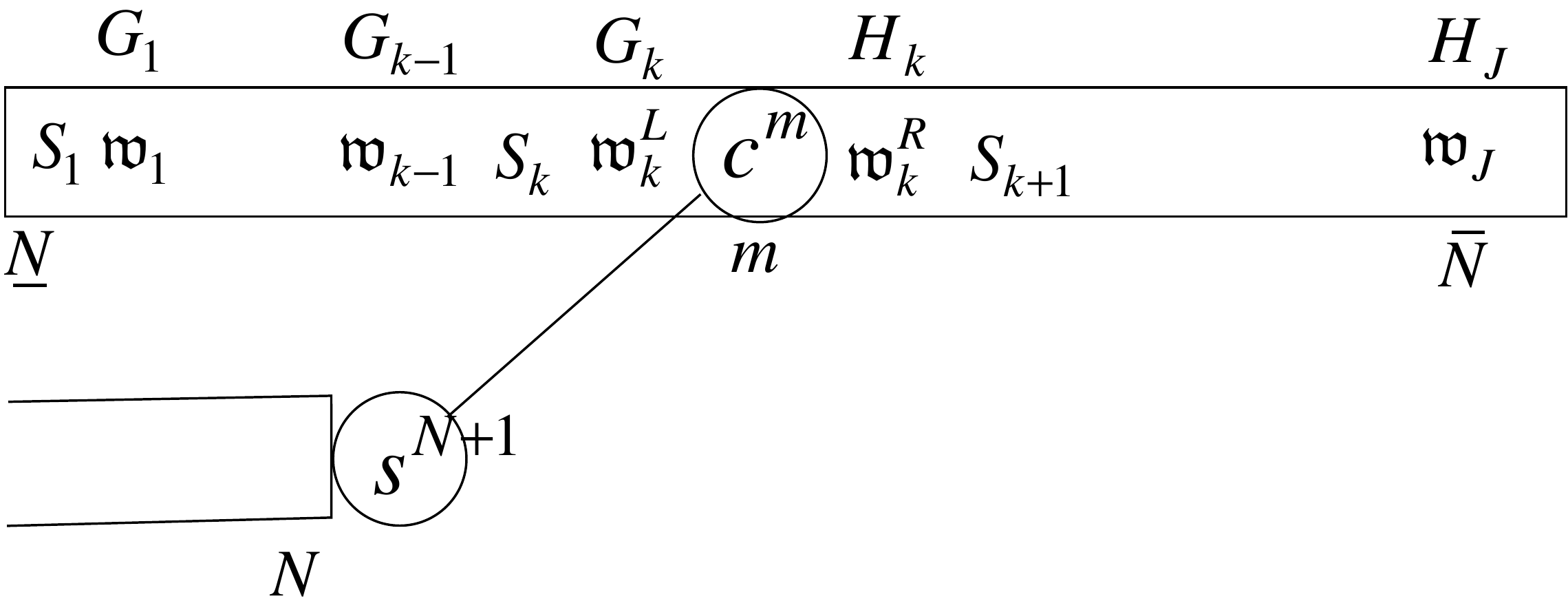}
\end{center}
\caption{The length of the link $L(s^{N+1},c^m)$, from  $s^{N+1}$ to $c^m$}
\label{fig.link}
\end{figure}
 In this figure we see the position of $N$ relative to $\uN$ and $\oN$, and the position $m$.  Conditional on $S_1,\ldots,S_J$ we have the words inbetween these last servers of each type, $\fw_1,\ldots,\fw_J$.  The match occurs in position $m$ in the middle of the word $\fw_\ell$.  $G_k$ denotes the random number of customers in the word $\fw_k$ skipped by $s^{N+1}$ in the search for a match, where $k=1,\ldots,\ell$.  We will see that  $G_k$ are independent and geometrically distributed.  $H_k$  is the random number of matched and exchanged servers, occurring in the word $\fw_k$ after the position $m$, for $k=\ell,\ldots,J$.  We will see that $H_k$ are also independent  and geometrically distributed, and $G_1,\ldots,G_\ell$ and $H_\ell,\ldots,H_J$ are independent.
In this figure,  $\sharp c_L = G_1 + \cdots +G_\ell$ and $\sharp s_R = H_\ell + \cdots +H_J$
 
The lemma follows from:
\begin{eqnarray*}
&& N - \uN +1 = \sharp \ts, \qquad  \sharp \ts = \sharp \ts_L + \sharp \ts_R, \qquad
 m- \uN = \sharp c_L + \sharp \ts_L, \\
&& L(s^{N+1},c^m) = m - (N+1) = m - \uN - \sharp \ts =  \sharp c_L - \sharp \ts_R.
\end{eqnarray*}
\end{proof}

We now need to count the number of matched customers preceding $c^m$, and matched and exchanged servers succeeding $c^m$.   We condition on $s^{N+1}=s_j$, and on $R_N=(S_1,\ldots,S_J)$.  Then $\fw_k$ consists of sequences of skipped customers of type $\U\{S_1,\ldots,S_k\}$, and exchanged servers of type $S_{k+1},\ldots,S_J$, and their number is geometrically distributed (with values $0,1,\ldots)$ as in (\ref{eqn.geometric-c}), (\ref{eqn.geometric-s}): 
\[
\sharp(c^r,\fw_k)\sim Geom_0\left(1 - \frac{\alpha_{(k)}}{\beta_{(k)}}\right), \qquad
\sharp(\ts^h,\fw_k)\sim Geom_0\left(1 - \frac{1 - \beta_{(k)}}{1-\alpha_{(k)}}\right)
\]

\begin{figure}[htb]
\begin{center}
\includegraphics[scale=0.40]{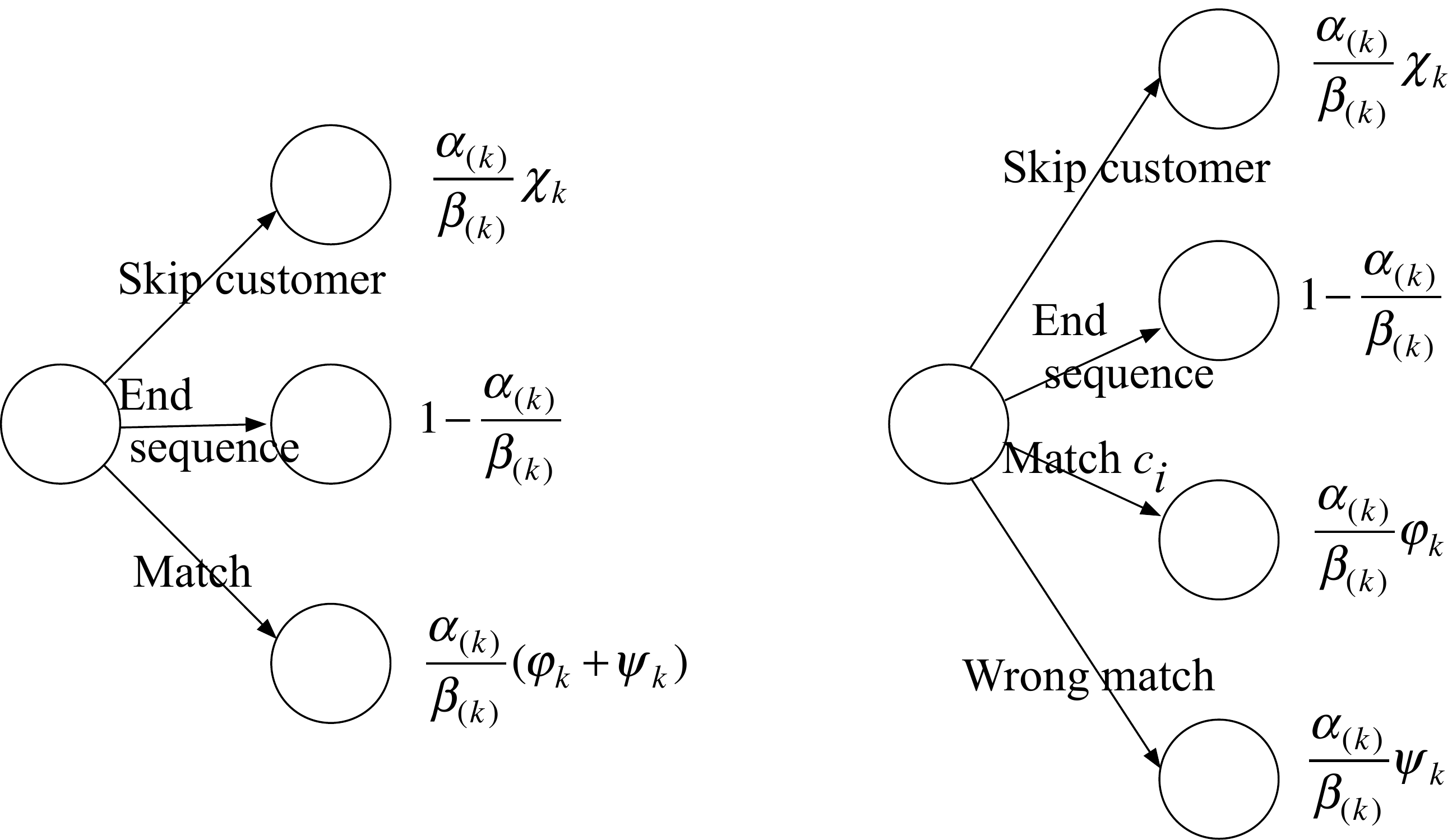}
\end{center}
\caption{Probabilities in searching for a match in $\fw_k$}
\label{fig.trees}
\end{figure}
We now count  $\sharp c_L$, the number of skipped customers prior to a match $c^m$.  Within the word $\fw_k$, following each skipped customer or at the beginning of the word, we can encounter either a next skipped customer, or the end of the word, or a match for $s^{N+1}=s_j$.  When a match occurs it can be with $c^m=c_i$, or it can be with $c^m \in \C(s_j)\backslash c_i$.   The probabilities for these outcomes are given in Figure \ref{fig.trees}.

Consider the event that $s^{N+1}$ finds a match of any type within $\fw_\ell$, after skipping 
$r_1,\ldots,r_{\ell-1},r_\ell$ customers in $\fw_1,\ldots,\fw_\ell$.  The probability of this event is:
\[
\left[ \prod_{k=1}^{\ell-1} \left( \frac{\alpha_{(k)}}{\beta_{(k)}} \chi_k \right)^{r_k} 
\left(1 - \frac{\alpha_{(k)}}{\beta_{(k)}} \right) \right] 
\left( \frac{\alpha_{(\ell)}}{\beta_{(\ell)}} \chi_\ell \right)^{r_\ell} 
\frac{\alpha_{(\ell)}}{\beta_{(\ell)}} (\phi_\ell + \psi_\ell ),
\]
Dividing and multiplying by $1 - \frac{\alpha_{(k)}}{\beta_{(k)}} \chi_k,\; k=1,\ldots,\ell$ and rearranging we obtain:
\[
\left( \frac{ \alpha_{(\ell)} (\phi_\ell + \psi_\ell ) }{\beta_{(\ell)} -\alpha_{(\ell)} \chi_\ell } 
\prod_{k=1}^{\ell-1} \frac{ \beta_{(k)} - \alpha_{(k)} }{ \beta_{(k)} - \alpha_{(k)} \chi_k } \right)
 \prod_{k=1}^\ell \left(\frac{\alpha_{(k)}}{\beta_{(k)}} \chi_k \right)^{r_k} 
\left(1 - \frac{\alpha_{(k)}}{\beta_{(k)}} \chi_k \right),
\]
here the $r_k$ are seen to have a geometric distribution, and the probability multiplying the geometric distribution terms is the probability that the match will occur in the word $\fw_\ell$.

Consider next the same event conditional on the match being of type $c_i$.  We need to divide the probabilities in each step by $1- \frac{\alpha_{(k)}}{\beta_{(k)}} \psi_k$.  The resulting probability is:
\[
\left[ \prod_{k=1}^{\ell-1} \left( \frac{\alpha_{(k)} \chi_k}{\beta_{(k)} - \alpha_{(k)} \psi_k}  \right)^{r_k} 
\left(\frac{\beta_{(k)} - \alpha_{(k)}}{\beta_{(k)} - \alpha_{(k)} \psi_k} \right) \right] 
\left( \frac{\alpha_{(\ell)} \chi_\ell}{\beta_{(\ell)} - \alpha_{(\ell)} \psi_\ell}  \right)^{r_\ell}
\frac{\alpha_{(\ell)} \phi_\ell}{\beta_{(\ell)} - \alpha_{(\ell)} \psi_\ell},
\]
Again, dividing and multiplying by $1 - \frac{\alpha_{(k)} \chi_k }{\beta_{(k)}- \alpha_{(k)} \psi_k }\; k=1,\ldots,\ell$ and rearranging we obtain:
\[
\left( \frac{ \alpha_{(\ell)} \phi_\ell }{ \beta_{(\ell)} - \alpha_{(\ell)}  (\psi_\ell + \chi_\ell) } 
\prod_{k=1}^{\ell-1} \frac{ \beta_{(k)} -  \alpha_{(k)} }{ \beta_{(k)} - \alpha_{(k)} (\psi_k + \chi_k) } 
\right)  \prod_{k=1}^\ell \left(\frac{\alpha_{(k)} \chi_k }{\beta_{(k)}- \alpha_{(k)} \psi_k }  \right)^{r_k} 
\left(1 - \frac{\alpha_{(k)} \chi_k }{\beta_{(k)}- \alpha_{(k)} \psi_k }  \right),
\]
where again we have $r_k$ with geometric distributions, multiplied by the probability that the match with $c_i$ happens in word $\fw_\ell$.

We see that conditional on $S_1,\ldots,S_J$  the value of $\sharp c_L$ in the match of $s^{N+1}=s_j$ with $c^m$,  is a mixture of convolutions of $\ell$ geometric random variables, where the mixture is over the value of $\ell$.  We use $\bigstar _{k=1}^\ell$ to denote convolution of 
 $\ell$ random variables indexed by $k$:
\begin{equation}
\label{eqn.convolution1}
\sharp c_L \sim G_1\star\cdots\star G_\ell = \bigstar _{k=1}^\ell Geom_0 \left (1 - \frac{\alpha_{(k)}}{\beta_{(k)}} \chi_k \right)
\mbox{ w.p. }
\frac{ \alpha_{(\ell)} (\phi_\ell + \psi_\ell ) }{\beta_{(\ell)} -\alpha_{(\ell)} \chi_\ell } 
\prod_{k=1}^{\ell-1} \frac{ \beta_{(k)} - \alpha_{(k)} }{ \beta_{(k)} - \alpha_{(k)} \chi_k } 
\end{equation}
and in the match  of $s^{N+1}=s_j$ with $c^m = c_i $:
\begin{equation}
\label{eqn.convolution2}
\sharp c_L \sim G_1\star\cdots\star G_\ell = \bigstar _{k=1}^\ell Geom_0 \left 
(1 - \frac{\alpha_{(k)}\chi_k}{\beta_{(k)} -\alpha_{(k)}\psi_k } \right)
\mbox{ w.p. }
\frac{ \alpha_{(\ell)} \phi_\ell }{ \beta_{(\ell)} - \alpha_{(\ell)}  (\psi_\ell + \chi_\ell) } 
\prod_{k=1}^{\ell-1} \frac{ \beta_{(k)} \alpha_{(k)} }{ \beta_{(k)} - \alpha_{(k)} (\psi_k + \chi_k) } 
\end{equation}

The number of matched and exchanged servers succeeding the match, $\sharp \ts_R$, is independent of the number of skipped customers prior to the match.  Also, if the match occurs in $\fw_\ell$, the number of matched and exchanged servers after $c^m$ is geometric by the memoryless property of geometric random variables.
Hence, conditional on $S_1,\ldots,S_J$ in the match of $s^{N+1}=s_j$ with $c^m$,  $\sharp s_R$ is a mixture of convolutions of geometric random variables, plus the succeeding servers $S_{\ell+1},\ldots,S_J$
\[
\sharp \ts_R \sim H_\ell\star\cdots\star H_J + (J-\ell) = (J-\ell)+ \bigstar _{k=\ell}^J Geom_0 \left (1 - \frac{1-\beta_{(k)}}{1-\alpha_{(k)}}  \right)
\begin{array}{c} \mbox{with appropriate probabilities}\\ 
\mbox{as in (\ref{eqn.convolution1}), (\ref{eqn.convolution2})}\end{array}
\]
{\bf Note}:  In all these equations, empty sums, empty products, and 0 probabilities occur when necessary.

We summarize these results in terms of  generating functions.
\begin{theorem}
The generating functions of the distributions of $L_{s_j},\,L_{c_i,s_j}$ are:
\begin{eqnarray}
\label{eqn.generating1}
&& E(\sZ^{L_{s_j}}) = \sum_{(S_1,\ldots,S_J)\in \P_J}  \pi_R( S_1,\ldots,S_J) \sum_{\ell = 1}^J 
 \frac{ \alpha_{(\ell)} (\phi_\ell + \psi_\ell ) }{\beta_{(\ell)} -\alpha_{(\ell)} \chi_\ell } 
 \prod_{k=1}^{\ell-1} \frac{ \beta_{(k)} - \alpha_{(k)} }{ \beta_{(k)} - \alpha_{(k)} \chi_k } 
 \nonumber \\
&&\qquad 
\times \prod_{k=1}^\ell \Big( \frac{ \beta_{(k)} - \alpha_{(k)} \chi_k } 
{ \beta_{(k)} - \alpha_{(k)} \chi_k \sZ } \Big)
\times \prod_{k=\ell}^J  \Big( \frac{ \beta_{(k)} - \alpha_{(k)} } 
{ 1-\alpha_{(k)} - (1-\beta_{(k)})  {\sZ}^{-1} } \Big) \times \frac{1}{\sZ^{J-\ell}} 
\end{eqnarray}
\begin{eqnarray}
\label{eqn.generating2}
&& E(\sZ^{L_{s_j,c_i}}) = \sum_{(S_1,\ldots,S_J)\in \P_J}  \pi_R( S_1,\ldots,S_J) \sum_{\ell = 1}^J 
\frac{ \alpha_{(\ell)} \phi_\ell }{ \beta_{(\ell)} - \alpha_{(\ell)}  (\psi_\ell + \chi_\ell) }
\prod_{k=1}^{\ell-1} \frac{ \beta_{(k)} -  \alpha_{(k)} }{ \beta_{(k)} - \alpha_{(k)} (\psi_k + \chi_k) } 
\nonumber \\
&& \qquad 
\times \prod_{k=1}^\ell \Big( \frac{ \beta_{(k)} - \alpha_{(k)}(\psi_k + \chi_k) } 
{ \beta_{(k)} -\alpha_{(k)} \psi_k  - \alpha_{(k)} \chi_k \sZ } \Big)
\times \prod_{k=\ell}^J  \Big( \frac{ \beta_{(k)} - \alpha_{(k)} } 
{ 1-\alpha_{(k)} - (1-\beta_{(k)})  {\sZ}^{-1} } \Big) \times \frac{1}{\sZ^{J-\ell}} 
\end{eqnarray}
\end{theorem}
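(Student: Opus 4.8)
The plan is to obtain the two generating functions by packaging the mixture-of-convolutions descriptions of $\sharp c_L$ and $\sharp \ts_R$ derived just above into generating functions, the link being the identity $L(s^{N+1},c^m)=\sharp c_L-\sharp \ts_R$ of \eqref{eqn.linkcount}.

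I would work under the stationary version of the augmented server-by-server chain $\o Z^s$, which exists and has the product-form law of Corollary~\ref{thm.stationary2} because complete resource pooling is assumed. Since the servers are i.i.d., conditioning on $s^{N+1}=s_j$ leaves the law of the state $\o Z^s_N=\fs=(S_1,\fw_1,\ldots,S_J,\fw_J)$ unchanged, so the link length of a generic type-$s_j$ server is distributed as $\sharp c_L-\sharp \ts_R$ for this conditioned state, and $L_{c_i,s_j}$ is the same variable conditioned further on the matched customer being of type $c_i$. I would then condition on the server permutation $R_N=(S_1,\ldots,S_J)$, whose marginal law is the probability vector $\pi_R$ of \eqref{eqn.stationaryR} (it sums to one by the formula for $B^s$), and on the index $\ell\in\{1,\ldots,J\}$ of the word $\fw_\ell$ in which $s^{N+1}$ finds its match --- this $\ell$ is well defined because $\fs$ contains an unmatched customer of every type.

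Next I would invoke the distributional facts established immediately before the statement. Conditionally on $(R_N,\ell)$, the quantity $\sharp c_L=G_1\star\cdots\star G_\ell$ is a sum of independent geometrics, with parameters and with mixing weight $P(\text{the match lies in }\fw_\ell\mid R_N)$ as in \eqref{eqn.convolution1} for $L_{s_j}$ and as in \eqref{eqn.convolution2} for $L_{c_i,s_j}$, while $\sharp \ts_R=(J-\ell)+H_\ell\star\cdots\star H_J$ with the $H_k$ independent and $H_k\sim Geom_0\!\big(1-\tfrac{1-\beta_{(k)}}{1-\alpha_{(k)}}\big)$, these latter parameters being unaffected by the extra conditioning on $c^m=c_i$. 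Moreover $\sharp c_L$ and $\sharp \ts_R$ are independent given $(R_N,\ell)$: within $\fw_\ell$ the items following the match position are i.i.d.\ and hence independent of those preceding it, and the words $\fw_{\ell+1},\ldots,\fw_J$ are independent of $\fw_1,\ldots,\fw_\ell$ given $R_N$. With $E(\sZ^{X})=(1-q)/(1-q\sZ)$ and $E(\sZ^{-X})=(1-q)/(1-q\sZ^{-1})$ for $X\sim Geom_0(1-q)$, the conditional generating function factorizes as
\[
E\big(\sZ^{L_{s_j}}\,\big|\,R_N,\ell\big)=\Big(\prod_{k=1}^{\ell}E(\sZ^{G_k})\Big)\,\sZ^{-(J-\ell)}\,\Big(\prod_{k=\ell}^{J}E(\sZ^{-H_k})\Big).
\]
Inserting the parameters of \eqref{eqn.convolution1} makes the $k$-th $\sharp c_L$-factor equal to $(\beta_{(k)}-\alpha_{(k)}\chi_k)/(\beta_{(k)}-\alpha_{(k)}\chi_k\sZ)$ and the $k$-th $\sharp \ts_R$-factor equal to $(\beta_{(k)}-\alpha_{(k)})/(1-\alpha_{(k)}-(1-\beta_{(k)})\sZ^{-1})$; averaging over $\ell$ against the mixing weight of \eqref{eqn.convolution1} and then over $R_N$ against $\pi_R$ gives \eqref{eqn.generating1}. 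Carrying out the same computation with the $c_i$-adapted parameters and mixing weight of \eqref{eqn.convolution2}, the $\sharp \ts_R$-factor staying unchanged, gives \eqref{eqn.generating2}.

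The one place calling for care is the conditional independence of $\sharp c_L$ and $\sharp \ts_R$ together with the precise effect of conditioning on $c^m=c_i$ on the $\sharp c_L$-mixture; this is exactly what the two probability computations preceding the statement establish (the reweighting of each scanning step by $1-\tfrac{\alpha_{(k)}}{\beta_{(k)}}\psi_k$), and I would cite them directly. The remainder --- tracking empty sums and products, the degenerate cases ($\U\{S_1,\ldots,S_k\}=\emptyset$ giving $\alpha_{(k)}=0$, the boundary index $\ell=J$ with $\alpha_{(J)}=\beta_{(J)}=1$, and the convention $0/0=0$), and the signs introduced by the $-\sharp \ts_R$ term --- is routine.
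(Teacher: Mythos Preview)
Your proposal is correct and follows exactly the paper's approach: the paper presents the theorem as a direct summary of the mixture-of-convolutions computations preceding it, using \eqref{eqn.linkcount} together with the conditional independence of $\sharp c_L$ and $\sharp \ts_R$ given $(R_N,\ell)$, and then converting the geometric laws of the $G_k$ and $H_k$ from \eqref{eqn.convolution1}, \eqref{eqn.convolution2} into generating-function factors before averaging over $\ell$ and over the permutation via $\pi_R$. Your write-up simply makes explicit the generating-function bookkeeping that the paper leaves implicit in the phrase ``We summarize these results in terms of generating functions.''
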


\appendix


\section{Appendix: Completion of the proof of Theorem \ref{thm.stationary}}
\label{sec.appendix}
\begin{proof}[Proof of part (ii) of Theorem \ref{thm.stationary}]
The proof is similar to the proof of part (i) of the theorem.  
We need to verify that the proposed $\pi_D,\,\pi_E$, similar to (\ref{eqn.checking1}),  satisfy:
\begin{eqnarray*}
\label{eqn.checking2}
&& \pi_{D}(\fz,\fy) P\big(D_{N+1}=(\fz',\fy')  \,|\,D_N=(\fz,\fy)\big) = 
\pi_{D}(\fz',\fy') P\big(D_n=(\fz,\fy)  \,|\,D_{N+1}=(\fz',\fy')\big) \\
&& \qquad = \pi_{E}(\overleftarrow{\fy'},\overleftarrow{\fz'}) 
P\big(E_{M+1} =(\overleftarrow{\fy},\overleftarrow{\fz}) \,|\, E_M =(\overleftarrow{\fy'},\overleftarrow{\fz'}\big)).
\end{eqnarray*}

There are 10 types of transitions from $D_N$ to $D_{N+1}$ and from $E_M$ to $E_{M+1}$, and some special cases when the matching is perfect.  We verify (\ref{eqn.checking2}) for a few of them, the verification for the remaining types is similar.

$(i)$ In the transition from $D_N$ to $D_{N+1}$, customer $c^{N+1}=c_i$ is matched and exchanged with  some $y^k=s_h$  where $k>1$, while server  $s^{N+1}=s_j$ remains unmatched.  This happens if  $c_i \in \overline{\C}\{y^1,\ldots,y^{k-1}\}\cap \C(y^k)$, and $s_j \not\in \S\{z^1,\ldots,z^L\}$.   The transition probability is  $\alpha_{c_i} \beta_{s_j}$.  In this transition the length of $y$ and $z$  is increased by 1.  
The transition is 
\[
D_N = \left\{ \begin{array}{r} z^1,\ldots,z^L\\ y^1,\ldots,y^k=s_h,\ldots,y^K \end{array}\right\} \to 
 D_{N+1}=  \left\{ \begin{array}{r} z^1,\ldots,z^L,s_h \\ y^1,\ldots,y^k=c_i,\ldots,y^K,s_j \end{array}\right\}. 
\]

In the reversed transition from $\overleftarrow{E}_{N+1}$ to $\overleftarrow{E}_N$, we see: 
\[
\overleftarrow{E}_{N+1} =  \left\{ \begin{array}{r} z^1,\ldots,z^L,s_h \\ y^1,\ldots,y^k=c_i,\ldots,y^K,s_j \end{array}\right\} \to  
 \overleftarrow{E}_N \left\{ \begin{array}{r} z^1,\ldots,z^L\\ y^1,\ldots,y^k=s_h,\ldots,y^K \end{array}\right\}, 
\]
which is the same as the transition
\[
E_M =  \left\{ \begin{array}{l} s_j,y^K,\ldots,y^k=c_i,\ldots,y^1 \\ z_h,z^L,\ldots,z^1  \end{array}\right\} \to  
E_{M+1} \left\{ \begin{array}{l} y^K,\ldots,y^k=s_h,\ldots,y^1 \\ z^L,\ldots,z^1 \end{array}\right\}. 
\]
The length of $y$ and $z$ is reduced by 1, and this transition is deterministic,  it happens with probability 1.

Verification of  (\ref{eqn.checking2}) follows from:
\[
\pi_D(D_{N+1}) = \pi_D(D_N) \alpha_{c_i} \beta_{s_j}, \quad   P(D_{N+1}|D_N) = \alpha_{c_i} \beta_{s_j},   
\quad P(\overleftarrow{E}_N | \overleftarrow{E}_{N+1}) =1.
\]

$(ii)$   In the transition from $D_N$ to $D_{N+1}$, customer $c^{N+1}=c_i$ is matched and exchanged with   $y^1=s_h$, and $s^{N+1}=s_j$ is matched and exchanged with $z^1=c_t$.   The resulting transition is either:
\[
D_N = \left\{ \begin{array}{r} z^1=c_t,z^2,\ldots,z^L\\ y^1=s_h,y^2,\ldots,y^K \end{array}\right\} \to 
 D_{N+1}=  \left\{ \begin{array}{r} z^l,\ldots,z^L,s_h \\ y^k,\ldots,y^K,c_t \end{array}\right\}, 
\]
or
\[
D_N = \left\{ \begin{array}{r} z^1=c_t,z^2,\ldots,z^L\\ y^1=s_h,y^2,\ldots,y^K \end{array}\right\} \to 
 D_{N+1}=  \emptyset,
\]
and the corresponding reversed transition is in the former case
\[
E_M =\left\{ \begin{array}{l}  c_t,y^K,\ldots,y^k \\ s_h,z^L,\ldots,z^l \end{array}\right\}  \to
E_{M+1} =  \left\{ \begin{array}{l}   y^K,\ldots,y^2,y^1=s_h \\ z^L,\ldots,z^2,z^1=c_t  \end{array}\right\},  
\]
and in the latter case
\[
E_M =\emptyset   \to
E_{M+1} =  \left\{ \begin{array}{l}   y^K,\ldots,y^2,y^1=s_h \\ z^L,\ldots,z^2,z^1=c_t  \end{array}\right\}.  
\]
Verification of  (\ref{eqn.checking2}) follows in the former case from:
\begin{eqnarray*}
&\pi_D(D_{N+1}) = 
\pi_D(D_N) \Big{/} \beta_{z^2}\cdots \beta_{z^{l-1}}  \alpha_{y^2}\cdots \alpha_{y^{k-1}}, 
\quad    P(D_{N+1}|D_N) = \alpha_{\C(s_h)} \beta_{\S(c_t)},   &\\
& P(\overleftarrow{E}_N | \overleftarrow{E}_{N+1}) =   
\beta_{z^2}\cdots \beta_{z^{l-1}}  \beta_{\S(c_t)}  \alpha_{y^2}\cdots \alpha_{y^{k-1}} \alpha_{\C(s_h)},  &
\end{eqnarray*}
and in the latter case from:
\begin{eqnarray*}
&\pi_D(D_{N+1}) = 
\pi_D(D_N) \Big{/} \alpha_{c_t} \beta_{z^2}\cdots \beta_{z^L}  \beta_{s_h} \alpha_{y^2}\cdots \alpha_{y^K}, 
\quad    P(D_{N+1}|D_N) = \alpha_{\C(s_h)} \beta_{\S(c_t)},   &\\
& P(\overleftarrow{E}_N | \overleftarrow{E}_{N+1}) =   
\alpha_{c_t} \beta_{z^2}\cdots \beta_{z^L}  \beta_{\S(c_t)} 
\beta_{s_h} \alpha_{y^2}\cdots \alpha_{y^K} \alpha_{\C(s_h)}.  &
\end{eqnarray*}

$(iii)$  Transition from empty state to empty state, $D_N=D_{N+1}=E_M=E_{M+1}=\emptyset$:
\[
\pi(D_N)=\pi(D_{N+1}), \quad 
P(D_{N+1}|D_N)=P(E_{M+1}|E_M)=\sum_{(c_i,s_j)\in \E} \alpha_{c_i} \beta_{s_j}.
\]

$(iv)$  Transition from  empty state to state 
$D_{N+1}=\left( \begin{array}{l} c_i \\ s_j \end{array} \right)$.  The reversed transition is from
 $\overleftarrow{E}_{M+1}=\left( \begin{array}{l} c_i \\ s_j \end{array} \right)$ 
 to  $\overleftarrow{E}_M = \emptyset$, which is the same transition as 
 $E_M=\left( \begin{array}{l} s_j \\ c_i \end{array} \right)$ to $E_{M+1}=\emptyset$.  
 Recall that if $E_M=\left( \begin{array}{l} s_j \\ c_i \end{array} \right)$ then all customers and servers preceding position $M$ have been matched and exchanged, and in particular, $s_j = \ts^{M+1}$ and $c_i=\tc^{M+1}$ are matched and exchanged with the original $c^{M+1},s^{M+1}$ and so the transition from $E_M=\left( \begin{array}{l} s_j \\ c_i \end{array} \right)$ to $E_{M+1}=\emptyset$ is deterministic, and has probability 1.  To verify   (\ref{eqn.checking2}):
\[
\pi_D(D_N)=\pi_D(D_{N+1})\Big{/} \alpha_{c_i} \beta_{s_j}, \quad 
P(D_{N+1}|D_N)=\alpha_{c_i} \beta_{s_j}, \quad 
P(E_{M+1}|E_M)=1.
\]
\end{proof}

\end{document}